\documentclass[12pt]{amsart}

\usepackage[margin = 1in]{geometry}

\usepackage{tikz}
\usepackage{tikz-cd}
\usepackage{url, hyperref}
\usepackage{float}

\tikzset{->-/.style={decoration={
  markings,
  mark=at position .45 with {\arrow{>}}},postaction={decorate}}}
  
\usetikzlibrary{shapes.arrows}
\usetikzlibrary{decorations.pathreplacing}
\usepackage[all]{xy}

\tikzset{->-/.style={decoration={
  markings,
  mark=at position .45 with {\arrow{>}}},postaction={decorate}}}

\usepackage{amsmath}
\usepackage{amssymb}
\usepackage{enumitem}
\usepackage{graphicx}
\usepackage{mathdots}
\usepackage{color}
\usepackage{diagbox}
\usepackage{array, makecell}
\usepackage{rotating}
\usepackage{amsmath}
\usepackage{tikz-cd}

\newcount\colveccount
\newcommand*\colvec[1]{
        \global\colveccount#1
        \begin{pmatrix}
        \colvecnext
}
\def\colvecnext#1{
        #1
        \global\advance\colveccount-1
        \ifnum\colveccount>0
                \\
                \expandafter\colvecnext
        \else
                \end{pmatrix}
        \fi
}

\usepackage{amsthm}

\theoremstyle{definition}
\newtheorem{definition}{Definition}[subsection]
\newtheorem{theorem}[definition]{Theorem}
\newtheorem{example}[definition]{Example}
\newtheorem{proposition}[definition]{Proposition}
\newtheorem{situation}[definition]{Situation}

\newtheorem{corollary}[definition]{Corollary}
\newtheorem{lemma}[definition]{Lemma}

\newtheorem{remark}[definition]{Remark}

\newtheorem{notation}[definition]{Notation}

\def\bK{\mathbb{K}}

\def\bP{\mathbb{P}}

\def\cF{\mathcal{F}}

\def\cL{\mathcal{L}}
\def\cM{\mathcal{M}}
\def\cN{\mathcal{N}}
\def\cO{\mathcal{O}}

\def\barM{\overline{\mathcal{M}}}

\DeclareMathOperator{\Aut}{Aut}
\DeclareMathOperator{\Bl}{Bl}
\DeclareMathOperator{\charac}{char}

\DeclareMathOperator{\ev}{ev}
\def\Ex{\mathrm{Ex}}

\DeclareMathOperator{\Gr}{Gr}

\DeclareMathOperator{\Pic}{Pic}

\DeclareMathOperator{\Supp}{Supp}

\newcommand{\uw}{\underline{W}}
\newcommand{\dtilde}{\widetilde{d}}
\newcommand{\rtilde}{\widetilde{r}}
\newcommand{\ws}{\widetilde{s}}

\title{Brill-Noether existence with secants}

\author{Alessio Cela}
\address{ University of Cambridge, Department of pure mathematics and mathematical statistics
\hfill \newline\texttt{}
 \indent Centre for Mathematical Sciences, Wilberforce Road Cambridge, UK} \email{{\tt ac2758@cam.ac.uk}}

 \author{Carl Lian}
\address{Washington University in St. Louis, Department of Mathematics, 1 Brookings Drive
\hfill \newline\texttt{}
 \indent  St. Louis, MO 63130} \email{{\tt clian@wustl.edu}}

\date{\today}

\usepackage{graphicx}

\begin{document}

\maketitle

\begin{abstract}
We completely characterize when a general curve of genus $g$ admits a non-degenerate degree $d$ map to projective space $\bP^r$, that is $k$-secant along a linear space $\bP^s \subset \bP^r$ and deforms in a smooth family of expected dimension. This gives an optimal improvement of a theorem of Farkas, who gave necessary conditions, and proved sufficiency under additional numerical assumptions. Our main theorem shows that the natural necessary conditions are in fact sufficient, yielding a generalization of the classical Brill-Noether existence theorem. The proof relies on deformation theory of stable maps, and is valid in arbitrary characteristic.
\end{abstract}

\setcounter{tocdepth}{1}

\section{Introduction}\label{sec:intro}

Let $C$ be a smooth, proper, connected curve of genus $g$ defined over an algebraically closed field $\bK$. Then, $C$ is projective. However, $C$ can be realized as a projective subvariety of $\bP^r$, or more generally mapped to $\bP^r$, in many ways. It is natural to ask, for which values of $r,d$, there exists a map $f\colon C \to \bP^r$ of degree $d$. When $C$ is general in moduli, a complete answer is given by the classical Brill-Noether existence theorem \cite{gh}.

\begin{theorem}\label{thm:BN_existence}
Let $C$ be a general curve of genus $g$, by which we mean a general point $C \in \cM_g$. Then, there exists a non-degenerate map $f\colon C \to \bP^r$ of degree $d$ if and only if the \emph{Brill-Noether number}
\[
\rho(g,r,d) := g - (r+1)(g-d+r)
\]
is non-negative.
\end{theorem}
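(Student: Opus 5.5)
Both directions will be handled through the Brill--Noether locus $G^r_d(C)$ of linear series, reducing non-degenerate maps to linear series. A non-degenerate map $f\colon C\to\bP^r$ of degree $d$ gives a $g^r_d$, namely the pullback $L=f^*\cO(1)$ together with an $(r+1)$-dimensional subspace $V\subset H^0(L)$ (after removing base points the degree can only drop, and adding base points raises it). Conversely, a base-point-free $g^r_d$ gives such a map. So the statement becomes: for general $C$, there is a $g^r_d$ whose associated map is non-degenerate, in the sense that the $r+1$ sections are linearly independent, if and only if $\rho\ge 0$.

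\emph{Necessity.} Assume $\rho(g,r,d)<0$. I will show that a general $C$ carries no $g^r_d$, using degeneration and limit linear series (Eisenbud--Harris). Degenerate $C$ to a flag curve: a rational spine with $g$ elliptic tails attached at general points. Equivalently, use a chain of $g$ elliptic curves, with attachment points differing by non-torsion classes.
\begin{enumerate}
\item A $g^r_d$ on the general fiber specializes to a refined limit linear series on this compact-type curve.
\item On each elliptic component $E_i$ with nodes $p_i,q_i$, the vanishing sequences satisfy $\sum_j \bigl(a_j(p_i)+a_j(q_i)\bigr)\le r(r+1)/2\cdot 2+\dots$, and equality is possible in at most one index, since $p_i-q_i$ is not torsion. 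This gives a ramification deficit of at least $r$ per elliptic component.
\item On $\bP^1$, Plücker's formula bounds the total ramification by $(r+1)(d-r)$.
\item Summing these inequalities gives $g\cdot r\le(r+1)(d-r)+\ldots$, which is exactly the condition $\rho\ge0$.
\end{enumerate}
The bookkeeping will use the additivity of $\rho$ across nodes for refined limit series, so that the adjusted Brill--Noether numbers of the components, all of which must be non-negative, sum to at most $\rho$.

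\emph{Sufficiency.} Assume $\rho\ge0$. Here I will use the determinantal description: $G^r_d(C)$ is a degeneracy locus over $\Pic^d(C)$. Its expected codimension is $(r+1)(g-d+r)$, so by the Kempf/Kleiman--Laksov argument every component of $W^r_d(C)$ has dimension at least $\rho$, and the locus is non-empty as soon as the relevant Chern class (the Castelnuovo number times $\theta^{g-\rho}$) is nonzero. The Castelnuovo count is
\[
g!\prod_{i=0}^{r}\frac{i!}{(g-d+r+i)!}>0.
\]
Since $\theta^{g-\rho}$ is non-zero on an abelian variety of dimension $g\ge g-\rho$, the locus is non-empty.

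It remains to produce a complete $g^r_d$ with exactly $r+1$ sections that is base-point-free. For general $C$, Gieseker--Petri gives $\dim W^{r+1}_d=\rho(g,r+1,d)<\rho(g,r,d)$ whenever the latter is non-negative, which excludes $h^0\ge r+2$ generically. Similarly $\dim W^r_{d-1}<\dim W^r_d$, so the general member of a component of $W^r_d$ is base-point-free. Alternatively, if base points do appear, I can simply keep the series with its base points: the map from the moving part is still non-degenerate, and its degree is at most $d$. To recover degree exactly $d$, I compare with series in lower degree and use that $\rho$ increases with $d$.

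\emph{Main obstacle.} The hardest step is the reduction from existence of a linear series to existence of a map of degree exactly $d$, which requires the dimension estimates for $W^{r+1}_d$ and $W^r_{d-1}$. These use the Brill--Noether dimension theorem for general curves, which is itself the degeneration argument from the necessity part. To get the dimension statement and not just emptiness, I would prove it once in the upper-semicontinuity form and then apply it to both directions.
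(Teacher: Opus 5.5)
Your argument is, in outline, the classical one, and it matches what the paper recalls without reproof in \S\ref{sec:BN}. Necessity is the emptiness half of the Brill--Noether theorem. Non-emptiness of $G^r_d(C)$ for $\rho\ge 0$ comes from the Kempf/Kleiman--Laksov Porteous computation. Base-point-freeness of a general point comes from the dimension theorem for general $C$.

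The paper itself only cites Theorem 1.1, but its own method, specialized to $k=0$ and target $\bP^r$, gives a genuinely different proof of sufficiency. Start from any non-degenerate rational curve of degree $d\ge r$, where $H^1(f^*T_{\bP^r})=0$ is automatic. Then repeatedly glue on either a line through two general points (step (L), lowering $\rho$ by $1$) or a rational normal curve through $r+2$ general points (step $(T_0)$, fixing $\rho$ and lowering $g$ by $r+1$). The twisted bundles $\cO(-1)^{\oplus r-1}\oplus\cO$ and $\cO(-1)^{\oplus r}$ have no $H^1$, so Lemma \ref{lem:attach_and_smooth} smooths the result.

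That route is characteristic-free, avoids both the Porteous class and the dimension theorem, and yields the stronger conclusion $H^1(C,f^*T_{\bP^r})=0$. Yours gives non-emptiness on \emph{every} curve. However, it pays for base-point-freeness with the full dimension theorem, which needs more than your emptiness sketch (e.g.\ a dimension count on the limit linear series space plus upper semicontinuity).

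Several details need repair, none fatal.

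First, drop the insistence on a \emph{complete} $g^r_d$ with exactly $r+1$ sections. It is unnecessary, and impossible for $d\ge g+r+1$, where every $L$ has $h^0(L)\ge d-g+1\ge r+2$. In that range your comparison $\dim W^r_{d-1}<\dim W^r_d$ also fails, since both equal $g$. Even in the special range, the needed comparison is $\dim W^r_{d-1}+1<\rho$, where the $+1$ accounts for the moving base point. The clean fix is to work in $G^r_d(C)$, which for general $C$ is pure of dimension $\rho$ in every range. Series with a base point form the image of $G^r_{d-1}(C)\times C$, of dimension at most $\rho(g,r,d-1)+1=\rho-r<\rho$.

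Second, the displayed inequality in your step 2 is garbled. The correct statement is $a_j(p_i)+a_{r-j}(q_i)\le d$, with equality for at most one $j$. Equivalently, each elliptic aspect has non-negative adjusted Brill--Noether number, and additivity gives $\rho\ge 0$.

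Third, the flag-curve variant uses Pl\"ucker as an upper bound on ramification on $\bP^1$. This fails in positive characteristic: the pencil $\langle x^p,1\rangle$ is ramified at every point. Since the paper works over an arbitrary algebraically closed field, only the elliptic chain works there.

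Finally, delete the ``keep the base points'' alternative. A map of degree exactly $d$ requires a base-point-free $g^r_d$, and adding base points to a lower-degree series never produces one.
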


A map $f$ is \emph{non-degenerate} if its image is not contained in any hyperplane. Restricting to non-degenerate maps is natural, because a degenerate map may instead be regarded as a map to a lower-dimensional projective space. The Gieseker-Petri theorem \cite{gieseker,eh_petri} asserts furthermore that the moduli space of non-degenerate maps $f\colon C \to \bP^r$ is smooth, of dimension
\[
h^0(C,f^*T_{\bP^r}) = \chi(C,f^*T_{\bP^r}) = \rho(g,r,d)+((r+1)^2-1).
\]
Equivalently, $H^1(C,f^*T_{\bP^r})=0$. 

The quantity $(r+1)^2-1$ is the dimension of the automorphism group of $\bP^r$; note that any non-degenerate map can be post-composed with any such automorphism, so necessarily moves in a family of dimension at least $(r+1)^2-1$. The quantity $\rho(g,r,d)$ can therefore be regarded as the number of moduli for non-degenerate maps up to projective linear transformations; such is the data of a (base-point-free) linear series on $C$. We refer to \cite[IV-V]{acgh} for an introduction to Brill-Noether theory, and \cite{vogt} for a survey of recent developments. We review the basic notions we need in \S \ref{sec:BN}.

It is natural to consider existence theorems for maps from general curves to other target varieties. Among many other applications, the divisorial loci of curves possessing maps to $\bP^r$ with $\rho(g,r,d)=-1$ were used in \cite{eh_kodaira} to prove that $\cM_g$ is of general type for large $g$. Analogous results for maps to quadrics and Grassmannians were used to similar effect more recently in \cite{fjp1,fjp2}. 

In this paper, we study existence of maps from a general curve $C$ to the blow-up $\Bl_{\bP^s}\bP^r$ of $\bP^r$ along a linear space of dimension $s$. The geometry of the moduli space of maps $f\colon C \to \Bl_{\bP^s}(\bP^r)$ has been studied in different guises by various authors; see, e.g. \cite{cm,farkas,cotterill,chz,ungureanu,farkas2,cl2,lsakran,cl_complete,cl_interpolation}, as well as \cite[Chapter VIII, \S3-4]{acgh} for closely related enumerative problems. The variety $\Bl_{\bP^s}\bP^r$ an example of a toric variety; we have previously raised the question of whether the classical Brill-Noether Theorem can be extended to the setting of maps to toric varieties in \cite{CL-surfaces}, giving an answer in dimension 2.

A map $f\colon C \to \Bl_{\bP^s}\bP^r$ depends on two numerical invariants
\[
(d,k) := \left(\deg(f^*\cO_{\bP^r}(1)), \deg(f^*\cO(E))\right),
\]
where $E \subset \Bl_{\bP^s}\bP^r$ is the exceptional divisor. We will assume that all maps have image not contained in $E$, so that $d \geq k \geq 0$. The pair $(d,k)$ is referred to as the \emph{degree} of $f$. A map of degree $(d,k)$ is equivalently a map $f\colon C \to \bP^r$ for which the blow-up center $\bP^s \subset \bP^r$ is a $k$-secant plane.

\begin{definition}\label{def:degenerate_nice}
Let $C$ be a smooth curve. A map $f\colon C \to \Bl_{\bP^s}\bP^r$ is \emph{non-degenerate} if it is so after post-composing with both the blow-up morphism $b\colon \Bl_{\bP^s}\bP^r \to \bP^r$ and the projection $\pi\colon \Bl_{\bP^s}\bP^r \to \bP^{r-s-1}$.

We say that $f$ is \emph{nice} if, in addition, $H^1(C,f^*T_{\Bl_{\bP^s}\bP^r})=0$.
\end{definition}

\begin{remark}\label{rem:degenerate}
In fact, if \(b\circ f\) is non-degenerate, then \(\pi\circ f\) is automatically non-degenerate as well. However, we have included both requirements in the definition of non-degeneracy of \(f\) for emphasis.
\end{remark}

Our main theorem is a complete characterization for existence of nice maps.

\begin{theorem}\label{thm:main}
Let $C$ be a general curve of genus $g$. Let $r,s$ be non-negative integers with $s \leq r-2$. Then, a nice map $f\colon C \to \Bl_{\bP^s}\bP^r$ of degree $(d,k)$ exists if and only if the following three inequalities hold.
\begin{enumerate}
\item[(1)] $\rho(g,r,d) := g-(r+1)(g-d+r)\ge0$,
\item[(2)] $\rho_{\bullet}(g,d,k;r,s) := \rho(g,r,d)+(r-s)(s+1)-(r-s-1)k\ge0$, and
\item[(3)] $\rho'(g,d,k;r,s) := \rho(g,r-s-1,d-k)\ge0$.
\end{enumerate}
\end{theorem}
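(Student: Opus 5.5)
We prove necessity and sufficiency separately; necessity is formal and sufficiency is where the work lies.

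\emph{Necessity.} Let $f$ be nice of degree $(d,k)$. Since $b\circ f$ is non-degenerate of degree $d$, Theorem~\ref{thm:BN_existence} gives (1). Likewise $\pi\circ f\colon C\to\bP^{r-s-1}$ is non-degenerate of degree $d-k$ (the pullback of the hyperplane class of $\bP^{r-s-1}$ is $H-E$), so Theorem~\ref{thm:BN_existence} gives (3). For (2), we use $H^1(C,f^*T)=0$ with $T=T_{\Bl_{\bP^s}\bP^r}$. Then the space of maps near $f$ is smooth of dimension $\chi(f^*T)$. We compute $c_1(T)=(r+1)H-(r-s-1)E$, so
\[
\chi(f^*T)=(r+1)d-(r-s-1)k+r(1-g).
\]
The automorphism group of the pair $(\bP^r,\bP^s)$ acts on this space, and it acts with finite stabilizers on non-degenerate maps. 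This group has dimension $(r+1)^2-1-(s+1)(r-s)$, so $\chi(f^*T)-\dim\Aut\ge0$, which rearranges to $\rho_\bullet\ge0$. Equivalently, $\rho_\bullet$ is the expected dimension of the locus of linear series $g^r_d$ having a $k$-secant $(s)$-plane, so nonemptiness together with smoothness forces it to be non-negative.

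\emph{Sufficiency.} We argue by degeneration and induction on $(g,d,k)$, using deformation theory of stable maps to $X=\Bl_{\bP^s}\bP^r$. A nodal map $f\colon C_0\to X$ with $H^1(C_0,f^*T_X)=0$, and with the right non-degeneracy, smooths to a nice map from a general smooth curve. The reason is that the forgetful map to $\overline{\cM}_g$ is then smooth of the expected relative dimension at $f$, and dominance follows from unobstructedness plus a dimension count. The plan is to build such $f$ on a comb: a general curve of lower genus, or a rational spine, with elliptic tails or rational components attached. Each of the following modifications changes $(g,d,k)$ in a controlled way:
\begin{itemize}
\item attaching a line meeting $E$ once raises $(d,k)$ by $(1,1)$;
\item attaching a line disjoint from $\bP^s$ raises $d$ by $1$;
\item attaching an elliptic tail raises $g$ by $1$.
\end{itemize}
For each step, $H^1$-vanishing on the glued curve follows from the vanishing on each component, provided the normal directions at the nodes are general enough.

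The main obstacle will be the combinatorics: we must show that every triple with (1)--(3) can be reached from base cases while staying inside the region where all three inequalities hold. The natural base cases are the extremal ones. When $k=0$, $\rho_\bullet\ge\rho$ and the result reduces to Gieseker–Petri. When $k$ is large, we pass to the projection $\pi\circ f$ and to maps to $\bP^{r-s-1}$.

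The induction must handle the boundary cases where two of the inequalities are nearly tight simultaneously. There we would attach components that keep the map non-degenerate to both $\bP^r$ and $\bP^{r-s-1}$. We would also verify $H^1$-vanishing through the splitting type of $f^*T_X$ on rational components, namely $\cO(1)$ and $\cO(2)$ summands adjusted along $E$.

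Because the argument uses only deformation theory of stable maps, it is valid in arbitrary characteristic. This is why we avoid any generic-smoothness or Bertini-type arguments.
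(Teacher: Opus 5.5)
Your necessity argument is correct and matches the paper's: $\Aut(X_{r,s})$ acts freely on non-degenerate maps, so $\dim\Aut(X_{r,s})\le h^0(C,f^*T_X)=\chi(C,f^*T_X)$, which rearranges to $\rho_\bullet\ge 0$.

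Sufficiency has a genuine gap, and it starts with your moves. Attaching a rational component at one node does not change $g$, and for a line it raises $\rho$ by $r+1$. Your only genus-raising move is an elliptic tail $E$ attached at one node, and it does not help:
\begin{itemize}
\item If $f$ contracts $E$, the glued map is obstructed, because $H^1(C_0,f^*T_X)$ surjects onto $H^1(E,\cO_E)^{\oplus r}\neq 0$. The same happens if $E$ maps into a fibre of the exceptional divisor.
\item Otherwise $E$ has $b$-degree $\delta\ge 2$, and $\rho$ rises by $(r+1)\delta-r>0$.
\end{itemize}
So every move you list strictly increases $\rho$, and every valid tuple with $\rho=0$ would have to be a base case. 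But, for example, $(g,d,k;r,s)=(3,4,1;2,0)$ (plane quartics through the blown-up point, profile $(0,1,1)$) is neither $k=0$ nor ``$k$ large''.

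The missing idea is to raise the genus by attaching rational curves at \emph{several} general points of $C'$:
\begin{itemize}
\item (L): a line through $2$ points;
\item (R): a degree-$(r-s)$ rational normal curve through $r-s+1$ points, meeting $\bP^s$ once;
\item $(T_\ell)$: a chain $R_0\cup\cdots\cup R_\ell$ through $r+2+\ell(r-s-1)$ points.
\end{itemize}
These raise $(\rho,\rho_\bullet,\rho')$ by $(1,1,1)$, $(r-s,1,0)$ and $((r-s-1)\ell,0,s+1-\ell)$ respectively. The zero entries are exactly what let the induction move along the faces $\rho_\bullet=0$, $\rho'=0$, $\rho=0$; your example comes from $(0,2,1)$ by $(T_0)$. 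The condition to check is $H^1(R,h^*T_X(-\sum x_i))=0$ (Lemma~\ref{lem:attach_and_smooth}), not genericity of normal directions. Since the $x_i$ lie on $C'$ rather than being general points of $X$, producing $h$ for $(T_\ell)$ needs the explicit construction of Lemma~\ref{lem:family_of_maps} and Proposition~\ref{prop:tree_exists}.

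Second, the combinatorics you defer is where the real difficulty lies, and it does not close up with your base cases. Even with (L), (R) and $(T_\ell)$, the reduction stops at $g=0$ and at the four exceptional families (Ia), (Ib), (II), (III) of Definition~\ref{def:terminal}, none of which your base cases cover. The paper handles these by Serre duality $(g,d,k;r,s)\mapsto(g,\dtilde,k;\rtilde,\ws)$, which exchanges $\rho$ and $\rho'$. It then argues separately that the resulting ILS is complete, smooth of expected dimension and base-point-free. This uses Theorem~\ref{thm:farkas} in characteristic $0$, and Hurwitz--Brill--Noether theory, limit linear series on elliptic chains and Farkas's existence theorem in characteristic $p$.

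``Passing to $\pi\circ f$ when $k$ is large'' cannot replace this, because $\pi\circ f$ forgets the data $(D,V\subset W)$ that has to be constructed. In particular, your claim of characteristic independence is unsupported precisely at these base cases.
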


Conditions (1) and (3) are both necessary by the classical Brill-Noether existence theorem. Indeed, post-composing $f$ with the morphisms $b,\pi$ gives non-degenerate maps $C \to \bP^r$ and $C \to \bP^{r-s-1}$. The condition (2) is necessary because, if $H^1(C,f^*T_{\Bl_{\bP^s}\bP^r})=0$, then the moduli space of maps $f\colon C \to \Bl_{\bP^s}\bP^r$ of degree $(d,k)$ is smooth of dimension
\[
h^0(C,f^*T_{\Bl_{\bP^s}\bP^r}) = \rho_{\bullet}(g,d,k;r,s)+\dim(\mathrm{Aut}(\Bl_{\bP^s}\bP^r)).
\]
Thus, the sufficiency of conditions (1)-(3) is the interesting direction of Theorem \ref{thm:main}. 

\begin{remark}\label{rem:not_nice}
In fact, in characteristic zero, whenever the moduli space of non-degenerate maps $f\colon C \to \Bl_{\bP^s}\bP^r$ is non-empty, it is known to be pure of the expected dimension (\cite[Theorem 0.1]{farkas}; see also \cite[Corollary 2.3.4]{cl_complete} or Theorem \ref{thm:farkas} below). Thus, conditions (1)-(3) are equivalently necessary and sufficient with the requirement that $f$ be non-degenerate, but not necessarily nice. In positive characteristic, it could be true in principle that non-degenerate, but non-nice maps for which $\rho_{\bullet}<0$ exist, though this would be ruled out with a proof of Theorem \ref{thm:farkas} in arbitrary characteristic. We also do not know, even in characteristic 0, whether any non-degenerate map is automatically nice, which would be an analog of the Gieseker-Petri theorem.
\end{remark}

In the language of linear series with unexpected secants, a partial result toward Theorem \ref{thm:main} was proven by Farkas in \cite[Theorem 0.5]{farkas}. Namely, the necessary conditions (1)-(3) were given, but sufficiency was only proven under additional numerical assumptions. Our theorem provides an optimal improvement of Farkas's result.

\begin{remark}\label{rem:notation}
We have chosen to diverge in our notation from \cite{farkas}, as follows. We have indexed our invariants with the integers $g,d,k;r,s$, whereas \cite{farkas} uses instead $g,r,d$, in addition to $e=k$ and $f=k-s-1$. The quantity $f$ is natural for the following reason: a non-degenerate curve in $\bP^r$ automatically possesses an $(s+1)$-secant $s$-plane, so $f=k-s-1$ measures the extent to which an unexpected secant is present. However, our arguments will be uniform in $k$, allowing in particular $k<s+1$. (See below for a discussion of the case $k=0$, which recovers the classical Brill-Noether theorem.) Moreover, we take the perspective of studying the moduli space of maps to a fixed blow-up $\Bl_{\bP^s}\bP^r$, as opposed to maps to projective space with a fixed excess degree of secancy.
\end{remark}

We prove our existence theorem, Theorem \ref{thm:main}, by degeneration. Because niceness is an open condition in moduli, it suffices to produce, given $(g,d,k;r,s)$, a nice map out of \emph{one} curve. Whereas Farkas's partial result is achieved via degenerations of limit linear series, our approach uses instead degenerations of stable maps, which in particular allow us to pass through curves that are not of compact type. The technique is quite flexible, and we expect it to be applicable to many other target varieties.

\begin{example}\label{eg:intro_example}
Let $(g,d,k;r,s)=(8,10,5;3,1)$. Then, Theorem \ref{thm:main} asserts the existence of a map $f\colon C \to \Bl_{\bP^1}\bP^3$ of degree $(10,5)$ out of a general curve of genus $8$. Equivalently, $f$ is a space curve of degree $10$ with a $5$-secant line. The existence of such an $f$ is not immediately given by \cite[Theorem 0.5]{farkas}.

We prove the existence of such an $f$ as follows. We construct a map
\[
f\colon C' \cup R_1 \cup R_2 \to \Bl_{\bP^1}\bP^3,
\]
where $C' \to \Bl_{\bP^1}\bP^3$ is a nice map of degree $(6,3)$ out of a general curve of genus $4$, and the maps $R_i \to \Bl_{\bP^1}\bP^3$ are maps of degree $(2,1)$ out of rational curves $R_i \cong \bP^1$. The rational components are attached to $C'$ at $3$ nodes each. The map $C' \to \Bl_{\bP^1}\bP^3$ may be regarded as a canonically embedded
\[
C' \hookrightarrow \bP^1 \times \bP^1 \hookrightarrow \bP^3,
\]
where the blow-up center $\bP^1 \subset \bP^3$ is a ruling of the quadric surface $\bP^1 \times \bP^1 \hookrightarrow \bP^3$. An analysis of the restricted tangent bundle $T_{\Bl_{\bP^1}\bP^3}|_{C'}$ will show that $f$ may be deformed to a nice map of degree $(10,5)$ out of a smooth curve of genus $8$.
\end{example}

Degenerations such as that in Example \ref{eg:intro_example} allow us to reduce Theorem \ref{thm:main}, for any given $r,s$, either to the case $g=0$ or finitely many exceptional base cases. When $k=0$, our proof specializes to a proof of the classical Brill-Noether existence theorem, in which case $g=0$ suffices as a base case. In fact, the $k=0$ proof is implicit in \cite{larson}, which establishes the stronger property of \emph{interpolation} for the restricted tangent bundle of a general map to projective space.

The general case with secants contains several new features. For example, the construction of suitable degenerate curves to establish inductive steps is significantly more delicate, see \S\ref{sec:tree_desiderata}-\ref{sec:tree_construction}. The analysis of base cases is also more complex. When $k=0$, the only base case needed in the proof of classical Brill-Noether existence is the existence of rational normal curves in $\bP^r$, whereas existence with secants requires separate arguments to establish existence for four exceptional families of curves of positive genus.

Our proof of Theorem \ref{thm:main} is valid in arbitrary characteristic. However, it is simpler in characteristic $0$, where Theorem \ref{thm:farkas}, guaranteeing that the moduli space of non-degenerate maps has expected dimension, is available. The paper is structured as follows: after preliminaries in \S \ref{sec:preliminaries}, we set up our inductive arguments in \S \ref{sec:inductive_steps} via constructions of degenerate curves. We reduce Theorem \ref{thm:main} to finitely many base cases in \S \ref{sec:reductions}, and handle the base cases in \S \ref{sec:base_cases}. The simplification afforded by assuming $\charac(\bK)=0$ enters only in \S \ref{sec:terminal_base}. For clarity of exposition, we first complete the proof of Theorem \ref{thm:main} under this assumption, and then explain in \S \ref{sec:positive_char} how to circumvent this assumption with additional ad hoc arguments. While a more satisfying resolution would be to extend Theorem \ref{thm:farkas} to arbitrary characteristic, we do not currently have a proof.

\subsection{Acknowledgements}

The second-named author learned of the proof of the classical Brill-Noether existence theorem given in \cite{larson} at the SLMath Summer School on Algebraic Curves in 2024, organized by Izzet Coskun, Eric Larson, Hannah Larson, and Isabel Vogt. He is grateful to the organizers for inviting him to participate in this event. The second-named author also thanks the University of Cambridge for its hospitality during his visit in summer 2026, where work on this project took place. We are grateful to Gavril Farkas, Nathan Ilten, Dave Jensen,  Alberto Landi, Hannah Larson, Chi Ki Ngai, Dhruv Ranganathan, and Karolyn So for explaining their results to us and for helpful conversations. A.C. was supported SNF grant P500PT-222363  and EPSRC Horizon Europe Guarantee
EP/Y037162/1. C.L. is supported by NSF Grant DMS-2601731.

The authors used AI for help with \LaTeX{} typesetting, and to check numerical statements. Specifically, some numerical inequalities appearing in \S\ref{sec:reductions} were initially verified with AI assistance, and AI was used to determine whether various special cases of Theorem \ref{thm:main} satisfy the numerical hypotheses of \cite[Theorem 0.5]{farkas}. The observation that Serre duality immediately implies existence of an ILS in Proposition \ref{prop:serre_III} was obtained with AI assistance, in a special case. The authors subsequently realized that passing to Serre duals, in conjunction with the inductive arguments of \S\ref{sec:inductive_steps} and additional base-point-freeness arguments, suffices to obtain the stronger conclusion of existence of nice maps in most base cases. All statements and proofs, as they appear in the paper, were developed and written by the authors, who take full responsibility for their correctness.

\section{Preliminaries}
\label{sec:preliminaries}

In this section, we collect the needed results and notation regarding maps from general curves to $\bP^r$ and $\Bl_{\bP^s}(\bP^r)$. Throughout the paper, a \emph{curve} is always assumed to be proper, connected, at-worst-nodal, and defined over an algebraically closed field $\bK$. The \emph{genus} of a curve refers to its arithmetic genus. In this section, all curves are smooth.

\subsection{Brill-Noether theory}
\label{sec:BN}

Let $C$ be a smooth curve, and let $r \geq 1$ be an integer. Recall that a map $f\colon C \to \bP^r$ is \emph{non-degenerate} if its image is not contained in a hyperplane. Let $\cM_d(C,\bP^r)$ be the moduli space of maps $f\colon C \to \bP^r$ of degree $d$, and let $\cM_d(C,\bP^r)^\circ \subset \cM_d(C,\bP^r)$ be the open subset of non-degenerate maps. A non-degenerate map of degree $d$ necessarily has $d \geq r$.

A non-degenerate map $f\colon C \to \bP^r$ is determined by the data of a line bundle $\cL$ of degree $d$ on $C$, along with $r+1$ linearly independent sections $f_0,\ldots,f_r \in H^0(C,\cL)$, taken up to scaling. The linear span
\[
W = \langle f_0,\ldots,f_r\rangle \subset H^0(C,\cL)
\]
is a vector subspace of dimension $r+1$. The data $(\cL,W)$ of a line bundle of degree $d$ and a subspace $W \subset H^0(C,\cL)$ is called a \emph{linear series} of degree $d$ and rank $r$. The moduli space of linear series on $C$ is denoted $G^r_d(C)$, and is projective.

The \emph{Brill-Noether theorem} \cite[V.1.5]{acgh} asserts that, if $C$ is general, then $G^r_d(C)$ is pure of dimension
\[
\rho(g,r,d) := g - (r+1)(g-d+r).
\]
If $\rho(g,r,d)<0$, then $G^r_d(C)$ is empty. The \emph{Brill-Noether existence theorem} \cite[V.1.1]{acgh}, also Theorem \ref{thm:BN_existence} above, asserts furthermore that $G^r_d(C)$ is non-empty if $\rho(g,r,d) \geq 0$.

$G^r_d(C)$ may be cut out as a degeneracy locus inside a Grassmannian bundle over the Picard variety $\Pic^d(C)$, of expected dimension $\rho(g,r,d)$. In particular, the Brill-Noether number $\rho(g,r,d)$ is a priori a lower bound for $\dim(G^r_d(C))$, and the Brill-Noether theorem asserts that this expected dimension is the actual dimension when $C$ is general. The \emph{Gieseker-Petri theorem} \cite[V.1.6]{acgh} asserts furthermore that $G^r_d(C)$ is smooth.

Let $\psi\colon \cM_d(C,\bP^r)^\circ \to G^r_d(C)$ be the map remembering the underlying linear series of a non-degenerate map $f$. Then, the image of $\psi$ is the locus of \emph{base-point-free} (bpf) linear series $W$, which are those without a common point $p \in C$ of vanishing. The bpf locus in $G^r_d(C)$ is open and dense, because the locus of linear series with a base point at a \emph{given} point $p \in C$ has dimension
\[
\rho(g,r,d-1) \leq \rho(g,r,d)-2.
\]

The fiber of $\psi$ over a bpf linear series $W$ is identified with the set of bases of $W$, taken up to scaling. In particular, the morphism $\psi$ is smooth of relative dimension
\[
\dim(\Aut(\bP^r)) = (r+1)^2 - 1,
\]
so $\cM_d(C,\bP^r)^\circ$ is smooth of dimension
\[
\rho(g,r,d) + (r+1)^2 - 1 = (r+1)d - rg + r.
\]
The tangent space to a point $f \in \cM_d(C,\bP^r)^\circ$ is identified with $H^0(C,f^*T_{\bP^r})$.

Finally, let $c\colon G^r_d(C) \to \Pic^d(C)$ be the map remembering the underlying line bundle of a linear series. If $d \geq g+r$, then $c$ is surjective, by Riemann-Roch. If instead $d \leq g+r$ (the ``Brill-Noether special'' range), then the image of $c$ is denoted $W^r_d(C) \subset \Pic^d(C)$, and is the locus of line bundles $\cL$ with $h^0(C,\cL) \geq r+1$. The restriction $c\colon G^r_d(C) \to W^r_d(C)$ is surjective, birational, and an isomorphism over $W^r_d(C) \setminus W^{r+1}_d(C)$.

\subsection{Brill-Noether theory with secants}\label{sec:BN_secant}

Our interest in this paper is to extend the results of \S \ref{sec:BN} to the setting of maps to $\Bl_{\bP^s}(\bP^r)$. Fix non-negative integers $r,s$ with $s \leq r-2$, and a general curve $C$ of genus $g$. Write $X_{r,s} = \Bl_{\bP^s}(\bP^r)$. A non-degenerate (Definition \ref{def:degenerate_nice}) map $f\colon C \to X_{r,s}$ of degree $(d,k)$ gives rise to an ILS on $C$.

\begin{definition}\cite[Definition 4.2]{chz}
\label{def:ILS}
Let $C$ be a smooth curve of genus $g$. Fix integers $d \geq k \geq 0$, and $r,s$ with $-1 \leq s \leq r-1$. An \emph{inclusion of linear series} (\emph{ILS}) $(\cL,D,V \subset W)$ on $C$ consists of the following data.
\begin{itemize}
\item A linear series $(\cL,W) \in G^r_d(C)$,
\item an effective divisor $D \subset C$ of degree $k$, and
\item a subspace \(V \subset W\) of dimension \(r-s\) whose sections vanish along \(D\), giving a linear series $(\cL(-D),V) \in G^{r-s-1}_{d-k}(C)$.
\end{itemize}
The moduli space of ILS on $C$ is denoted $G^{r,r-s-1}_{d,k}(C)$.
\end{definition}

Indeed, when $0 \leq s \leq r-2$, from $f\colon C \to X_{r,s}$, the post-composition $b\circ f\colon C \to \bP^r$ gives rise to $(\cL,W) \in G^r_d(C)$, and the divisor $D \subset C$ is the pullback of the exceptional divisor, $D = f^{-1}(E)$. Then, the post-composition $\pi \circ f\colon C \to \bP^{r-s-1}$ has degree $d-k$, so gives rise to $(\cL(-D),V) \in G^{r-s-1}_{d-k}(C)$. Given a choice of non-zero section $1_D \in H^0(C,\cO(D))$ vanishing along $D$, we obtain an inclusion $V \subset W$. 

While we assume that $0 \leq s \leq r-2$ in order to be able to speak of the blow-up of $\bP^r$ at $\bP^s$, it is useful to allow $s=-1,r-1$ in the definition of an ILS. When $s=r-1$, the space $G^0_{d-k}(C)$ is simply the space of effective divisors of degree $d-k$ on $C$. When $s=-1$, the divisor $D \subset C$ must consist of base points of the linear series $W$.

Let $\cM_{(d,k)}(C,X_{r,s})^\circ$ be the moduli space of non-degenerate maps from $C$ to $X_{r,s}$ of degree $(d,k)$. Then, we have a map $\psi\colon \cM_{(d,k)}(C,X_{r,s})^\circ \to G^{r,r-s-1}_{d,k}(C)$, remembering the underlying ILS. Its image is the locus of ILS that are \emph{base-point-free}.

\begin{definition}
\label{def:bpf}
An ILS $(\cL,D,V \subset W) \in G^{r,r-s-1}_{d,k}(C)$ is \emph{base-point-free} (bpf) if both $(\cL,W) \in G^r_d(C)$ and $(\cL(-D),V) \in G^{r-s-1}_{d-k}(C)$ are. Equivalently, the following two conditions both hold.
\begin{enumerate}
\item[(BPF1)] $(\cL(-D),V) \in G^{r-s-1}_{d-k}(C)$ is bpf.
\item[(BPF2)] For all $p \in D$, not every section of $W \subset H^0(C,\cL)$ vanishes at $p$.
\end{enumerate}
\end{definition}

In the case of sections $v \in V$, we make a distinction between vanishing at $p \in C$ when viewed as an element of $H^0(C,\cL(-D))$ or $H^0(C,\cL)$. The requirement (BPF1) is that there is no point $p \in C$ for which every $v \in V$ vanishes when viewed as a section of $\cL(-D)$.

The map $\psi\colon \cM_{(d,k)}(C,X_{r,s})^\circ \to G^{r,r-s-1}_{d,k}(C)$ surjects onto the bpf locus. The non-empty fibers of $\psi$ are smooth of dimension
\[
\dim(\Aut(X_{r,s})) = (r-s)^2 + (s+1)(r+1) - 1,
\]
see \cite[Corollary 2.3.4]{cl_complete} for an explicit parameterization.

When $\charac(\bK)=0$, Farkas proved the following analogue of the Brill-Noether theorem; see \cite[\S 2.3]{cl_complete} for a translation into this particular formulation.

\begin{theorem}\label{thm:farkas}\cite[Theorem 0.1]{farkas}
Suppose that $\charac(\bK)=0$. Then, $G^{r,r-s-1}_{d,k}(C)$ is pure of dimension
\[
\rho_{\bullet}(g,d,k;r,s) := \rho(g,r,d) + (r-s)(s+1) - (r-s-1)k,
\]
the expected. If $\rho_{\bullet}(g,d,k;r,s)<0$, then $G^{r,r-s-1}_{d,k}(C)$ is empty.

In particular, $\cM_{(d,k)}(C,X_{r,s})^\circ$ is pure of dimension
\[
\rho_{\bullet}(g,d,k;r,s) + \dim(\Aut(X_{r,s})) = \chi(C,f^*T_{X_{r,s}}) = (r+1)d - (r-s-1)k - rg + r,
\]
the expected.
\end{theorem}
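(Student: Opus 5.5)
The statement has a lower bound, an upper bound and a consequence for maps. I would obtain the lower bound by exhibiting $G^{r,r-s-1}_{d,k}(C)$ as a degeneracy locus. The upper bound, which is the real content, would come from degenerating $C$ to a flag curve and counting vanishing data of limit linear series. The statement about $\cM_{(d,k)}(C,X_{r,s})^\circ$ then follows formally from the description of $\psi$ in \S\ref{sec:BN_secant}.

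\emph{Lower bound.} Work over $G^r_d(C)\times C_k$, where $C_k$ is the symmetric product. Let $\mathcal{W}$ be the tautological rank $r+1$ bundle. Let $\mathcal{E}$ be the rank $k$ bundle with fiber $H^0(\cL|_D)$, and let $\phi\colon\mathcal{W}\to\mathcal{E}$ be the evaluation map. An ILS is a point where $\ker\phi$ has dimension at least $r-s$, i.e.\ where $\operatorname{rank}\phi\le s+1$. If $k\ge s+1$, the expected codimension of this locus is $(r+1-(s+1))(k-(s+1))=(r-s)(k-s-1)$. Since $G^r_d(C)$ is pure of dimension $\rho(g,r,d)$ by the Brill--Noether theorem, every component of the locus has dimension at least
\[
\rho(g,r,d)+k-(r-s)(k-s-1)=\rho_{\bullet}(g,d,k;r,s).
\]
To realize $G^{r,r-s-1}_{d,k}(C)$ itself as this locus, I would pass to the Grassmannian bundle of $(r-s)$-planes $V\subset\mathcal{W}$ and impose $V\to\mathcal{E}$ equal to zero. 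This costs $(r-s)k$ conditions on a space of relative dimension $(r-s)(s+1)$, giving the same count and covering the case $k<s+1$ uniformly.

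\emph{Upper bound.} This is the main obstacle. I would degenerate $C$ to a flag curve: a rational spine with $g$ elliptic tails, or a chain of $g$ elliptic curves $E_1,\dots,E_g$ joined at general points $p_i,q_i$ with $p_i-q_i$ non-torsion. Suppose some component of $G^{r,r-s-1}_{d,k}$ on the general fiber has dimension exceeding $\rho_\bullet$. Then its limit gives a family of refined limit ILS on the special fiber of the same dimension. A limit ILS consists of a limit $g^r_d$, namely aspects $W_{E_i}$ with vanishing sequences at $p_i,q_i$, together with a limit $g^{r-s-1}_{d-k}$ compatible with $D$ specializing to $D=\sum D_i$, $D_i$ supported on $E_i$. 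The count is additive over components. On $E_i$, let $k_i=\deg D_i$ and let $\rho_i$ be the adjusted Brill--Noether number of the aspect. The key local estimate is that the family of pairs ($V_i\subset W_i$ vanishing on $D_i$) on the twice-pointed elliptic curve has dimension at most
\[
\rho_i+(r-s)(s+1)\delta_i-(r-s-1)k_i+(\text{ramification corrections}),
\]
where the sum of the $\delta_i$ is $1$ and the corrections telescope. Summing over $i$, with $\sum_i\rho_i\le\rho$, gives $\rho_\bullet$. To prove the local estimate on each $E_i$, I would reduce to counting secant divisors to the image curve of the aspect. This is where $\charac\bK=0$ enters: the ramification and Plücker-type bounds on an elliptic curve and general position of secant divisors need the generic smoothness that Farkas uses. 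If this direct count becomes unwieldy, my first fallback is to pass to the complementary linear series $V\subset H^0(\cL(-D))$ and run Eisenbud--Harris additivity for the pair of flags simultaneously.

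Emptiness when $\rho_\bullet<0$ follows from the same count, since the dimension bound is then negative. Finally, $\psi\colon\cM_{(d,k)}(C,X_{r,s})^\circ\to G^{r,r-s-1}_{d,k}(C)$ surjects onto the open bpf locus and has smooth fibers of dimension $\dim\Aut(X_{r,s})$. Hence $\cM_{(d,k)}(C,X_{r,s})^\circ$ is pure of dimension $\rho_\bullet+\dim\Aut(X_{r,s})$, and Riemann--Roch identifies this with $\chi(C,f^*T_{X_{r,s}})=(r+1)d-(r-s-1)k-rg+r$.
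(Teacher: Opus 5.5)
\paragraph{Where the proposal stands.} Your lower bound is correct: $G^{r,r-s-1}_{d,k}(C)$ is the zero locus of a section of a rank $(r-s)k$ bundle on the Grassmannian bundle over $G^r_d(C)\times C_k$. Your deduction of the statement about $\cM_{(d,k)}(C,X_{r,s})^\circ$ via $\psi$ and Riemann--Roch is also correct, and both match what the paper records. The paper does not reprove the upper bound; it quotes Farkas.

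\paragraph{The gap in the upper bound.} Your sketch of the upper bound, which is the whole content of the theorem, does not contain an argument. The ``key local estimate'' has unspecified ``ramification corrections'' that ``telescope'' and weights $\delta_i$ with $\sum_i\delta_i=1$. These are chosen so that the sum comes out to $\rho_\bullet$, and you give no mechanism for proving the estimate. The real difficulty is to show that the $(r-s)k_i$ conditions imposed by $D_i$ on $V_i\subset W_i$ are independent of the very non-generic vanishing the aspects already have at the nodes. Your proposal does not address this. Writing $D=\sum D_i$ with $D_i$ supported on the $E_i$ also ignores that points of $D$ can specialize to, or collide at, the nodes. Handling this forces base change and blow-ups, which insert rational components on which the aspects carry ramification at several points. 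Finally, limits of ILS are in general only crude, not refined.

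\paragraph{Where characteristic $0$ actually enters.} Bounding the dimension on those rational components is where characteristic $0$ enters in Farkas's proof. He degenerates to a tree of rational curves with elliptic tails and uses that linear series on $\bP^1$ with ramification at fixed points have the expected dimension, which fails in positive characteristic. The input is not Pl\"ucker-type bounds on elliptic curves, so your account misidentifies it.

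\paragraph{The elliptic-chain alternative.} Running the count on a chain of elliptic curves is exactly what the authors say they do not know how to do. Limit linear series on an elliptic chain with $p_{i-1}-p_i$ non-torsion are characteristic-free. A working local estimate there would therefore prove Theorem~\ref{thm:farkas} in every characteristic. The paper explicitly leaves that open and works around it with ad hoc arguments in \S\ref{sec:positive_char}.

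As written, the upper bound is asserted rather than proved, and with it purity and emptiness for $\rho_\bullet<0$. To close it you need either Farkas's flag-curve argument with the dimension count on $\bP^1$, or a genuinely new argument on the elliptic chain that handles divisors specializing to nodes.
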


The moduli space of ILS $G^{r,r-s-1}_{d,k}(C)$ can be cut out as a degeneracy locus, whose dimension is bounded below by $\rho_{\bullet}(g,d,k;r,s)$, cf. \cite[\S 4]{chz}. Concretely, the value of the expected dimension, $\rho_{\bullet}(g,d,k;r,s)$, can be obtained as follows: given a linear series $(\cL,W) \in G^r_d(C)$, there is a
\[
\dim(\Gr(r-s,r+1)) = (r-s)(s+1)
\]
dimensional family of subspaces $V \subset W$. There is a $k$-parameter family of effective divisors $D \subset C$, and the condition that every section of $V$ vanish along $D$, as a section of $H^0(C,\cL)$, imposes $(r-s)k$ expected conditions. Because $\dim(G^r_d(C))=\rho(g,r,d)$, the formula for the expected dimension follows.

While the assumption on characteristic do not seem to be explicitly stated in \cite{farkas}, the proof of Theorem \ref{thm:farkas} uses it in an essential way. Indeed, after degeneration of \(C\) to a ``flag'' curve consisting of a \emph{tree} of rational curves attached to elliptic tails, the proof uses that the moduli space of linear series on \(\bP^{1}\) with ramification conditions at fixed points has expected dimension, which only holds in characteristic zero. While other proofs along analogous lines in Brill-Noether theory can be made to hold in arbitrary characteristic by replacing the degenerate curve with a chain of elliptic curves, we do not know how to do this in the setting of Theorem \ref{thm:farkas}. As such, we do not know whether Theorem \ref{thm:farkas} holds in arbitrary characteristic. We also do not know whether the natural analog of the Gieseker-Petri theorem, that $G^{r,r-s-1}_{d,k}(C)$ and $\cM_{(d,k)}(C,X_{r,s})^\circ$ are smooth, holds, in any characteristic.

Our main theorem, Theorem \ref{thm:main}, concerns the existence of components of $\cM_{(d,k)}(C,X_{r,s})^\circ$ which are generically smooth of expected dimension, in arbitrary characteristic. Indeed, a non-degenerate map $f$ is nice (Definition \ref{def:degenerate_nice}) if and only if it defines a point of $\cM_{(d,k)}(C,X_{r,s})^\circ$ that is smooth of expected dimension. Theorem \ref{thm:main} also does not rule out the possibility that $\cM_{(d,k)}(C,X_{r,s})^\circ$ contains everywhere non-reduced components of expected dimension.

In our proof of Theorem \ref{thm:main}, it will be useful to switch back and forth between the perspective of constructing smooth points of the moduli space of maps $\cM_{(d,k)}(C,X_{r,s})^\circ$ of expected dimension, and constructing smooth points of the moduli space of ILS $G^{r,r-s-1}_{d,k}(C)$ of expected dimension, which are in addition bpf. Our inductive arguments will primarily involve moduli spaces of maps, but the ILS perspective will be used to handle the base cases.

We close this section by setting some terminology that will be useful in the proof of Theorem \ref{thm:main}.

\begin{definition}
\label{def:valid}
Fix $r,s$ with $0 \leq s \leq r-2$. We say that a tuple $(g,d,k;r,s)$ is \emph{valid} if $g,k \geq 0$, and, in addition, the quantities
\[
\rho(g,r,d), \quad \rho_{\bullet}(g,d,k;r,s), \quad \rho'(g,d,k;r,s),
\]
appearing in Theorem \ref{thm:main}, are all non-negative. 

We will often abbreviate these quantities by $\rho,\rho_{\bullet},\rho'$, respectively. The triple $(\rho,\rho_{\bullet},\rho')$ is called the \emph{profile} of $(g,d,k;r,s)$. 

We write $\Ex(g,d,k;r,s)$ for the statement that Theorem \ref{thm:main} holds for $(g,d,k;r,s)$. When $r,s$ are fixed, we will often drop them from the notation, e.g., writing $\Ex(g,d,k)$.
\end{definition}

Thus, Theorem \ref{thm:main} is the statement that, if $(g,d,k;r,s)$ is valid, then $\Ex(g,d,k;r,s)$ holds. 

\begin{remark}\label{rmk:degree-checks}
    We record the following basic observation:
\[
\begin{aligned}
\rho \geq 0 &\implies d \geq r,\\
\rho' \geq 0 &\implies d-k \geq r-s-1.
\end{aligned}
\]
    In particular, if $(g,d,k;r,s)$ is valid, then the inequalities $d \geq r$ and $d-k \geq r-s-1$, which are necessary for non-degenerate maps to exist, are automatic.
\end{remark}

\section{Inductive steps}
\label{sec:inductive_steps}

\subsection{Setup}
\label{sc:inductive_setup}

Throughout this section, we fix \(r,s\), and write \(X=X_{r,s}\). Recall that we write \(\Ex(g,d,k)\) for the statement that \((g,d,k)\) is valid, and that there exists a nice map \(f\colon C \to X\) of degree \((d,k)\), where \(C\) is a general curve of genus \(g\). The main goal of this section is to prove the following, which will give a mechanism for establishing existence by induction.

\begin{proposition}
\,
\label{prop:inductive_steps_main}
\begin{enumerate}
    \item[(L)] If \(\Ex(g-1,d-1,k)\) holds, then \(\Ex(g,d,k)\) holds.
    \item[(R)] If \(\Ex(g-(r-s),d-(r-s),k-1)\) holds, then \(\Ex(g,d,k)\) holds.
    \item[\((T_\ell)\)] Let \(\ell\) be an integer with \(0\leq \ell\leq s+1\). If
    \[
    \Ex\bigl(g-(r+1)-\ell(r-s-1),d-r-\ell(r-s-1),k-\ell\bigr)
    \]
    holds, then \(\Ex(g,d,k)\) holds.
\end{enumerate}
\end{proposition}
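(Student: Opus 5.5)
In each of the three cases I would start from a nice map $f_0\colon C_0 \to X$ out of a general curve of genus $g_0$, given by the hypothesis. I would then attach rational curves mapping to $X$ and obtain a nodal curve $C = C_0 \cup R$ of arithmetic genus $g$ with a map $f\colon C \to X$ of degree $(d,k)$. The goal is to show that $f$ smooths to a nice map from a smooth curve of genus $g$.

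The local smoothing criterion I have in mind is the following. If $H^1(C,f^*T_X)=0$, then every node of $C$ can be smoothed compatibly with $f$. In that case the space of maps is smooth of expected dimension at $[f]$ and dominates $\overline{\mathcal M}_g$. Domination follows once we check that the forgetful map to $\overline{\mathcal M}_g$ reaches a neighborhood of $[C]$ and that $C$ stabilizes to a point whose deformations include smooth general curves. Since $C_0$ is general and the attached components are rational, standard dimension counting reduces domination to the surjectivity of $H^0$ of the normal sheaf onto the node smoothing parameters, and that surjectivity again follows from vanishing of $H^1$. Non-degeneracy is open and holds on $C_0$ already, so it persists to the generic fiber. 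Hence everything reduces to proving $H^1(C,f^*T_X)=0$. By the normalization sequence, this amounts to showing that $H^1(R_i, f^*T_X|_{R_i}(-\text{nodes}))=0$ for each rational component, together with $H^1(C_0,f_0^*T_X)=0$, which holds since $f_0$ is nice. Equivalently, the restriction $H^0(R_i,f^*T_X)\to \bigoplus T_X|_{f(p)}$ at the attaching points must be surjective. On $\mathbb{P}^1$ this says every summand of $f^*T_X|_{R_i}$ has degree at least (number of nodes on $R_i$) $-1$.

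The constructions I would use are as follows.
\begin{itemize}
\item For (L), attach a line in a general direction at two general points of $f_0(C_0)$. This adds degree $(1,0)$ and genus $1$. I would use the $2$-secant line of $f_0(C_0)$ itself, avoiding $E$.
\item For (R), attach a rational curve of degree $(r-s,1)$ meeting $C_0$ at $r-s+1$ points. Its image under $\pi$ is a rational normal curve in $\mathbb{P}^{r-s-1}$ of degree $r-s-1$, and it meets $E$ once.
\item For $(T_\ell)$, attach a rational curve of degree $(r+\ell(r-s-1),\ell)$ meeting $C_0$ at $r+2+\ell(r-s-1)$ points. This adds genus $r+1+\ell(r-s-1)$, as required. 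For $\ell=0$ this is a rational normal curve in $\mathbb{P}^r$ meeting $C_0$ at $r+2$ points.
\end{itemize}
In each case the attaching points are to be general points of $f_0(C_0)$. The existence of a rational curve of the given degree through that many prescribed general points of $C_0$ must come from interpolation for the rational curve class in $X$. For the curves of degree $(r-s,1)$ and the $(T_\ell)$ curves, I expect the restricted tangent bundle to be balanced in the appropriate sense, which is what interpolation through that many points requires.

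The main obstacle is the interpolation statement for the rational curves: their restricted tangent bundles must be sufficiently balanced, and they must pass through the required number of points that lie on the fixed curve $f_0(C_0)$ rather than general points of $X$. I would first compute $f^*T_X$ on rational curves of the relevant classes directly. I would use the sequences $0\to T_{X/\mathbb{P}^{r-s-1}}\to T_X\to \pi^*T_{\mathbb{P}^{r-s-1}}\to 0$ and the description of $X$ as a projective bundle $\mathbb{P}(\mathcal O^{s+1}\oplus\mathcal O(1))$ over $\mathbb{P}^{r-s-1}$, degenerating the rational curves to chains if needed. To handle points on $C_0$ rather than general points, I would argue with the evaluation map: since $f_0$ deforms with $H^1=0$, the points of $f_0(C_0)$ can be moved, and the incidence correspondence is dominant. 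If this fails, I would instead glue at general points and absorb the discrepancy using the $(r+1)$-dimensional freedom in choosing $f_0$.
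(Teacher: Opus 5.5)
Your reduction to $H^1(C,f^*T_X)=0$ through the normalization sequence is the paper's Lemma \ref{lem:attach_and_smooth}. Your constructions for (L) and $(T_0)$ agree with the paper's.

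For (R) you leave existence to ``interpolation,'' but the construction is elementary. The span $\Lambda\cong\bP^{r-s}$ of $r-s+1$ general points of $f_0(C_0)$ meets $\bP^s$ in a single point $q$. A rational normal curve in $\Lambda$ through these points and $q$ works, and it has $h^*T_X\cong\cO(r-s)^{\oplus r-1}\oplus\cO(r-s+1)$.

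The genuine gap is $(T_\ell)$ for $\ell\geq 1$. This is exactly the step you flag as the main obstacle, and neither of your proposed remedies closes it.
\begin{itemize}
\item Take $N=r+2+\ell(r-s-1)$ attaching points on an irreducible $R$. Then $\chi(h^*T_X)=rN$, so the vanishing $H^1(h^*T_X(-p_1-\cdots-p_N))=0$ forces $h^*T_X\cong\cO(N-1)^{\oplus r}$. You therefore face a rigid interpolation problem through $N$ points \emph{of $f_0(C_0)$}.
\item Interpolation for general points of $X$ says nothing about this problem.
\item Making the points general by deforming $f_0$ would require nice maps of the predecessor type to pass through $N$ general points of $X$. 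This typically fails by dimension count. Take $(r,s,\ell)=(3,1,2)$ and $f_0$ a twisted cubic, i.e.\ the valid tuple $(0,3,0)$, which leads to $(6,8,2)$. Then $N=7$, but twisted cubics with $7$ marked points form a family of dimension $12+7=19<21=\dim X^7$.
\item Even when the count works, dominance would need an interpolation property for the normal bundle of $f_0$, which is not available.
\item The fallback of ``absorbing the discrepancy'' is not a concrete argument. If the gluing points are not on $f_0(C_0)$, you do not get a connected curve.
\end{itemize}

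The missing idea is to replace the irreducible $R$ by a chain $R_0\cup R_1\cup\cdots\cup R_\ell$:
\begin{itemize}
\item $R_0$ has degree $(r-\ell,0)$ and passes through $r-\ell+1$ points of $C_0$;
\item each $R_i$ with $i\geq 1$ has degree $(r-s,1)$ and passes through $r-s$ points, or $r-s+1$ points for $R_\ell$;
\item the marked points on $R_1,\dots,R_\ell$ are fixed in advance.
\end{itemize}
Lemma \ref{lem:family_of_maps} shows that the maps $h^i$ with these incidences and with $h^i(y_i)\in\Pi_i\setminus B_i$ form an open subset of a projective space. Evaluation at $y_{i-1}$ identifies this family isomorphically with $\Pi_{i-1}\setminus B_{i-1}$, a linear space minus finitely many linear subspaces.

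Consequently, the only condition imposed on the points of $C_0$ is linear general position with respect to finitely many linear spaces. Non-degeneracy of $f_0$ guarantees this (Lemma \ref{lem:curve_linearly_general}). Then $R_0$ is a rational normal curve in the span $\Lambda_0$ of its points, passing through the unique, transverse, point of $\Lambda_0\cap\Pi_0$.

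Triviality of first-order deformations propagates along the chain, giving $H^0(R,h^*T_X(-\sum p_{ij}))=0$. Together with $\chi=0$, this yields the required $H^1$-vanishing. Without some such explicit construction, your proof of $(T_\ell)$ is incomplete.
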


The method of proof for all three statements will be to start with a nice map \(f'\colon C' \to X\), and construct a map \(f\colon C=C'\cup R \to X\), where \(R\) is a (possibly reducible) rational curve attached to \(C'\) at nodes, and \(f|_{C'}=f'\). Then, we will deform \(f\) to a nice map out of a smooth curve. The names of the statements in Proposition \ref{prop:inductive_steps_main} stand for \textbf{L}ine, \textbf{R}ational normal curve, and \textbf{T}ree, respectively, which describe the rational curves \(R\) used to prove the respective statements.

\begin{remark}
\label{rem:dimension_effects}
In each of the three cases of Proposition~\ref{prop:inductive_steps_main}, passing from \((g,d,k)\) to the predecessor (e.g. \((g-1,d-1,k)\)) has the following effects on the profile \((\rho,\rho_\bullet,\rho')\).
\begin{itemize}
    \item[(L)] \((\rho,\rho_\bullet,\rho')\to(\rho-1,\rho_\bullet-1,\rho'-1)\).
    \item[(R)] \((\rho,\rho_\bullet,\rho')\to(\rho-(r-s),\rho_\bullet-1,\rho')\).
    \item[\((T_\ell)\)] \((\rho,\rho_\bullet,\rho')\to(\rho-(r-s-1)\ell,\rho_\bullet,\rho'-(s+1-\ell))\).
\end{itemize}
Note that the values of \(\rho,\rho_\bullet,\rho'\) can only decrease under each of these decrements, as must be the case for the statements in Proposition~\ref{prop:inductive_steps_main} to be plausible.
\end{remark}

\begin{lemma}
\label{lem:attach_and_smooth}
Let \(f'\colon C'\to X\) be a nice map, and let \(R\) be a connected, nodal curve of arithmetic genus \(0\). Let \(C=C'\cup R\) be a nodal curve given by attaching \(C'\) and \(R\) at nodes \(C'\cap R=\{x_1,\ldots,x_m\}\). Let \(f\colon C\to X\) be any map for which \(f|_{C'}=f'\). Suppose that
\[
H^1\bigl(R,f^*T_X(-x_1-\cdots-x_m)\bigr)=0.
\]
Then, \(f\) deforms to a nice map.

In particular, write \(g'=g(C')\), and \((d',k')\) for the degree of \(f'\). Let \((\delta,\kappa)\) be the degree of \(f|_R\). Then, \(\Ex(g'+m-1,d'+\delta,k'+\kappa)\) holds.
\end{lemma}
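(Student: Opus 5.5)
The plan is to show that $f$ is an unobstructed point of the moduli space of stable maps to $X$ with source of genus $g'+m-1$. It then suffices to show that the general deformation is a nice map from a smooth general curve.

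\textbf{Step 1: vanishing on $C$.} Normalizing the nodes $x_i$ gives the exact sequence
\[
0 \to f^*T_X|_R(-x_1-\cdots-x_m) \to f^*T_X \to f'^*T_X \to 0 .
\]
Here $H^1(R, f^*T_X(-\sum x_i))=0$ by hypothesis, and $H^1(C',f'^*T_X)=0$ because $f'$ is nice. Taking cohomology gives $H^1(C,f^*T_X)=0$, and hence $H^1$ of the complex $[T_C\to f^*T_X]$ controlling deformations of the stable map vanishes as well. Here one uses that $H^2$-type terms are zero on a curve and that $\operatorname{Ext}^2$ contributions from the nodes vanish.

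\textbf{Step 2: the family.} The space of stable maps (or maps with marked-free source) is therefore smooth at $[f]$ of the expected dimension $\chi(f^*T_X)+3(g-1)$, with $g=g'+m-1$. We must check that the forgetful map to the stack of prestable curves is smooth at $[f]$. This follows from the surjectivity $H^0$ onto the deformation space, which is equivalent to $H^1(C,f^*T_X)=0$. Consequently the nodes can be smoothed independently, and $f$ deforms to maps $f_t\colon C_t\to X$ with $C_t$ smooth.

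\textbf{Step 3: generality of the source.} We need $C_t$ to be general in $\cM_g$. Since $C'$ is general (by $\Ex(g',d',k')$) and $R$ is rational, the image of the deformation space in $\overline{\cM}_g$ contains a neighborhood of the stabilization of $C$. The map to $\overline{\cM}_{g}$ factors through the prestable stack, on which our map is smooth, so its image is open near $[C]$. In the stable-reduction picture, $C$ stabilizes to $C'$ with $m-1$ handles or a rational tail contracted; such curves lie in the closure of every open subset of $\cM_g$ that dominates. Because the prestable stack is irreducible and smoothness makes the map dominant onto it, the generic $C_t$ is general.

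\textbf{Step 4: open conditions.} $H^1=0$ is open by semicontinuity. Degree $(d'+\delta,k'+\kappa)$ is locally constant. Non-degeneracy of $b\circ f_t$ is open, and it holds at $t=0$ because $b\circ f'$ is already nondegenerate, so its restriction spans $\bP^r$. By Remark~\ref{rem:degenerate}, $\pi\circ f_t$ is then also nondegenerate. Finally, the image of $f_t$ is not contained in $E$, since $f'$ is not.

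The main obstacle I expect is Step 2/3: making precise that vanishing of $H^1(f^*T_X)$ gives smoothness of the map to the stack of prestable curves at a nodal point. I would cite the standard deformation theory of stable maps, where the relative obstruction space is $H^1(C,f^*T_X)$.
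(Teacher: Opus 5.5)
Your proof is correct and follows the same route as the paper: show $H^1(C,f^*T_X)=0$, conclude that the forgetful map to the stack $\mathfrak{M}_{g'+m-1}$ of prestable curves is smooth at $[f]$, so that a neighborhood of $[f]$ dominates this irreducible stack, and then use that niceness is an open condition. The only real difference is that you get the vanishing in one step from $0\to f^*T_X|_R(-x_1-\cdots-x_m)\to f^*T_X\to f'^*T_X\to 0$, whereas the paper uses the normalization sequence plus the surjectivity of $H^0(R,f^*T_X)\to\bigoplus_i (T_X)_{f(x_i)}$; also, the generality of $C'$ and the stabilization remarks in your Step 3 are unnecessary, since the irreducibility of the prestable stack alone makes $C_t$ general.
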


\begin{proof}
We claim first that \(H^1(C,f^*T_X)=0\). Indeed, write \(\nu\colon C'\sqcup R\to C\) for the normalization map. Then, the short exact sequence
\[
0 \to f^*T_X \to \nu_*\nu^*(f^*T_X)
   \to \bigoplus_{i=1}^m (T_X)_{f(x_i)}
   \to 0.
\]
induces an exact sequence
\begin{equation}
\label{eq:exact_sequence_normalization}
H^0(C',f^*T_X)\oplus H^0(R,f^*T_X)\to \bigoplus_{i=1}^m (T_X)_{f(x_i)}\to H^1(C,f^*T_X)\to H^1(C',f^*T_X)\oplus H^1(R,f^*T_X)=0.
\end{equation}
The vanishing of the last term follows from the assumptions that \(f'\) is nice and that \(H^1\bigl(R,f^*T_X(-x_1-\cdots-x_m)\bigr)=0\). On the other hand, we also have an exact sequence
\[
H^0\bigl(R,f^*T_X(-x_1-\cdots-x_m)\bigr)\to H^0(R,f^*T_X)\to \bigoplus_{i=1}^m (T_X)_{f(x_i)}\to H^1\bigl(R,f^*T_X(-x_1-\cdots-x_m)\bigr)=0,
\]
so the first map in \eqref{eq:exact_sequence_normalization} is surjective. It follows that \(H^1(C,f^*T_X)=0\).

Therefore, \(f\) defines a smooth point of expected dimension for the morphism
\[
\barM_{g+m-1,0}(X,(d'+\delta,k'+\kappa))\to \mathfrak{M}_{g+m-1},
\]
where $\barM_{g+m-1,0}(X,(d'+\delta,k'+\kappa))$ is the moduli space of stable maps and $\mathfrak{M}_{g+m-1}$ is the moduli space of prestable curves. We conclude that there exists an open neighborhood of \(f\) dominating \(\mathfrak M_{g+m-1}\), and containing nice maps of degree \((d'+\delta,k'+\kappa)\). Note in particular that a general deformation of $f$ remains non-degenerate.
\end{proof}

\begin{remark}
\label{rem:make_nodal_curve}
In the setting of Lemma~\ref{lem:attach_and_smooth}, suppose the data of \(f'\colon C'\to X\) is given, along with points \(x_1,\ldots,x_m\in C'\). Then, constructing a map \(f\colon C=C'\cup R\to X\) amounts to finding a nodal rational curve \(R\) with smooth points (abusively denoted) \(x_1,\ldots,x_m\in R\), along with a map \(h\colon R\to X\) with \(f'(x_i)=h(x_i)\) for all \(i=1,\ldots,m\). Indeed, a map out of a nodal curve is equivalent to the data of a map out of its normalization, mapping preimages of any given node to the same point. Concretely, one needs to find a rational curve interpolating through (the images under \(f'\) of) the points \(x_1,\ldots,x_m\) in \(X\).
\end{remark}

Fix a nice, and in particular non-degenerate, map \(f'\colon C'\to X\). Throughout this section, we will, by abuse of notation, freely identify points \(x\in C'\) with their images \(f'(x)\in X\), and in turn with their images under $b:X\to \bP^r$. While \(f'\) is not assumed to be injective, these identifications will be harmless. We will repeatedly use the following lemma.

Recall that \(x_1,\ldots,x_m\in \bP^r\) are said to be in \emph{linearly general position} if any collection of \(r'\leq r+1\) of the points \(x_j\) spans a linear space of dimension exactly \(r'-1\). More generally, given a linear subspace $\Pi \subseteq \bP^r$ we say that  \(x_1,\ldots,x_m\in \bP^r\) are in linearly general position with respect to $\Pi$ if any any collection of \(r'\leq r+1\) of the points \(x_j\) spans a linear space intersecting $\Pi$ in the expected dimension. Equivalently, their images under the projection map from $\Pi$ are in linearly general position. 

\begin{lemma}
\label{lem:curve_linearly_general}
Let \(x_1,\ldots,x_m\in C'\) be general points and $\Pi_1,\ldots, \Pi_\ell \subseteq \bP^r$ be any collection of linear spaces. Then, (the images under \(f'\) of) the points \(x_1,\ldots,x_m\) in \(X\) are in linearly general position with respect to each $\Pi_1,\ldots, \Pi_\ell$.

In particular, if \(m\leq r-s\), then the linear span of the \(x_j\) in \(\bP^r\) does not meet the blown-up linear space \(\bP^s\subset \bP^r\), and if \(m=r-s+1\), then \(\langle x_1,\ldots,x_{r-s+1}\rangle\cap \bP^s\) is a single point.
\end{lemma}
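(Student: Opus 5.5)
The plan is to reduce the statement to the classical general position theorem for the image of a non-degenerate curve, applied after projecting from each $\Pi_i$. Since finite intersections of dense open conditions on $(C')^m$ are dense open, it suffices to treat a single linear space $\Pi=\Pi_i$. Moreover, being in linearly general position with respect to $\Pi$ is equivalent to the images under the linear projection $p_\Pi\colon \bP^r \dashrightarrow \bP^{r-\dim\Pi-1}$ being in linearly general position. Two things must be checked: the projected points are defined, and they are in linearly general position in the target.

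First, I use that $b\circ f'$ is non-degenerate, because $f'$ is nice (Definition \ref{def:degenerate_nice}). Consequently the image curve is not contained in $\Pi$ when $\Pi \neq \bP^r$; the case $\Pi=\bP^r$ is vacuous. So $b\circ f'(C')\cap\Pi$ is finite, and general points avoid it. The composite $p_\Pi\circ b\circ f'$ then extends to a morphism $C'\to\bP^{r-\dim\Pi-1}$. This morphism is non-degenerate: a hyperplane containing its image pulls back to a hyperplane of $\bP^r$ containing $\Pi$ and the curve, contradicting non-degeneracy.

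Second, I show that general points of a non-degenerate map $h\colon C'\to\bP^n$ are in linearly general position, arguing by induction on the number $r'\le n+1$ of points. Given general $x_1,\dots,x_{r'-1}$ spanning a $\bP^{r'-2}$ with $r'-2<n$, that span is a proper linear subspace. Non-degeneracy then forces $h(C')\not\subset\langle x_1,\dots,x_{r'-1}\rangle$, so a general $x_{r'}$ lies outside the span, and the span of all $r'$ points has dimension $r'-1$. Each condition is open on $(C')^{r'}$, and each is shown nonempty, so general $m$-tuples satisfy it for all subsets. This step involves no characteristic issues: it uses only that the curve is not contained in a proper linear subspace, not the trisecant lemma or uniform position.

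For the ``in particular'' statements, take $\Pi=\bP^s$, with projection target $\bP^{r-s-1}$. If $m\le r-s$, the projected points span a $\bP^{m-1}$, so $\langle x_1,\dots,x_m\rangle$ has dimension $m-1$ and its image under projection has the same dimension. Projection from $\bP^s$ drops dimension exactly by $1+\dim(\text{span}\cap\bP^s)$, so the intersection is empty. If $m=r-s+1$, the span has dimension $r-s$ by general position in $\bP^r$, since $r-s+1\le r+1$. Its projection is all of $\bP^{r-s-1}$, a drop of one, so the intersection with $\bP^s$ is a single point. I expect the only delicate point to be stating precisely what ``expected dimension'' means for spans intersecting $\Pi$ and matching it with the projection formulation; the rest is routine.
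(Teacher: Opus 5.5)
Your proposal is correct and follows essentially the same route as the paper: you project from each $\Pi_i$, use that the projected curves (and $b\circ f'$ itself) are non-degenerate, and choose the points inductively so that each new point avoids the spans of the previously chosen ones, combined with Zariski openness. You also make explicit several details the paper leaves implicit: that general points avoid $\Pi_i$, that the projected maps are non-degenerate, and how both ``in particular'' clauses follow from the dimension drop under projection from $\bP^s$.
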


\begin{proof}
Let \(s_i= \mathrm{dim}(\Pi_i)\), and let \(\pi_i: \bP^r \dashrightarrow \bP^{r-s_i-1}\) be the projection from \(\Pi_i\), for \(i=1,\ldots, \ell\). The condition of linear generality is manifestly Zariski open, so it suffices to construct one collection of points \(x_1,\ldots,x_m\in C'\) with the desired property. This can be done inductively as follows. At the $j$-th step, choose \(x_j\in C'\) in such a way that it does not lie in the linear span of any \(r\) of the previously chosen points in \(\bP^r\), or any \(r-s_i-1\) of the previously chosen points in \(\bP^{r-s_i-1}\), for \( i=1,\ldots, \ell\). This is possible exactly because the image of \(f'\) is not contained in any hyperplane in \(\bP^r\) or \(\bP^{r-s_i-1}\).
\end{proof}

\subsection{(L) and (R)} \label{sec:inductive_easy}

\begin{proposition}[Proposition \ref{prop:inductive_steps_main}(L)] \label{prop:L}
 If \(\Ex(g-1,d-1,k)\) holds, then \(\Ex(g,d,k)\) holds.
\end{proposition}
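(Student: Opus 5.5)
We apply Lemma~\ref{lem:attach_and_smooth} with $m=2$ and $R\cong\bP^1$ mapping to a line. Start with a nice map $f'\colon C'\to X$ of degree $(d-1,k)$ out of a general curve of genus $g-1$, as provided by $\Ex(g-1,d-1,k)$. Choose general points $x_1,x_2\in C'$. By Lemma~\ref{lem:curve_linearly_general} (with $m=2\le r-s$), the line $L=\langle x_1,x_2\rangle\subset\bP^r$ does not meet the blown-up center $\bP^s$. Its strict transform $\widetilde L\subset X$ is therefore an isomorphic copy of $L$, disjoint from $E$, and its class has degree $(1,0)$. Let $h\colon R=\bP^1\to \widetilde L\subset X$ be an isomorphism, and mark the two preimages of $x_1,x_2$. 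Gluing as in Remark~\ref{rem:make_nodal_curve} gives $f\colon C=C'\cup R\to X$ of degree $(d,k)$ and arithmetic genus $(g-1)+2-1=g$.

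By Lemma~\ref{lem:attach_and_smooth}, it remains to check that $H^1(R,h^*T_X(-x_1-x_2))=0$. This amounts to showing that every summand of $h^*T_X$ has degree $\ge 1$. Since $\widetilde L$ avoids $E$, the blow-up map $b$ is an isomorphism near $\widetilde L$. Hence $h^*T_X\cong T_{\bP^r}|_L\cong\cO(2)\oplus\cO(1)^{r-1}$. Twisting by $\cO(-2)$ gives $\cO\oplus\cO(-1)^{r-1}$, which has vanishing $H^1$ on $\bP^1$.

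Lemma~\ref{lem:attach_and_smooth} then yields a nice map of degree $(d,k)$ out of a smooth curve of genus $g$, which smooths to a general curve. Validity of $(g,d,k)$ follows from that of $(g-1,d-1,k)$ by Remark~\ref{rem:dimension_effects}; in fact it is also forced by niceness, since non-degeneracy is preserved under general deformation. The main point to verify is that the line misses $\bP^s$, which is exactly where the generality of $x_1,x_2$ and the non-degeneracy of $f'$ are used. No real obstacle is expected.
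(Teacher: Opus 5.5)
Your proposal is correct and follows the paper's proof essentially verbatim: you attach a line through two general points of $C'$, use Lemma~\ref{lem:curve_linearly_general} to see it misses $\bP^s$, and use the splitting $h^*T_X\cong\cO(2)\oplus\cO(1)^{\oplus r-1}$ to verify the vanishing $H^1\bigl(R,h^*T_X(-x_1-x_2)\bigr)=0$ required by Lemma~\ref{lem:attach_and_smooth}. Your explicit checks that $m=2\le r-s$ and that validity of $(g,d,k)$ follows from validity of $(g-1,d-1,k)$ are details the paper leaves implicit, and both are right.
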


\begin{proof}
Let \(f'\colon C'\to X\) be a nice map of degree \((d-1,k)\), where the genus of \(C'\) is \(g-1\). Let \(x_1,x_2\in C'\) be general points. By Lemma \ref{lem:curve_linearly_general}, the unique line through \(x_1,x_2\) in \(\bP^r\) does not meet the blow-up center \(\bP^s\subset \bP^r\). Therefore, we may take \(h\colon R=\bP^1\to X\) to be the inclusion of this line passing through \(x_1,x_2\) in \(X\), of degree \((1,0)\).

The restricted tangent bundle \(h^*T_X\) is the same as the restricted tangent bundle of the inclusion of a line \(h\colon \bP^1\to \bP^r\), which is
\[
h^*T_{\bP^r}\simeq \cO(1)^{\oplus r-1}\oplus \cO(2),
\]
see e.g. \cite[Proposition 3.1]{larson} or Lemma \ref{lem:tangent_degenerate_curve} below. Therefore, taking \(C=C'\cup \bP^1\) to be the curve obtained by gluing \(C'\) and \(\bP^1\) at nodes \(x_1,x_2\), the maps \(f'\), \(h\) together induce a map \(f\colon C\to X\), satisfying the hypothesis of Lemma \ref{lem:attach_and_smooth}. The conclusion follows.
\end{proof}

\begin{proposition}[Proposition \ref{prop:inductive_steps_main}(R)] \label{prop:R}
If \(\Ex(g-(r-s),d-(r-s),k-1)\) holds, then \(\Ex(g,d,k)\) holds.
\end{proposition}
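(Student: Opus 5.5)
Following the pattern of Proposition \ref{prop:L}, I start from a nice map \(f'\colon C'\to X\) of degree \((d-(r-s),k-1)\) out of a general curve \(C'\) of genus \(g-(r-s)\). I will attach a single rational curve \(R\cong\bP^1\) at \(m=r-s+1\) general points \(x_1,\ldots,x_{r-s+1}\in C'\). The resulting curve has genus \(g-(r-s)+(r-s+1)-1=g\). The map \(h\colon R\to X\) will have degree \((r-s,1)\). By Lemma \ref{lem:attach_and_smooth}, it then suffices to check that
\[
H^1\bigl(R,h^*T_X(-x_1-\cdots-x_{r-s+1})\bigr)=0.
\]

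\emph{Construction of \(h\).} By Lemma \ref{lem:curve_linearly_general}, the span \(\Lambda=\langle x_1,\ldots,x_{r-s+1}\rangle\cong\bP^{r-s}\) meets \(\bP^s\) in a single point \(y\), and no \(r-s\) of the \(x_j\) span a space meeting \(\bP^s\). So \(\Lambda\) contains \(r-s+2\) points \(x_1,\ldots,x_{r-s+1},y\) in linearly general position. By the classical fact, there is a unique rational normal curve of degree \(r-s\) in \(\Lambda\) through all of them. Let \(h\) be its strict transform in \(X\). It meets the exceptional divisor \(E\) once, transversally at the point over \(y\), so \(h\) has degree \((r-s,1)\). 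Moreover, \(\pi\circ h\) maps \(R\) isomorphically onto a line in \(\bP^{r-s-1}\): projecting \(\Lambda\) from \(y\) lowers the degree by one to \(r-s-1\), and the image spans only the \((r-s-1)\)-dimensional \(\pi(\Lambda)\). (Depending on conventions, this is a rational normal curve of degree \(r-s-1\) in \(\pi(\Lambda)=\bP^{r-s-1}\).)

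\emph{Splitting type.} This is the main step. I will use the exact sequence
\[
0\to T_{X/\bP^{r-s-1}}\to T_X\to \pi^*T_{\bP^{r-s-1}}\to 0,
\]
or equivalently view \(X\) as the projective bundle \(\bP(\cO^{s+1}\oplus\cO(1))\) over \(\bP^{r-s-1}\).

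First I compute the pieces. Set \(e=r-s-1\), the degree of \(\pi\circ h\). The quotient \(h^*\pi^*T_{\bP^{e}}\) is the restricted tangent bundle of a rational normal curve of degree \(e\) in \(\bP^e\), which is balanced: \(\cO(e+1)^{\oplus e}\). The relative tangent bundle restricts to \(\cO(1)\otimes\cO(-1)^{\ldots}\)-type twists of a quotient of \(\cO^{s+1}\oplus\cO(1)\) pulled back. Concretely, \(h^*T_{X/\bP^e}\) has rank \(s+1\) and degree \(\deg h^*(-K_X)-e(e+1)\). Here \(-K_X=(r+1)H-(r-s-1)E\), so \(\deg h^*(-K_X)=(r+1)(r-s)-(r-s-1)\).

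Because \(h\) is a section-like curve in \(\Lambda\), I expect \(h^*T_{X/\bP^e}\) to be a direct sum of line bundles of degree at least \(r-s\). The fiber direction corresponds to moving within fibers \(\bP^{s+1}\) spanned by \(\bP^s\) and a point, and the curve's coordinates in those directions have degree \(r-s\).

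It then suffices that every summand of \(h^*T_X\) has degree \(\ge r-s-1\): twisting by \(-(r-s+1)\) points leaves degrees \(\ge -2+\ldots\), and I need them \(\ge -1\). The quotient twists to \(\cO(e+1-(r-s+1))=\cO(-1)\), which is fine. For the sub, degrees \(\ge r-s\) twist to \(\ge -1\). An extension of bundles all of degree \(\ge -1\) on \(\bP^1\) has \(H^1=0\) by the long exact sequence, so no splitting of the extension is needed.

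The obstacle is justifying the lower bound on the summands of \(h^*T_{X/\bP^e}\). I would do it directly: \(h^*T_{X/\bP^e}\cong \mathcal{H}om(\cO(-1)\text{-tautological},\,Q)\). Here the tautological subbundle along \(R\) is the line spanned by the lift of \(h\), an \(\cO(-(r-s))\)-type line inside the pullback of \(\cO^{s+1}\oplus\cO_{\bP^e}(1)\). So \(h^*T_{X/\bP^e}\) is a quotient of \(\cO(r-s)^{\oplus s+1}\oplus\cO(r-s+e)\)-type bundles twisted appropriately. As a quotient of bundles with all degrees \(\ge r-s\), all its summands have degree \(\ge r-s\). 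Alternatively, if this bookkeeping misbehaves, I would degenerate \(R\) to a chain of a line through \(y\) and lines, and use Lemma \ref{lem:tangent_degenerate_curve}-style arguments, deferring to that lemma.

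Finally, Lemma \ref{lem:attach_and_smooth} gives \(\Ex(g-(r-s)+(r-s+1)-1,\ d-(r-s)+(r-s),\ k-1+1)=\Ex(g,d,k)\).
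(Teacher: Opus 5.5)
\textbf{Verdict: genuine gap.} The construction is right, but the key positivity step for the tangent bundle is justified by a false claim.

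Your overall strategy matches the paper's. You attach the proper transform of a rational normal curve $R\subset\Lambda=\langle x_1,\ldots,x_{r-s+1}\rangle$ through the $x_j$ and the point $y=\Lambda\cap\bP^s$, then apply Lemma~\ref{lem:attach_and_smooth}. There are two small slips:
\begin{itemize}
\item Through $r-s+2$ points in linearly general position in $\bP^{r-s}$ there is an $(r-s-1)$-dimensional family of rational normal curves, not a unique one. Existence is all you need, so this is harmless.
\item $\pi\circ h$ is a rational normal curve of degree $r-s-1$, not a line. Your computation already uses the correct version.
\end{itemize}

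The gap is in the step you yourself flag as the main obstacle: the lower bound on the summands of $h^*T_{X/\bP^{r-s-1}}$. Write $X=\bP\bigl(\cO^{\oplus s+1}\oplus\cO_{\bP^{r-s-1}}(-1)\bigr)$ as a bundle of lines, with $H=b^*\cO_{\bP^r}(1)$ and tautological line bundle $\cO_X(-1)=H^{-1}$. The tautological line sits in the pullback of $\cO^{\oplus s+1}\oplus\cO(-1)$, not of $\cO^{\oplus s+1}\oplus\cO(1)$. The relative Euler sequence therefore presents $h^*T_{X/\bP^{r-s-1}}$ as the quotient by $\cO$ of
\[
h^*\bigl(H^{\oplus s+1}\oplus (H\otimes\pi^*\cO(-1))\bigr)=h^*\bigl(H^{\oplus s+1}\oplus\cO(E)\bigr)\cong\cO(r-s)^{\oplus s+1}\oplus\cO(1).
\]
So the extra summand has degree $1$, not $r-s+e$. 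Your own degree count already detects this:
\begin{itemize}
\item a quotient of $\cO(r-s)^{\oplus s+1}\oplus\cO(r-s+e)$ by $\cO$ would have degree $(r-s)(s+2)+e$;
\item but $\deg h^*T_{X/\bP^{r-s-1}}=(r-s)(s+1)+1$.
\end{itemize}
A quotient of $\cO(r-s)^{\oplus s+1}\oplus\cO(1)$ is a priori only guaranteed to have summands of degree $\geq 1$. After twisting by $-(r-s+1)$ points this gives no $H^1$-vanishing, so the ``quotient of positive bundles'' argument fails. The fallback degeneration you mention is too vague to fill the hole.

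To repair your route, use the actual shape of the tautological section. Its component in $h^*\cO(E)\cong\cO(1)$ is the restriction of the canonical section of $\cO(E)$, which is nonzero and vanishes simply at the one point of $R$ over $y$. Hence the composite $\cO(r-s)^{\oplus s+1}\to h^*T_{X/\bP^{r-s-1}}$ is injective with cokernel of length one. This forces $h^*T_{X/\bP^{r-s-1}}\cong\cO(r-s)^{\oplus s}\oplus\cO(r-s+1)$, and then your extension argument gives the needed vanishing.

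The paper avoids the rank-$(s+1)$ relative bundle altogether. It restricts to $\widetilde{\Lambda}=\Bl_y\Lambda\subset X$:
\begin{itemize}
\item the normal bundle of $\widetilde{\Lambda}$ along $R$ is $\cO(r-s)^{\oplus s}$, since $\widetilde{\Lambda}$ is cut out by $s$ hyperplanes;
\item the relative tangent bundle of $\widetilde{\Lambda}$ over $\bP^{r-s-1}$ is a line bundle, so its degree $r-s+1$ follows from a degree count alone.
\end{itemize}
Vanishing of the relevant $\operatorname{Ext}^1$ groups then yields $h^*T_X\cong\cO(r-s)^{\oplus r-1}\oplus\cO(r-s+1)$ (Lemma~\ref{lem:tangent_r-s}).
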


\begin{proof}
Let \(f'\colon C'\to X\) be a nice map of degree \((d-(r-s),k-1)\), where the genus of \(C'\) is \(g-(r-s)\). Let \(x_1,\dots,x_{r-s+1}\in C'\) be general points. By Lemma \ref{lem:curve_linearly_general}, the linear span \(\Lambda\cong \bP^{r-s}\subset \bP^r\) of \(x_1,\dots,x_{r-s+1}\) meets the blow-up center \(\bP^s\subset \bP^r\) in a unique point \(q\). Moreover, the points \(x_1,\dots,x_{r-s+1},q\in \Lambda\) are themselves in linearly general position, or else the linear span of a strict subset of \(x_1,\dots,x_{r-s+1}\) would meet \(\bP^s\) non-trivially, contradicting Lemma \ref{lem:curve_linearly_general}.

Therefore, there exists an embedded rational normal curve \(R\subset \Lambda\), passing through \(x_1,\dots,x_{r-s+1},q\). Moreover, \(R\) meets \(\bP^s\subset \bP^r\) only in the point \(q\), because \(\Lambda\cap \bP^s=\{q\}\). It follows that \(R\subset \Lambda\) lifts to a map \(h\colon R=\bP^1\to X\) of degree \((r-s,1)\), whose image contains \(x_1,\dots,x_{r-s+1}\). Furthermore, we have
\[
h^*T_X\cong \cO(r-s)^{\oplus r-1}\oplus \cO(r-s+1),
\]
by Lemma \ref{lem:tangent_r-s} below. We now conclude by Lemma \ref{lem:attach_and_smooth}.
\end{proof}

\begin{lemma} \label{lem:tangent_r-s}
Let \(h\colon \bP^1\to X\) be a map of degree \((r-s,1)\), given by the proper transform of a rational normal curve \(R\subset \Lambda\cong \bP^{r-s}\subset \bP^r\), where \(\Lambda\cap \bP^s=\{q\}\). Then,
\[
h^*T_X\cong \cO(r-s)^{\oplus r-1}\oplus \cO(r-s+1).
\]
Furthermore, let \(x\in \bP^1\) be any point not mapping to the exceptional divisor of $X$. Then, the fiber of the summand \(\cO(r-s+1)\subset h^*T_X\) at $x$ corresponds to the tangent direction at \(x\in \bP^r\) pointing toward \(q\).
\end{lemma}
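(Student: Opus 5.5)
The plan is to compute \(h^*T_X\) through the projection \(\pi\colon X\to\bP^{r-s-1}\) rather than through the blow-down \(b\). Restricting to the smooth rational curve \(R\) the relative tangent sequence gives
\[
0\to h^*T_\pi \to h^*T_X \to (\pi\circ h)^*T_{\bP^{r-s-1}}\to 0 .
\]
I will show that the quotient is \(\cO(r-s)^{\oplus r-s-1}\), that the sub is \(\cO(r-s)^{\oplus s}\oplus\cO(r-s+1)\), and that the sequence splits.

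\emph{The quotient.} The composite \(\pi\circ h\) is the projection of the rational normal curve \(R\subset\Lambda\) from the point \(q\in R\). This is a rational normal curve of degree \(r-s-1\) in \(\bP^{r-s-1}\). Here I use that projection from \(\Lambda\cap\bP^s=\{q\}\) identifies \(\Lambda\) with a cone over \(\bP^{r-s-1}\) with vertex \(q\). The restricted tangent bundle of a rational normal curve of degree \(e\) in \(\bP^e\) is \(\cO(e+1)^{\oplus e}\) (via the Euler sequence, or \cite[Proposition 3.1]{larson}). So the quotient is \(\cO(r-s)^{\oplus r-s-1}\).

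\emph{The relative tangent bundle.} Write \(X=\bP(F)\) over \(\bP^{r-s-1}\), where the fiber of \(F\) over a point \(y\) is the \((s+2)\)-dimensional subspace of \(\bK^{r+1}\) spanned by \(\bP^s\) and the point lying over \(y\). Thus \(F\cong \cO^{\oplus s+1}\oplus\cO(-1)\), and the \(\bP^{s+1}\)-fibers are the spans \(\langle \bP^s,y\rangle\). The relative Euler sequence gives \(T_\pi=\mathcal{H}om(\ell,F/\ell)\), with \(\ell\) the tautological subbundle. Along \(R\) we have \(\ell|_R=h^*\cO_{\bP^r}(-1)=\cO(-(r-s))\) and \(F|_R=\cO^{\oplus s+1}\oplus\cO(-(r-s-1))\).

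The composite \(\ell\to F\to\cO(-(r-s-1))\) is a map of degree \(1\). It vanishes exactly at \(q\), which is the unique point of \(R\) on \(\bP^s\), with multiplicity one by transversality of \(\Lambda\) and \(\bP^s\). Hence \(\cO^{\oplus s+1}\to F|_R/\ell\) is injective with cokernel a length-one skyscraper at \(q\). So \(F|_R/\ell\) is an elementary modification of the trivial bundle \(\cO^{\oplus s+1}\), which is \(\cO^{\oplus s}\oplus\cO(1)\). The \(\cO(1)\) summand is the saturation of the trivial line subbundle spanned by the vector representing \(q\in\bP^s\). Tensoring with \(\ell^\vee=\cO(r-s)\) gives \(h^*T_\pi\cong\cO(r-s)^{\oplus s}\oplus\cO(r-s+1)\).

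\emph{Splitting and the fiber statement.} The extension class lies in \(H^1(\bP^1,h^*T_\pi(-(r-s)))\), and every summand of this bundle has degree \(\ge 0\), so it vanishes. Therefore
\[
h^*T_X\cong\cO(r-s)^{\oplus r-1}\oplus\cO(r-s+1),
\]
which is consistent with the degree check \(-K_X\cdot R=(r+1)(r-s)-(r-s-1)=r(r-s)+1\).

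For the second assertion, the \(\cO(r-s+1)\) summand is the unique subbundle of maximal degree. By the construction above, its fiber at a point \(x\notin E\) is \(\mathrm{Hom}(\ell_x,\langle \ell_x,\tilde q\rangle/\ell_x)\), where \(\tilde q\) is a vector representing \(q\). This is the tangent direction at \(x\) in \(\bP^r\) along the line \(\overline{xq}\), as claimed.

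The main obstacle is the elementary-modification step. It requires care with two points: that the vanishing at \(q\) is simple, and that the modified direction is precisely \(q\) rather than some other point of \(\bP^s\). Both follow from \(\Lambda\cap\bP^s=\{q\}\) being a transverse intersection and from \(R\) being smooth at \(q\).
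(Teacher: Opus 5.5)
Your proof is correct, but it decomposes $h^*T_X$ differently from the paper.

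The paper first passes to the proper transform $\widetilde{\Lambda}\cong\Bl_q(\bP^{r-s})\subset X$. It splits off $N_{\widetilde{\Lambda}/X}|_R\cong\cO(r-s)^{\oplus s}$, viewing $\widetilde{\Lambda}$ as cut out by proper transforms of $s$ hyperplanes. It then studies the $\bP^1$-bundle $\widetilde{\Lambda}\to\bP^{r-s-1}$, whose relative tangent \emph{line} bundle along $R$ is forced to be $\cO(r-s+1)$ by comparing degrees. The fiber statement is then immediate, because the fibers of $\widetilde{\Lambda}\to\bP^{r-s-1}$ are lines through $q$.

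You instead use the relative tangent sequence of the full $\bP^{s+1}$-bundle $\pi\colon X\to\bP^{r-s-1}$. You therefore have to compute a rank $s+1$ relative tangent bundle, which you do directly via the relative Euler sequence and the elementary modification $\cO^{\oplus s+1}\hookrightarrow F|_R/\ell$ at the point over $q$. The two pictures match: the paper's relative line is your unique $\cO(r-s+1)\subset h^*T_\pi$, and its normal bundle is isomorphic to your quotient $h^*T_\pi/\cO(r-s+1)$.

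The trade-offs are as follows. The paper's route is shorter, since rank one plus a degree count replaces your modification argument. Yours avoids the auxiliary subvariety $\widetilde{\Lambda}$ and its hyperplane equations, and it derives the degree of the special summand rather than inferring it by comparison. It also identifies that summand explicitly as the saturation of the line spanned by $\tilde q$. You further make explicit that this summand is canonical, being the unique line subbundle of maximal degree, which the paper leaves implicit.

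One small simplification: you flag the simplicity of the zero of $\ell\to\cO(-(r-s-1))$ as needing transversality. It does not, since that map is a nonzero section of $\cO_{\bP^1}(1)$.
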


\begin{proof}
Let \(\widetilde{\Lambda}\cong \Bl_q(\bP^{r-s})\subset X\) be the proper transform of \(\Lambda\subset \bP^r\). Then, we have a short exact sequence
\[
0\to T_{\widetilde{\Lambda}}|_R\to h^*T_X\to N_{\widetilde{\Lambda}/X}|_R\to 0.
\]
Because \(\widetilde{\Lambda}\subset X\) is cut out by the proper transforms of \(s\) general hyperplanes in \(\bP^r\), we have
\[
N_{\widetilde{\Lambda}/X}|_R\cong \cO_{\bP^r}(1)^{\oplus s}|_R\cong \cO(r-s)^{\oplus s}.
\]

Next, let \(\pi\colon \widetilde{\Lambda}\to \bP^{r-s-1}\) be the projection. Then, we have a short exact sequence
\[
0\to T_{\widetilde{\Lambda}/\bP^{r-s-1}}|_R\to T_{\widetilde{\Lambda}}|_R\to T_{\bP^{r-s-1}}|_R\to 0.
\]
The image of \(R\) in \(\bP^{r-s-1}\) is a rational normal curve, and any two rational normal curves are projectively equivalent. We conclude (e.g. by \cite[Proposition 3.2]{larson}) that
\[
T_{\bP^{r-s-1}}|_R\cong \cO(r-s)^{\oplus r-s-1}.
\]
By comparing with the degree of \(T_{\widetilde{\Lambda}}|_R\), we deduce that \(T_{\widetilde{\Lambda}/\bP^{r-s-1}}|_R\cong \cO(r-s+1)\). Therefore, we have
\[
\operatorname{Ext}^1(T_{\bP^{r-s-1}}|_R,T_{\widetilde{\Lambda}/\bP^{r-s-1}}|_R)=0
\Rightarrow
T_{\widetilde{\Lambda}}|_R\cong \cO(r-s)^{\oplus r-s-1}\oplus \cO(r-s+1).
\]
We have in turn that
\[
\operatorname{Ext}^1(N_{\widetilde{\Lambda}/X}|_R,T_{\widetilde{\Lambda}}|_R)=0
\Rightarrow
h^*T_X\cong \cO(r-s)^{\oplus r-1}\oplus \cO(r-s+1),
\]
as needed.

Finally, the summand \(\cO(r-s+1)\) is precisely the summand coming from the relative tangent bundle \(T_{\widetilde{\Lambda}/\bP^{r-s-1}}|_R\), which is naturally identified with the bundle of tangent directions pointing toward \(q\).
\end{proof}

\subsection{(\(T_\ell\)): Desiderata}
\label{sec:tree_desiderata}

In the remainder of this section, we prove Proposition \ref{prop:inductive_steps_main}$(T_\ell)$. As in the cases \((L)\) and \((R)\), the strategy is to choose \(m=r+2+\ell(r-s-1)\) general points \(x_1,\dots,x_m\in C'\), and find a rational curve \(h\colon R\to X\) of degree \((r+\ell(r-s-1),\ell)\), passing through the \(x_j\), for which
\[
H^1(R,h^*T_X(-x_1-\cdots-x_m))=0.
\]
If the images of the \(x_j\) in \(X\) were general points, then \cite[Theorem 1.5.1]{cl_interpolation} would prove the existence of such a rational curve.\footnote{In fact, the ``Tevelev degree'' enumerating such maps \(h\), which are in addition required to pass through the \(x_i\in X\) at \emph{prescribed} points \(p_i\in R\), is equal to \(\binom{s+1}{\ell}\), see \cite[Theorem 1.5.2]{cl_interpolation} and \cite[\S 4.3]{lsakran}.}

However, the points \(x_j\in C'\) being required to lie in the image of a curve, existence results for rational curves passing through general points of \(X\) do not immediately apply. We will circumvent the issue by constructing instead a \emph{reducible} curve \(R\) with the needed properties. The advantage will be that, if the degree of \(h\) upon restriction to each component of \(R\) is small, then we will be able to control the restricted tangent bundle \(h^*T_X\). The precise shape of the reducible rational curve we will construct is given below.

The edge case \(\ell=0\) is exceptional, and easier, so we will assume for now that \(\ell>0\) and defer the remaining case to the end.

\begin{situation}
\label{sit:curve_many_points}
Let \(C'\) be a smooth curve, and let \(f'\colon C' \to X\) be a non-degenerate map, fixed throughout.

Let \(\ell\) be an integer with \(1 \leq \ell \leq s+1\). Let \(R=R_0\cup\cdots\cup R_\ell\) be a nodal chain of rational curves, with \(R_i\cong \bP^1\) and \(y_i:=R_i\cap R_{i+1}\). Fix distinct points:
\begin{itemize}
\item \(p_{ij}\in R_i\), for \(1\leq i\leq \ell-1\) and \(1\leq j\leq r-s\), and
\item \(p_{\ell j}\in R_\ell\), for \(1\leq j\leq r-s+1\).
\end{itemize}
\end{situation}

\begin{proposition}
\label{prop:tree_exists}
In Situation~\ref{sit:curve_many_points}, there exist points \(p_{01},\ldots,p_{0,r-\ell+1}\in R_0\), points \(x_{ij}\in C'\) (identified with their images in \(X\)), and a map \(h\colon R\to X\), with the following properties.
\begin{enumerate}
\item \(h(p_{ij})=x_{ij}\) for all \(i,j\),
\item \(\deg(h^0)=(r-\ell,0)\) and \(\deg(h^i)=(r-s,1)\) for all \(i\geq 1\), where \(h^i\) is the restriction of \(h\) to \(R_i\), and
\item \(H^1(R,h^*T_X(-\sum p_{ij}))=0\).
\end{enumerate}
\end{proposition}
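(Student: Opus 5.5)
The plan is to build the chain from the $R_\ell$ end, making each component a curve of the type used in Propositions~\ref{prop:L} and \ref{prop:R}, so that its restricted tangent bundle is known. Then $H^1$ is checked by peeling components off the chain one at a time.

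\emph{Construction of the components $R_\ell,\dots,R_1$.} Choose $r-s+1$ general points $x_{\ell 1},\dots,x_{\ell,r-s+1}\in C'$. Let $\Lambda_\ell=\langle x_{\ell j}\rangle\cong\bP^{r-s}$. By Lemma~\ref{lem:curve_linearly_general}, $\Lambda_\ell\cap\bP^s$ is a single point $q_\ell$, and the $x_{\ell j}$ together with $q_\ell$ are in linearly general position in $\Lambda_\ell$. Rational normal curves in $\Lambda_\ell$ through these $r-s+2$ points, with the $r-s+1$ parameters $p_{\ell j}$ prescribed, form a positive-dimensional family. Take $h^\ell$ to be the proper transform of one of them; it has degree $(r-s,1)$ and is described by Lemma~\ref{lem:tangent_r-s}. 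Next pick a point $y_{\ell-1}\in R_\ell$, still to be adjusted, and $r-s$ general points $x_{\ell-1,j}\in C'$. Applying Lemma~\ref{lem:curve_linearly_general} with $\Pi$ ranging over $\bP^s$ and the finitely many spans already constructed, $\Lambda_{\ell-1}=\langle y_{\ell-1},x_{\ell-1,j}\rangle$ again meets $\bP^s$ in one point $q_{\ell-1}$, in general position. So we again obtain a degree $(r-s,1)$ curve $R_{\ell-1}$ through the prescribed points, and we iterate down to $R_1$.

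\emph{The last component $R_0$.} Here $r-\ell+1$ points $x_{0j}\in C'$ span $\Lambda_0\cong\bP^{r-\ell}$. A degree-$(r-\ell,0)$ rational normal curve through them and $y_0$ forces $y_0\in\Lambda_0\cap R_1$, which is a codimension-$\ell$ condition. The freedom available is the choice of each node $y_i$ along its component, the moving point on each $R_i$ family, and the choice of the $x$'s. So the locus swept by the possible endpoints $y_0$ should have dimension at least $\ell$. I would show this by a dimension count on the incidence variety, and deduce that $\Lambda_0$ can be met with everything in general position. We must also make $R_0$ avoid $\Lambda_0\cap\bP^s$, which has dimension $s-\ell$; since a general rational normal curve through the given points misses any fixed proper subvariety, this should be automatic.

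This matching step is the first of the two genuine difficulties. An alternative I would try if the count is awkward is to fix $R_0$ and the $x_{0j}$ first, then choose $y_0\in R_0$ and build $R_1,\dots,R_\ell$ outward. In that case the final requirement that $R_\ell$ pass through $r-s+1$ points of $C'$ becomes the closing condition instead.

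\emph{Cohomology vanishing.} Peel from the $R_0$ end. By the computation in Lemma~\ref{lem:tangent_r-s} and its analogue for rational normal curves, $h^{0*}T_X\cong\cO(r-\ell+1)^{r-\ell}\oplus\cO(r-\ell)^{\ell}$. Twisting by $-\sum_j p_{0j}$ leaves degrees $0,-1$, so $H^1=0$ and the sections surject onto the fiber at $y_0$. For $1\le i\le\ell-1$, $h^{i*}T_X(-\sum_j p_{ij}-y_{i-1})$ has degrees $-1$ and $0$, so its $H^1$ also vanishes. The problem is $R_\ell$. With $r-s+2$ points removed, the naive twist has $H^1\neq0$ on the $\cO(r-s)$ summands. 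So one must use the node more carefully, via the normalization sequence of Lemma~\ref{lem:attach_and_smooth}. The image of $H^0$ of the rest of the chain in $(T_X)_{y_{\ell-1}}$ must hit the $(r-1)$-dimensional complement of the $\cO(r-s+1)$ summand of Lemma~\ref{lem:tangent_r-s}, i.e.\ the complement of the direction toward $q_\ell$. I would get this from the transversality of $R_{\ell-1}$ to $R_\ell$, i.e.\ to $\Lambda_\ell$, combined with the known $H^0$ of $R_{\ell-1}$. I expect this node-direction analysis, together with the dimension count for $R_0$, to be the main obstacle.
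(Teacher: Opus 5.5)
There is a genuine gap. The two steps you defer as ``the main obstacle'' carry the whole proof, and the missing ingredient is Lemma~\ref{lem:family_of_maps}. It is not enough that the possible images of the free node form an $\ell$-dimensional locus. What the paper uses is this: if the points $x_{ij}$ are chosen batch by batch, the family of partial chains $R_i\cup\cdots\cup R_\ell$ satisfying all incidences maps isomorphically, scheme-theoretically via evaluation at $y_{i-1}$, onto $\Pi_{i-1}\setminus B_{i-1}$. Here $\Pi_{i-1}$ is a \emph{linear} space of dimension $\ell-i+1$ lying in a fiber of $\pi$, and $B_{i-1}$ is a finite union of linear subspaces.

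Linearity is what lets Lemma~\ref{lem:curve_linearly_general} do the work, since that lemma only controls position relative to linear spaces. Choosing the $x_{0j}$ in linearly general position with respect to $\Pi_0$ and $B_0$ gives $\Lambda_0\cap\Pi_0=\{x\}$, transversally, with $x\notin B_0$. A dimension count does not give this. For $\ell\ge 2$, the spans $\Lambda_0$ of $r-\ell+1$ points of $C'$ form a family of dimension at most $r-\ell+1$, far below $\dim\Gr(r-\ell+1,r+1)=(r-\ell+1)\ell$. So no genericity argument produces a reduced intersection point away from the degenerate chains.

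The same problem appears earlier in your construction. You pick the $x_{\ell-1,j}$ generically relative to one fixed image point $h^\ell(y_{\ell-1})$ and then want to move $h^\ell$. What you actually need is general position relative to the whole line $\Pi_{\ell-1}$ and to $B_{\ell-1}$. Also, the nodes $y_i$ are fixed in Situation~\ref{sit:curve_many_points}, so they are not extra freedom.

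For the vanishing, your node criterion at $y_{\ell-1}$ is set up correctly. However, the image of $H^0\bigl(R_0\cup\cdots\cup R_{\ell-1},h^*T_X(-\sum p_{ij})\bigr)$ in $(T_X)_{h(y_{\ell-1})}$ depends on the whole subchain, in particular on $\Lambda_0$. Transversality of $R_{\ell-1}$ to $\Lambda_\ell$ therefore cannot establish it.

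Your claim that sections on $R_0$ surject onto $(T_X)_{h(y_0)}$ is false for $\ell\ge 1$. Sections of $\cO^{r-\ell}\oplus\cO(-1)^{\ell}$ evaluate only onto $T_{\Lambda_0}$. That restriction is exactly what the paper exploits:
\begin{itemize}
\item Since $\chi(R,h^*T_X(-\sum p_{ij}))=0$, it suffices to show $H^0=0$.
\item A first-order deformation moves $h(y_0)$ within $\Lambda_0$ on the $R_0$ side, and within $\Pi_0$ on the chain side by Lemma~\ref{lem:family_of_maps}(c) applied inductively. By transversality, it moves trivially.
\item Part (c) then propagates triviality up the chain.
\item Lemma~\ref{lem:tangent_degenerate_curve} shows that $h^0$, with $r-\ell+2$ points fixed, also deforms trivially.
\end{itemize}
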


Note that it would have sufficed to show that \emph{there exist} points \(p_{ij}\in R_i\), for \(1\leq i\leq \ell\) as in Situation~\ref{sit:curve_many_points}, but the key idea of our construction is to fix these points initially in order to gain better control on the eventual map \(h\). The desired condition \(H^1(R,h^*T_X(-\sum p_{ij}))=0\) implies, in any case, that one should expect \(h\) to exist given any general choices of fixed points \(p_{ij}\in R\) and \(x_{ij}\in C'\).

The strategy of the construction is as follows. Begin by choosing points \(x_{\ell 1},\ldots,x_{\ell,r-s+1}\in X\) in linearly general position. Then, there is a 1-dimensional family \(\cF_\ell\) of maps \(h^\ell\colon R_\ell\to X\) of degree \((r-s,1)\) with the property that \(h^\ell(p_{\ell j})=x_{\ell j}\) for \(j=1,\ldots,r-s+1\). Varying over \(\cF_\ell\), we will see that the point \(h^\ell(y_{\ell-1})\) sweeps out a subvariety \(\Pi_{\ell-1}\setminus B_{\ell-1}\subset X\), where \(\Pi_{\ell-1}\subset X\) is a linear space of dimension \(1\) and \(B_{\ell-1}\) is a finite union of points. Moreover, the point \(h^\ell(y_{\ell-1})\in \Pi_{\ell-1}\setminus B_{\ell-1}\) uniquely determines the map \(h^\ell\in \cF_\ell\).

Next, choose points \(x_{\ell-1,1},\ldots,x_{\ell-1,r-s}\in X\) in linearly general position with respect to \(\Pi_{\ell-1}\) and (every point in) \(B_{\ell-1}\). There is a 2-dimensional family \(\cF_{\ell-1}\) of maps \(h^{\ell-1}\colon R_{\ell-1}\to X\) of degree \((r-s,1)\) with the property that:
\begin{itemize}
\item \(h^{\ell-1}(p_{\ell-1,j})=x_{\ell-1,j}\) for \(j=1,\ldots,r-s\), and
\item \(h^{\ell-1}(y_{\ell-1})\in \Pi_{\ell-1}\setminus B_{\ell-1}\).
\end{itemize}
Then, varying over \(\cF_{\ell-1}\), the point \(h^{\ell-1}(y_{\ell-2})\) sweeps out a subvariety \(\Pi_{\ell-2}\setminus B_{\ell-2}\subset X\), where \(\Pi_{\ell-2}\subset X\) is a linear space with dimension \(2\), and \(B_{\ell-2}\) is a finite union of linear spaces of strictly smaller dimension. Once more, the point \(h^{\ell-1}(y_{\ell-2})\in \Pi_{\ell-2}\setminus B_{\ell-2}\) uniquely determines the map \(h^{\ell-1}\in \cF_{\ell-1}\).

We will continue inductively in this fashion, until reaching a family \(\cF_1\) of maps \(h^1\colon R_1\to X\), where \(h^1(y_0)\) sweeps out the complement \(\Pi_0\setminus B_0\) of a finite union \(B_0\) of proper linear subspaces of a linear subspace \(\Pi_0\subset X\) of dimension \(\ell\). Finally, choose \(x_{01},\ldots,x_{0,r-\ell+1}\in X\) in linearly general position with respect to \(\Pi_0\) and all linear subspaces comprising \(B_0\). In particular, there is a unique point \(x\in \Pi_0\setminus B_0\) in the span \(\Lambda_0\) of the points \(x_{0j}\). For general points \(p_{01},\ldots,p_{0,r-\ell+1}\in R_0\), there exists a unique map \(h^0\colon R_0\to X\) of degree \((r-\ell,0)\) with the property that \(h^0(p_{0j})=x_{0j}\) for \(j=1,\ldots,r-\ell+1\), and \(h^0(y_0)=x\). 

Now, there exists a unique map \(h^1\in \cF_1\) with \(h^1(y_0)=h^0(y_0)=x\in \Pi_0\setminus B_0\). From here, there is a unique map \(h^2\in \cF_2\) for which \(h^2(y_1)=h^1(y_1)\in \Pi_1\setminus B_1\). Continuing in this fashion, we uniquely determine a sequence of maps \(h^0,\ldots,h^\ell\), which we will show glue to a global map \(h\colon R\to X\) satisfying all of the needed properties.

\subsection{(\(T_\ell\)): Construction}
\label{sec:tree_construction}

The precise statement that allows us to carry out the construction described in the previous section is the following.

\begin{lemma}
\label{lem:family_of_maps}
Let \(\Pi\subset X\) be a linear subspace\footnote{By a linear subspace, we mean the proper transform of a linear subspace of \(\bP^r\).} of dimension \(m\leq s\), with the property that the image of \(\Pi\) under the projection \(\pi\colon X\to \bP^{r-s-1}\) is a point. Let \(B\subset \Pi\) be a finite union of proper linear subspaces. Let \(x_1,\ldots,x_{r-s}\in X\) be points in linearly general position with respect to \(\Pi\) and all linear subspaces comprising \(B\). Fix a rational curve \(\bP^1\), and let \(y',p_1,\ldots,p_{r-s},y\in \bP^1\) be general points.

Let \(\cF\) be the family of maps \(h\colon \bP^1\to X\) of degree \((r-s,1)\) satisfying the following properties:
\begin{itemize}
\item \(h(p_j)=x_j\) for \(j=1,\ldots,r-s\), and
\item \(h(y)\in \Pi\setminus B\).
\end{itemize}
Then, we have the following.
\begin{enumerate}
\item[(a)] \(\dim(\cF)=m+1\).
\item[(b)] Varying over all maps \(h\in \cF\), the points \(h(y')\in X\) sweep out a subvariety \(\Pi'\setminus B'\subset X\), where \(\Pi'\subset X\) is a linear subspace of dimension \(m+1\), whose image under \(\pi\) is a point, and \(B'\) is a finite union of proper linear subspaces.
\item[(c)] The map \(\cF\to \Pi'\setminus B'\) given by evaluation at \(y'\) is an isomorphism of schemes.
\end{enumerate}
\end{lemma}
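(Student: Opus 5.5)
The plan is to linearize the problem: fix the projection $\pi\circ h$, after which all remaining conditions on $h$ become linear. Write $n=r-s$, and choose coordinates on $\bP^r$ in which $\bP^s=\{z_0=\cdots=z_{n-1}=0\}$, so that $\pi$ is $[z_0:\cdots:z_{n-1}]$.

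\textbf{Step 1: the projection $\pi\circ h$ is rigid.} For any $h\in\cF$, the composite $\pi\circ h\colon\bP^1\to\bP^{n-1}$ has degree $(r-s)-1=n-1$. It sends $p_j\mapsto\pi(x_j)$ for $j=1,\ldots,n$, and sends $y$ to the point $z:=\pi(\Pi)$. Linear general position of the $x_j$ with respect to $\Pi$ means that $\pi(x_1),\ldots,\pi(x_n),z$ are in linearly general position in $\bP^{n-1}$. A degree $n-1$ map $\bP^1\to\bP^{n-1}$ taking $n+1$ prescribed general parameters to $n+1$ points in linearly general position exists and is unique. Call it $\varphi=(\varphi_0,\ldots,\varphi_{n-1})$. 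In particular $\pi(h(y'))=\varphi(y')=:z'$ is independent of $h$. Hence every $h(y')$ lies in the fiber $F'=\pi^{-1}(z')$, which is the proper transform of the $(s+1)$-plane $\langle\bP^s,z'\rangle$. This already gives "image under $\pi$ is a point" in (b).

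\textbf{Step 2: linear parametrization.} Every degree $(n,1)$ map with $\pi\circ h=\varphi$ can be written as
\[
h=[\,\lambda\varphi_0:\cdots:\lambda\varphi_{n-1}:\psi_0:\cdots:\psi_s\,],
\]
with $\lambda$ a linear form (vanishing at the preimage of the exceptional point $q$) and $\psi_k$ forms of degree $n$. The data $(\lambda,\psi)$ is determined up to a common scalar, once $\varphi$ is fixed up to scalar. Now translate the conditions.
\begin{itemize}
\item Since $\varphi(p_j)$ is already proportional to the first block of $x_j$, the condition $h(p_j)=x_j$ says that $\psi(p_j)$ equals a fixed multiple of $\lambda(p_j)$ times the last block of $x_j$. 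This is $s+1$ linear conditions for each $j$.
\item The condition $h(y)\in\Pi$ is $s+1-m$ linear conditions, because $\Pi$ has codimension $s+1-m$ in the fiber over $z$.
\end{itemize}
Counting unknowns $2+(s+1)(n+1)$ against conditions $(s+1)n+(s+1-m)$ gives an affine solution space $S$ of dimension at least $m+2$. Using genericity of the $p_j,y$ and linear generality of the $x_j$ relative to $\Pi$, I will check that the conditions are independent, so $\dim S=m+2$. Then $\cF$ is the open subset of $\bP(S)$ on which three things hold: $\lambda\not\equiv0$, the map has no base points and the correct degree, and $h(y)\notin B$. This gives (a), $\dim\cF=m+1$.

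\textbf{Step 3: evaluation at $y'$.} The evaluation $(\lambda,\psi)\mapsto(\lambda(y')\varphi(y'),\psi(y'))$ is a linear map $S\to\bK^{s+2}$, where $\bK^{s+2}$ is the coordinate space of $\langle\bP^s,z'\rangle$. I will show it is injective. An element of the kernel has $\lambda(y')=0$ and $\psi(y')=0$, and the goal is to show it must vanish. The way to do this is to reverse the roles of $y$ and $y'$: the point $h(y')$, together with the $x_j$, determines the plane $\Lambda$ and then the curve, by the same interpolation as in Lemma~\ref{lem:tangent_r-s}. Concretely, imposing vanishing at the general point $y'$ should cut the dimension of $S$ by exactly $s+2$ relative to a larger space, and genericity of $y'$ should make the restriction nondegenerate. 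Granting injectivity, $\bP(S)$ maps isomorphically onto a linear subspace $\Pi'\subset\langle\bP^s,z'\rangle$ of dimension $m+1$. The excluded locus of $\bP(S)$ is cut out by the linear conditions $\lambda\equiv0$, by degenerations where $\psi$ and $\lambda\varphi$ acquire a common zero, and by the preimages of the linear subspaces comprising $B$. Its image $B'$ is therefore a finite union of proper linear subspaces of $\Pi'$. This gives (b) and (c), and scheme-theoretic isomorphism is automatic since the map is the restriction of a linear isomorphism.

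\textbf{Main obstacle.} The main obstacle is Step 3 together with the independence claim in Step 2. Both the injectivity of evaluation at $y'$ and the verification that the bad locus is a union of \emph{linear} subspaces need the genericity hypotheses to be used carefully. The subtle part is the base-point locus, where $\lambda$ and $\psi$ share a root, since that is a priori not linear. The first thing I would try is to show that a common root can only occur at one of the fixed points $p_j,y$. At such a point the condition becomes linear, namely vanishing of the corresponding coordinate block. This would exhibit $B'$ as a union of images of coordinate-type linear subspaces of $\bP(S)$.
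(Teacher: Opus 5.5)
Your plan is the paper's proof written without coordinates. Fixing $\pi\circ h$ is the paper's determination of $u_1,\ldots,u_{r-s}$, and your space $S$ is its $(\alpha,\beta,\gamma_1,\ldots,\gamma_m)$ after normalizing $x_j=e_j$, $y=0$, $y'=\infty$; in those coordinates $\ev_\infty$ is visibly injective.

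The gap is Step 3. You assert that $\ev_{y'}$ is injective, but the mechanism you propose, genericity of $y'$, cannot supply this, because the kernel does not depend on $y'$.

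To see this, write the $p_j$-condition as $\psi(p_j)=c_j\lambda(p_j)w_j$, where $w_j$ is the last block of $x_j$. Let $\sigma_t$ denote a linear form vanishing at $t$.
\begin{enumerate}
\item[(i)] A kernel element has $\lambda(y')=0$ (since $\varphi(y')\neq 0$) and $\psi(y')=0$. Hence $\lambda=a\sigma_{y'}$ and $\psi=\sigma_{y'}\psi'$ with $\deg\psi'=r-s-1$.
\item[(ii)] Since $y'\neq p_j$, this forces $\psi'(p_j)=ac_jw_j$, i.e.\ $\psi'=a\Psi$. Here $[\varphi:\Psi]$ is the unique degree $r-s-1$ lift of $\varphi$ sending $p_j$ to $x_j$.
\item[(iii)] By Lagrange interpolation, this lift lies in the $(r-s-1)$-plane $\Lambda'=\langle x_1,\ldots,x_{r-s}\rangle$. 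That plane meets the $(s+1)$-plane $\langle\bP^s,z\rangle\supset\Pi$ only in the point $[\varphi(y):\Psi(y)]$.
\item[(iv)] If $a\neq 0$, the condition at $y\neq y'$ says exactly that this point lies in $\Pi$.
\end{enumerate}
So $\ev_{y'}$ fails to be injective, for every choice of $y'$, precisely when $\Lambda'\cap\Pi\neq\emptyset$.

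The missing input is the hypothesis that the $x_j$ are in linearly general position with respect to $\Pi$. Since $(r-s-1)+m<r$ when $m\le s$, it gives $\Lambda'\cap\Pi=\emptyset$. Hence $a=0$, and then $\psi=0$.

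That hypothesis is not what Step 2 needs. For fixed $\lambda$, the $p_j$-conditions determine each $\psi_k$ modulo the degree $r-s$ form $P$ vanishing at $p_1,\ldots,p_{r-s}$. Since $P(y)\neq 0$, the condition at $y$ costs exactly $s+1-m$ dimensions. In the paper, linear general position enters only implicitly, through the normal form chosen for $\Pi$.

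The same computation, with $y'$ replaced by an arbitrary $t$, is also what makes your treatment of the excluded locus work. Assume $\lambda\not\equiv 0$.
\begin{enumerate}
\item[(i)] A common root of $\lambda$ and $\psi$ at some $t\notin\{p_1,\ldots,p_{r-s},y\}$ would again force $\Lambda'\cap\Pi\neq\emptyset$, so it cannot occur.
\item[(ii)] A common root at $p_j$ occurs exactly on the hyperplane $\{\lambda(p_j)=0\}$.
\item[(iii)] A common root at $y$ occurs only at the single point $(\sigma_y,\sigma_y\Psi)$ of $\bP(S)$. This is the curve $[\varphi:\Psi]\subset\Lambda'$ with a base point inserted at $y$.
\end{enumerate}
So your expectation that base points can only sit at the $p_j$ or at $y$ is right, and the case at $y$ genuinely occurs. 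In the paper's coordinates it is the point $\beta=\gamma_1=\cdots=\gamma_m=0$, which the paper's description of the base-point-free locus overlooks. Removing it only adds one point to $B'$, so (b) and (c) still hold.
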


\begin{proof}
Let \(\overline{\cF}\) be the family of maps defined in the same way as \(\cF\), except that we relax the condition \(h(y)\in \Pi\setminus B\) to \(h(y)\in \Pi\). Clearly, \(\cF\subset \overline{\cF}\) is an open subset. We will explicitly parametrize \(\overline{\cF}\) in terms of local coordinates.

First, recall that a map \(h\colon \bP^1\to X\) of degree \((r-s,1)\) is given by a tuple
\[
h=[u_0u_1:\cdots:u_0u_{r-s}:u_{r-s+1}:\cdots:u_{r+1}],
\]
where
\begin{itemize}
\item \(u_0\in H^0(\bP^1,\cO(1))\), \(u_1,\ldots,u_{r-s}\in H^0(\bP^1,\cO(r-s-1))\), and \(u_{r-s+1},\ldots,u_{r+1}\in H^0(\bP^1,\cO(r-s))\),
\item the collections of sections \(u_1,\ldots,u_{r-s}\) and \(u_0,u_{r-s+1},\ldots,u_{r+1}\) both do not simultaneously vanish, and are both taken up to the same simultaneous scaling, and
\item (base-point-freeness) the sections \(u_0u_1,\ldots,u_{r+1}\in H^0(\bP^1,\cO(r-s))\) share no common factors.
\end{itemize}
For \(j=1,\ldots,r+1\), write \(e_j\in \bP^r\) for the point
\[
e_j=[0:\cdots:0:1:0:\cdots:0],
\]
where \(1\) appears in the \(j\)-th coordinate. For \(j<r-s\), the point \(e_j\in \bP^r\) may be identified with a point \(e_j\in X\). For \(j>r-s\), the point \(e_j\in \bP^r\) lies in the blown-up subspace \(\bP^s\subset \bP^r\). 

We may assume that the coordinates are chosen in such a way that
\begin{itemize}
\item \(x_j=e_j\), for \(j=1,\ldots,r-s\),
\item \(\Pi\subset X\) is the proper transform of \(\langle [1:\cdots:1],e_{r-s+1},\ldots,e_{r-s+m}\rangle\subset \bP^r\).
\end{itemize}
Choose homogeneous coordinates \(z,w\) on \(\bP^1\). Identify \(p_1,\ldots,p_{r-s}\in \bP^1\) with elements of \(\bK\subset \bP^1\), taking \(\frac{z}{w}\) to be an affine coordinate, and assume also that \(y'=\infty\) and \(y=0\). From the fact that \(h(p_j)=x_j\) for \(j=1,\ldots,r-s\), and that \(\pi(h(y))=\pi(h(0))=[1:\cdots:1]\in \bP^{r-s-1}\), we deduce that
\begin{align*}
u_1 &= p_1(z-p_2w)\cdots(z-p_{r-s}w),\\
u_2 &= p_2(z-p_1w)(z-p_3w)\cdots(z-p_{r-s}w),\\
&\vdots\\
u_{r-s} &= p_{r-s}(z-p_1w)(z-p_2w)\cdots(z-p_{r-s-1}w).
\end{align*}
 In particular, the sections \(u_1,\ldots,u_{r-s}\in H^0(\bP^1,\cO(r-s-1))\) are determined, as is the point
\[
\pi(h(y'))=\pi(h(\infty))=[p_1:\cdots:p_{r-s}]\in \bP^{r-s-1}.
\]

Next,  \(u_{r-s+1},\ldots,u_{r+1}\) must all vanish at \(p_1,\ldots,p_{r-s}\), and \(u_{r-s+m+1},\ldots,u_{r+1}\) must all be equal at \(y=0\). We conclude that we must have
\begin{align*}
u_{r-s+1} &= \gamma_1(z-p_1w)\cdots(z-p_{r-s}w),\\
&\vdots\\
u_{r-s+m} &= \gamma_m(z-p_1w)\cdots(z-p_{r-s}w),\\
u_j &= \gamma(z-p_1w)\cdots(z-p_{r-s}w), \qquad j=r-s+m+1,\ldots,r+1,
\end{align*}
for some \(\gamma_1,\ldots,\gamma_m,\gamma\in \bK\).

Finally, write \(u_0=\alpha z-\beta w\), for some \(\alpha,\beta\in \bK\). Then, because 
\[
u_0u_1,\ldots,u_0u_{r-s},u_{r-s+m+1},\ldots,u_{r+1}
\]
must all be equal at \(y=0\), we have
\[
(-\beta)(-1)^{r-s-1}p_1p_2\cdots p_{r-s}
=
\gamma(-1)^{r-s}p_1p_2\cdots p_{r-s}
\Longrightarrow \gamma=\beta.
\]

The base-point-free condition on the sections \(u_j\) amounts to the requirement that \(\alpha,\beta\) are not both \(0\), and furthermore that \(\frac{\alpha}{\beta}\neq p_1,\ldots,p_{r-s}\). Therefore, \(\overline{\cF}\) is identified (scheme-theoretically) with the open subset of the projective space \(\bP^{m+1}\) with homogeneous coordinates
\[
\alpha,\beta,\gamma_1,\ldots,\gamma_m,
\]
where \((\alpha,\beta)\neq(0,0)\) and \(\frac{\alpha}{\beta}\neq p_1,\ldots,p_{r-s}\).

Now, evaluation at \(y'=\infty\) gives a map
\begin{align*}
\ev_\infty\colon \overline{\cF} &\to X,\\
[\alpha:\beta:\gamma_1:\cdots:\gamma_m] &\mapsto [\alpha\cdot p_1:\cdots:\alpha\cdot p_{r-s}:\gamma_1:\cdots:\gamma_m:\beta:\cdots:\beta].
\end{align*}
It is clear by inspection that \(\ev_\infty\) is an isomorphism onto its image, which is the complement of a finite union of linear subspaces in a linear space \(\Pi'\subset X\) of dimension \(m+1\). Furthermore, we have \(\pi(\Pi')=[p_1:\cdots:p_{r-s}]\).

It remains to restrict \(\ev_\infty\) to the open subset \(\cF\subset \overline{\cF}\). However, \(\cF\subset \overline{\cF}\) is also the complement of a finite union of linear spaces, under the identification of \(\overline{\cF}\) with an open subset of \(\bP^{m+1}\). Indeed, for any \(u\in \overline{\cF}\), we have
\[
h(0)=[\beta\cdot 1:\cdots:\beta\cdot 1:\gamma_1:\cdots:\gamma_m:\beta:\cdots:\beta].
\]
In particular, removing \(h\in \overline{\cF}\) with the property that \(h(0)\in B\subset \Pi\) amounts to removing the loci on \(\overline{\cF}\) where certain linear equations in \(\beta,\gamma_1,\ldots,\gamma_m\) vanish. All three of the needed statements (a)-(c) follow.
\end{proof}

\begin{proof}[Proof of Proposition \ref{prop:tree_exists}]
We now carry out the construction of the desired map \(h\colon R\to X\), outlined at the beginning of this section. Choose points \(x_{\ell 1},\ldots,x_{\ell,r-s+1}\in C'\) in linearly general position. Apply Lemma \ref{lem:family_of_maps} with:
\begin{itemize}
\item \(\Pi=x_{\ell,r-s+1}\) and \(B=\emptyset\),
\item \(x_j=x_{\ell j}\) for \(j=1,\ldots,r-s\),
\item \(\bP^1=R_\ell\), and
\item \((y',y)=(y_{\ell-1},y_\ell)\).
\end{itemize}
Let \(\cF_\ell\) be the resulting family of maps \(h^\ell\colon R_\ell\to X\), and define \(\Pi_{\ell-1}=\Pi'\), \(B_{\ell-1}=B'\). Proceeding inductively in the order \(R_{\ell-1},\ldots,R_1\), apply Lemma \ref{lem:family_of_maps} with:
\begin{itemize}
\item \(\Pi=\Pi_i\),
\item \(x_j=x_{ij}\) for \(j=1,\ldots,r-s\), where \(x_{ij}\in C'\) are chosen in linearly general position with respect to \(\Pi_i\) and \(B_i\),
\item \(\bP^1=R_i\), and
\item \((y',y)=(y_{i-1},y_i)\).
\end{itemize}
Let \(\cF_i\) be the resulting family of maps \(h^i\colon R_i\to X\), and define \(\Pi_{i-1}=\Pi'\), \(B_{i-1}=B'\).

Choose \(x_{01},\ldots,x_{0,r-\ell+1}\in C'\) in linearly general position with respect to \(\Pi_0\) and \(B_0\). Let \(\Lambda_0=\langle x_{01},\ldots,x_{0,r-\ell+1}\rangle\cong \bP^{r-\ell}\) and let \(x=\Lambda_0\cap(\Pi_0\setminus B_0)\). Then, observe that \(x_{01},\ldots,x_{0,r-\ell+1},x\) must be in linearly general position on \(\Lambda_0\). Indeed, if this were not the case, then there would exist a point \(x\in\Pi_0\) lying on a common \((r-\ell-1)\)-plane with a strict subset of \(x_{01},\ldots,x_{0,r-\ell+1}\), contradicting linear generality. 

Therefore, there exists an \((r-\ell-1)\)-parameter family of embedded rational normal curves \(h^0\colon R_0\hookrightarrow\Lambda_0\) passing through all of the points \(x_{01},\ldots,x_{0,r-\ell+1}\). A general such rational normal curve \(h^0\) misses the proper linear subspace \(\bP^s\cap\Lambda_0\), because \(s\leq r-2\). Choose such a general \(h^0\), and define \(p_{01},\ldots,p_{0,r-\ell+1}\in R_0\) in such a way that \(h^0(p_{0j})=x_{0j}\) for \(j=1,\ldots,r-\ell+1\), in addition to \(h^0(y_0)=x\). Then, \(h^0\) lifts to a map \(h^0\colon R_0\hookrightarrow X\) of degree \(r-\ell,0\), as it image does not meet the blown-up subspace \(\bP^s\subset \bP^r\).

Finally, the point \(x\in\Pi_0\setminus B_0\) uniquely determines a map \(h^1\in\cF_1\), which in turn determines a point \(h^1(y_1)\in\Pi_1\setminus B_1\). Continuing in this fashion, we uniquely determine maps \([h^i\colon R_i\to X]\in \cF_i\), with the property that
\[
h^i(y_i)=h^{i+1}(y_i)\in\Pi_i\setminus B_i
\]
for each \(i\). In particular, we obtain a map \(h\colon R\to X\) satisfying the needed degree and incidence conditions.

It remains to prove that \(H^1(R,h^*T_X(-\sum p_{ij}))=0\). A straightforward calculation shows that \(\chi(R,h^*T_X(-\sum p_{ij}))=0\), so it suffices to show that \(H^0(R,h^*T_X(-\sum p_{ij}))=0\). This amounts to the statement that the map \(h\) admits no nontrivial first-order deformations, subject to the fixed incidence conditions \(h(p_{ij})=x_{ij}\).

Fix a first-order deformation of \(h\). We show first that the image \(h(y_0)\) deforms trivially to first order. Indeed, \(h(y_0)\) must deform along both \(\Lambda_0\) and \(\Pi_0\), but the intersection \(\Lambda_0\cap\Pi_0=x\) is transverse. Now, by Lemma \ref{lem:family_of_maps}(c), it follows that \(h^1\) also deforms trivially, and therefore that \(h(y_1)=h^1(y_1)\) also deforms trivially. Continuing inductively in this manner, we deduce that all of the restrictions \(h^1,\ldots,h^{\ell}\) deform trivially. Finally, the restriction \(h^0\colon R_0\to X\) deforms to first order, preserving \(r-\ell+2\) incidence conditions \(h^0(p_{i0})=x_{i0}\) and \(h(y_0)=x\). By Lemma \ref{lem:tangent_degenerate_curve} below, the deformation of \(h^0\) must also be trivial. Therefore, \(h\) admits no non-trivial deformations.

\end{proof}

\begin{lemma} \label{lem:tangent_degenerate_curve}
Let \(h_0\colon R_0=\bP^1\to X\) be a curve of degree \((r-\ell,0)\), where \(1\le \ell\le r-1\), given by the proper transform of a rational normal curve \(R_0\subset \Lambda_0\cong \bP^{r-\ell}\subset \bP^r\). Then, we have
\[
h_0^*T_X\cong \cO(r-\ell)^{\oplus \ell}\oplus \cO(r-\ell+1)^{\oplus r-\ell}.
\]
Furthermore, the subbundle \(\cO(r-\ell+1)^{\oplus r-\ell}\subset h_0^*T_X\) is identified with \(T_{\Lambda_0}|_{R_0}\).
\end{lemma}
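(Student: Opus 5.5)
Since the image of \(h_0\) avoids the blown-up center \(\bP^s\) (being of degree \((r-\ell,0)\)), the blow-up morphism \(b\) is an isomorphism near \(h_0(R_0)\). So \(h_0^*T_X\cong h_0^*T_{\bP^r}\), and I will compute in \(\bP^r\). The plan mirrors the proof of Lemma \ref{lem:tangent_r-s}. I will use the normal bundle sequence of the linear subspace \(\Lambda_0\subset\bP^r\), restricted to \(R_0\):
\[
0\to T_{\Lambda_0}|_{R_0}\to h_0^*T_{\bP^r}\to N_{\Lambda_0/\bP^r}|_{R_0}\to 0.
\]

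The two outer terms are computed separately. Since \(\Lambda_0\) is cut out by \(\ell\) hyperplanes, \(N_{\Lambda_0/\bP^r}\cong\cO_{\Lambda_0}(1)^{\oplus \ell}\). Its restriction to the rational normal curve of degree \(r-\ell\) is therefore \(\cO(r-\ell)^{\oplus\ell}\).

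For the subbundle, \(T_{\Lambda_0}|_{R_0}\) is the restricted tangent bundle of a rational normal curve in \(\bP^{r-\ell}\). By \cite[Proposition 3.2]{larson}, as already used in the proof of Lemma \ref{lem:tangent_r-s}, it is \(\cO(r-\ell+1)^{\oplus r-\ell}\). As a sanity check, the Euler sequence gives degree \((r-\ell+1)(r-\ell)\), consistent with this. If the reference only covers the normal bundle, I would instead combine \(T_{\bP^1}=\cO(2)\) with the classical \(N_{R_0/\Lambda_0}\cong\cO(r-\ell+2)^{\oplus r-\ell-1}\); the resulting extension splits because the relevant \(\mathrm{Ext}^1\) group is \(H^1(\cO(-(r-\ell)))\).

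To conclude, the extension class lies in
\[
\operatorname{Ext}^1\bigl(\cO(r-\ell)^{\oplus\ell},\cO(r-\ell+1)^{\oplus r-\ell}\bigr)=H^1(\bP^1,\cO(1))^{\oplus \ell(r-\ell)}=0,
\]
so the sequence splits. This gives the stated decomposition, with \(\cO(r-\ell+1)^{\oplus r-\ell}\) equal to the image of \(T_{\Lambda_0}|_{R_0}\). That image is canonical, being the maximal-degree part of the Harder–Narasimhan filtration, so the identification in the statement holds.

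The only step needing real input is the splitting type of \(T_{\Lambda_0}|_{R_0}\). I would quote it rather than rederive it, so I expect no serious obstacle. The hypothesis \(\ell\le r-1\) just ensures \(r-\ell\ge1\), so the rational normal curve is nonconstant; when \(r-\ell=1\) the curve is a line and the formula reads \(\cO(1)^{\oplus r-1}\oplus\cO(2)\), consistent with the proof of Proposition \ref{prop:L}.
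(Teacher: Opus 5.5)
Your main argument is correct and is essentially the paper's proof. Both restrict the sequence $0\to T_{\Lambda_0}|_{R_0}\to h_0^*T_X\to N_{\Lambda_0/\bP^r}|_{R_0}\to 0$, quote the balanced splitting of the restricted tangent bundle of a rational normal curve from Larson, and split using $H^1(\bP^1,\cO(1))=0$. Your remark that $b$ is an isomorphism near $h_0(R_0)$ makes explicit something the paper leaves implicit: one may work in $\bP^r$ even though $\Lambda_0$ itself may meet $\bP^s$. Your Harder--Narasimhan justification of the subbundle identification is likewise more careful than the paper.

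One correction, which affects only your fallback. For $r-\ell\geq 2$ the group $H^1(\bP^1,\cO(-(r-\ell)))$ is nonzero. Moreover, the extension of $N_{R_0/\Lambda_0}$ by $T_{\bP^1}$ genuinely does not split, since a splitting would give the unbalanced $\cO(2)\oplus\cO(r-\ell+2)^{\oplus r-\ell-1}$. If you want a self-contained argument, twist the restricted Euler sequence by $\cO(-(r-\ell+2))$. Because the defining sections form a basis of $H^0(\bP^1,\cO(r-\ell))$, this gives $H^0\bigl(T_{\Lambda_0}|_{R_0}(-(r-\ell+2))\bigr)=0$. Together with the degree count $(r-\ell)(r-\ell+1)$, this forces $T_{\Lambda_0}|_{R_0}\cong\cO(r-\ell+1)^{\oplus r-\ell}$.
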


\begin{proof}
We have a short exact sequence
\[
0\to T_{\Lambda_0}|_{R_0}\to h_0^*T_X\to N_{\Lambda_0/X}|_{R_0}\to 0.
\]
Arguing as in the proof of Lemma \ref{lem:tangent_r-s}, we have
\[
h_0^*T_X\cong N_{\Lambda_0/X}|_{R_0}\oplus T_{\Lambda_0}|_{R_0}\cong \cO(r-\ell)^{\oplus \ell}\oplus \cO(r-\ell+1)^{\oplus r-\ell},
\]
as needed.
\end{proof}

\subsection{\(T_\ell\): Conclusion} \label{sec:tree_conclusion}

\begin{proof}[Proof of Proposition \ref{prop:inductive_steps_main}\((T_\ell)\)]
First, assume that \(\ell\ge 1\), as is the case in Situation \ref{sit:curve_many_points}. Let \(f\colon C=C'\cup R\to X\) be the map given by gluing the maps \(f'\colon C'\to X\) and \(h\colon R\to X\) (Proposition \ref{prop:tree_exists}) along the nodes \(x_{ij}\). By the cohomology vanishing in of Proposition \ref{prop:tree_exists}, the criterion of Lemma \ref{lem:attach_and_smooth} applies, and we conclude immediately.

It remains to consider the case \(\ell=0\). In this case, we take \(x_1,\dots,x_{r+2}\in C'\) to be general points, and \(h\colon R\to \bP^r\) to be a rational normal curve in \(\bP^r\) passing through all of the \(x_i\). Arguing as in the proof of Proposition \ref{prop:tree_exists}, there is an \((r-1)\)-dimensional family of such rational normal curves \(h\), and a general such misses the blow-up center \(\bP^s\subset \bP^r\). Therefore, \(h\) lifts to a map \(h\colon R\to X\) of degree \((r,0)\). Because
\[
h^*T_X\cong h^*T_{\bP^r}\cong \cO(r+1)^{\oplus r},
\]
the criterion of Lemma \ref{lem:attach_and_smooth} again applies. This completes the proof.
\end{proof}

\section{Reductions} \label{sec:reductions}.

In this section, we use Proposition \ref{prop:inductive_steps_main} to reduce \(\Ex(g,d,k)\) to existence for a small set of \emph{terminal} valid tuples. We fix \(r,s\) throughout this section; for the given \(r,s\), the set of terminal tuples consists of the valid tuples where \(g=0\) and a finite set of exceptional tuples.

\begin{definition} \label{def:terminal}
We say that a valid tuple \((g,d,k)\) is \emph{terminal} if it lies in one of the following five families.
\begin{enumerate}
\item[(Ia)] \((g,d,k)=(r+1,2r,k)\), where
\[
s+1+\frac{1}{r-s}\le k\le s+1+ \frac{s+1}{r-s}
\]
\item[(Ib)] \((g,d,k)=(2r+2,3r,k)\), where
\[
s+1+\frac{s+2}{r-s} \le k\le s+1+\frac{s+1}{r-s-1}.
\]
\item[(II)] \((g,d,k)=\left((r-s-1)k-(r-s)(s+1)+(2r+2),\,(r-s-1)k-(r-s)(s+1)+3r,\,k\right)\), where
\[
\frac{(r-s)(s+1)}{r-s-1}\le k\le 2s+2.
\]
\item[(III)] \((g,d,k)=\left((r-s)(q+1),\,(r-s)(q+2)+(s-1),\,s+1+q\right)\), where
\[
q=\left\lceil \frac{s+1}{r-s}\right\rceil.
\]
\item[(IV)] \(g=0\).
\end{enumerate}
\end{definition}

\begin{remark}\label{rem:I_empty}
Because \(k\) is an integer, the lower bound on \(k\) in family (Ia) is equivalent to \(s+2\leq k\). Therefore, family (Ia) is empty unless
\[
\frac{s+1}{r-s}\geq 1 \Longleftrightarrow r\leq 2s+1.
\]
Similarly, family (Ib) is empty unless \(r\leq 2s+2\).
\end{remark}

\begin{proposition} \label{prop:reductions}
Let \((g,d,k)\) be a valid tuple. Then, either \((g,d,k)\) is terminal, or at least one of the operations (see Proposition \ref{prop:inductive_steps_main})
\begin{enumerate}
\item[(L)] \((g,d,k)\to (g-1,d-1,k)\)
\item[(R)] \((g,d,k)\to (g-(r-s),d-(r-s),k-1)\)
\item[\((T_\ell)\)] \((g,d,k)\to \left(g-(r+1)-\ell(r-s-1),d-r-\ell(r-s-1),k-\ell\right)\), where \(\ell\) is an integer satisfying \(0\le \ell\le s+1\)
\end{enumerate}
replaces \((g,d,k)\) with a valid tuple.
\end{proposition}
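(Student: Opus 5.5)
The plan is to reparametrize the tuple so that the three operations and the three inequalities become transparent. Set \(h:=g-d+r\). Then
\[
\rho=g-(r+1)h,\qquad \rho_\bullet=g-(r+1)h+(r-s)(s+1)-(r-s-1)k,\qquad \rho'=g-(r-s)(h+k-s-1).
\]
Operations (L) and (R) preserve \(h\), and \((T_\ell)\) lowers \(h\) by exactly \(1\).

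For fixed \((h,k)\), each of \(\rho,\rho_\bullet,\rho'\) is \(g\) plus a constant. Hence validity of \((g,d,k)\) with \(d=g+r-h\) is equivalent to
\[
g\ge G(h,k):=\max\bigl((r+1)h,\;(r+1)h-(r-s)(s+1)+(r-s-1)k,\;(r-s)(h+k-s-1),\;0\bigr).
\]
By Remark~\ref{rem:dimension_effects}, operation (L) yields a valid tuple exactly when \(g\ge 1\) and \(\rho,\rho_\bullet,\rho'\ge 1\). This holds whenever \(g>G(h,k)\) and \(g\geq 1\). So we may assume \(g=G(h,k)\ge 1\), and the task becomes the following: for each pair \((h,k)\) with \(G(h,k)\ge 1\), either \((G(h,k),\,G(h,k)+r-h,\,k)\) is terminal, or (R) or some \((T_\ell)\) applies.

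Next I would write down the validity criteria for the remaining operations, again read off from Remark~\ref{rem:dimension_effects}.
\begin{itemize}
\item[(R)] is valid iff \(k\ge1\), \(\rho\ge r-s\) and \(\rho_\bullet\ge1\). Nonnegativity of \(g-(r-s)\) follows from \(\rho\ge r-s\).
\item[\((T_\ell)\)] is valid iff \(k\ge\ell\), \(\rho\ge\ell(r-s-1)\) and \(\rho'\ge s+1-\ell\), together with the genus condition \(g\ge r+1+\ell(r-s-1)\). I expect the genus condition to be essentially implied by \(\rho\ge \ell(r-s-1)\) once \(h\ge1\). The case \(h\le 0\) forces the first term of \(G\) to be small and should lead to \(g=0\), that is, family (IV), or to a direct check.
\end{itemize}
The case split is then by which term of the maximum defining \(G\) is attained.

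\textbf{Case 1: \(\rho=0\) is binding.} Then \(\rho_\bullet\) and \(\rho'\) are computed explicitly as functions of \((h,k)\). We try \((T_0)\), which needs \(\rho'\ge s+1\). If \(\rho'\le s\), this gives an upper bound on \(k\) in terms of \(h\). That bound should either contradict \(\rho_\bullet\geq 0\), or pin down small \(h\) (namely \(h=1,2\), giving \(g=r+1,\,2r+2\) and \(d=2r,\,3r\)). These produce families (Ia) and (Ib), where the stated bounds on \(k\) are exactly \(\rho_\bullet\ge0\) together with the failure of \((T_0)\) and \((T_1)\).

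\textbf{Case 2: \(\rho_\bullet=0\) is binding.} Neither (L) nor (R) can apply, so I must find \(\ell\) with \(\ell(r-s-1)\le\rho\), \(s+1-\ell\le\rho'\) and \(\ell\le k\). The natural choice is \(\ell=\min\bigl(k,\lfloor\rho/(r-s-1)\rfloor\bigr)\). I would show that \(\rho+(r-s-1)\rho'\) is large compared with \((r-s-1)(s+1)\) except when \(h\) is tiny; this uses the identity \(\rho-\rho'=-(s+1)h+(r-s)(k-s-1)\). The exceptions should be family (II), where \(k\) ranges up to \(2s+2\).

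\textbf{Case 3: \(\rho'=0\) is binding.} Here (R) preserves \(\rho'\) and is the natural move, valid provided \(\rho\ge r-s\) and \(\rho_\bullet\ge1\). Its failure forces \(k\) to be close to \(s+1\) and \(h\) to be determined, which should give the single tuple (III) with \(q=\lceil (s+1)/(r-s)\rceil\). Where \(\rho_\bullet\) vanishes as well, we fall back on Case 2.

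The main obstacle is Case 2, together with the overlaps where two of the \(\rho\)'s vanish simultaneously. There one must find a single \(\ell\) satisfying three competing inequalities, and check that the only obstructions are precisely the listed terminal ranges, including the integrality issues in (Ia), (Ib) and (II). I would first work out the inequality \(\lfloor \rho/(r-s-1)\rfloor\ge s+1-\rho'\) in closed form in terms of \((h,k)\). I would then treat \(h=1\) and \(h=2\) separately by hand, and give a uniform argument for \(h\ge3\).
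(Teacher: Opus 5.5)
Your plan is correct and is essentially the paper's proof. (L) disposes of the case $g,\rho,\rho_\bullet,\rho'>0$. Then $\rho=0$ is handled by $(T_0)$, which forces $h=g-d+r\in\{1,2\}$ and gives families (Ia)/(Ib). Next, $\rho_\bullet=0$ is handled by some $(T_\ell)$: your criterion $\rho+(r-s-1)\rho'\ge (r-s-1)(s+1)$ works out to $(s+1)\bigl[(r-s-1)(h-1)-(r-s)\bigr]\ge 0$, i.e.\ $h\ge 3$, leaving $h=2$ and family (II). Finally, $\rho'=0$ is handled by (R), whose failure forces $h=1$ and $k=s+1+q$, i.e.\ family (III).

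Two small repairs are needed. First, your uncapped $\ell$ may exceed $s+1$, so take any $\ell$ with $\max(0,s+1-\rho')\le \ell\le \min\bigl(s+1,k,\lfloor \rho/(r-s-1)\rfloor\bigr)$. Second, for (R) you cannot get $g\ge r-s$ from $\rho\ge r-s$, which fails when $h<0$. Instead use $g=(r-s)(h+k-s-1)\ge 1$ when $\rho'=0$. In that case also $k\ge 1$, because $k=0$ would give $\rho_\bullet=k-(s+1)h=-(s+1)h<0$.
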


The strategy of proof of Proposition \ref{prop:reductions} is simple. Recall first from Remark \ref{rem:dimension_effects}, have the following effects on the profile \((\rho,\rho_\bullet,\rho')\).
\begin{itemize}
    \item[(L)] \((\rho,\rho_\bullet,\rho')\to(\rho-1,\rho_\bullet-1,\rho'-1)\).
    \item[(R)] \((\rho,\rho_\bullet,\rho')\to(\rho-(r-s),\rho_\bullet-1,\rho')\).
    \item[\((T_\ell)\)] \((\rho,\rho_\bullet,\rho')\to(\rho-(r-s-1)\ell,\rho_\bullet,\rho'-(s+1-\ell))\).
\end{itemize}
Given a valid tuple \((g,d,k)\) for which \(g,\rho,\rho_\bullet,\rho'>0\), we will show first (Lemma \ref{lem:apply_L}) that (L) can be applied, thereby decreasing \(\rho,\rho_\bullet,\rho'\) all by \(1\). Otherwise, we are either in terminal family (IV) of Definition \ref{def:terminal}, or one of \(\rho,\rho_\bullet,\rho'\) is zero. In these three cases, we will then show that an appropriate combination of decrements (R) and \((T_\ell)\) can be applied until reaching a terminal tuple in family (Ia)/(Ib), (II), (III), respectively.

\begin{lemma} \label{lem:apply_L}
Let \((g,d,k)\) be a valid tuple, and suppose that \(g,\rho,\rho_\bullet,\rho'>0\). Then, the operation (L) replaces \((g,d,k)\) with a valid tuple.
\end{lemma}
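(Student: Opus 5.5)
Validity of the tuple \((g-1,d-1,k)\) requires \(g-1\geq 0\), \(k\geq 0\), and non-negativity of its profile. By Remark \ref{rem:dimension_effects}, operation (L) sends the profile \((\rho,\rho_\bullet,\rho')\) to \((\rho-1,\rho_\bullet-1,\rho'-1)\). Since \(\rho,\rho_\bullet,\rho'\) are integers and each is strictly positive by hypothesis, all three new values are \(\geq 0\). Similarly \(g>0\) gives \(g-1\geq 0\), and \(k\) is unchanged, so \(k\geq 0\) persists. Hence the new tuple is valid.

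The only real work is confirming the profile changes claimed in Remark \ref{rem:dimension_effects}(L), which I will recompute directly.

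For \(\rho\), note that \(g-d+r\) is invariant under \((g,d)\to(g-1,d-1)\). Therefore \(\rho(g-1,r,d-1)=\rho(g,r,d)-1\).

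For \(\rho_\bullet\), the correction terms \((r-s)(s+1)-(r-s-1)k\) do not involve \(g\) or \(d\). So \(\rho_\bullet\) also drops by exactly \(1\).

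For \(\rho'=\rho(g,r-s-1,d-k)\), the quantity \(g-(d-k)+(r-s-1)\) is again invariant under the shift. So \(\rho'\) drops by exactly \(1\) as well.

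There is no real obstacle here. The one point to be careful about is that \(\rho'\) uses \(d-k\) with \(k\) fixed, so the invariance argument applies verbatim. Remark \ref{rmk:degree-checks} then gives the degree inequalities \(d-1\geq r\) and \(d-1-k\geq r-s-1\) automatically, so nothing further needs to be checked.
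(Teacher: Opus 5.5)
Your proposal is correct and follows the same route as the paper: the paper's proof is the one-line observation that \(g,k,\rho,\rho_\bullet,\rho'\) all remain non-negative under (L). Your explicit recomputation of the profile decrements via invariance of \(g-d+r\) and \(g-(d-k)+(r-s-1)\) correctly fills in the detail behind that observation.
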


\begin{proof}
Clearly, \(g,k,\rho,\rho_\bullet,\rho'\) remain non-negative after applying (L).
\end{proof}

Therefore, to prove Proposition \ref{prop:reductions}, it suffices to consider the cases where \(g>0\) and one of \(\rho,\rho_\bullet,\rho'\) is \(0\). We consider the three cases separately, in what follows.

\subsection{\(\rho=0\) and families (Ia)/(Ib)} \label{sec:rho_0}

\begin{proposition} \label{prop:rho_0}
Let \((g,d,k)\) be a valid tuple with \(g>0\) and \(\rho=0\). Suppose that none of the operations (L), (R), \((T_\ell)\) replace \((g,d,k)\) with a valid tuple. Then, \((g,d,k)\) belongs to terminal family (Ia) or (Ib), Definition \ref{def:terminal}.
\end{proposition}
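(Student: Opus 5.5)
Write $a := g-d+r$. The condition $\rho=0$ reads $g=(r+1)a$, and hence $d=g+r-a=r(a+1)$. The assumption $g>0$ forces $a\geq 1$. The plan is to translate the failure of each operation into an inequality on $(a,k)$, then show these inequalities, together with validity, force $a\in\{1,2\}$ with $k$ in exactly the ranges of (Ia) and (Ib).

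\textbf{Step 1: which operations can fail.} I use the profile changes of Remark \ref{rem:dimension_effects}.
\begin{itemize}
\item (L) and (R) lower $\rho$ by $1$ and by $r-s\geq 2$ respectively, so they always fail when $\rho=0$.
\item For $1\leq \ell\leq s+1$, the operation $(T_\ell)$ lowers $\rho$ by $(r-s-1)\ell>0$, since $s\leq r-2$, so it also always fails.
\item $(T_0)$ leaves $\rho$ and $\rho_\bullet$ unchanged and lowers $\rho'$ by $s+1$. It sends $(g,d,k)$ to $((r+1)(a-1),ra,k)$, whose genus $(r+1)(a-1)$ and $k$ are non-negative because $a\geq 1$.
\end{itemize}
Hence the hypothesis that no operation yields a valid tuple is equivalent to the single condition $\rho'\leq s$.

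\textbf{Step 2: the inequalities on $(a,k)$.} Directly from the definitions, with $g=(r+1)a$ and $d=r(a+1)$,
\[
\rho'=(r+1)a-(r-s)(a+k-s-1)=(s+1)a-(r-s)(k-s-1),\qquad \rho_\bullet=(r-s)(s+1)-(r-s-1)k.
\]
Validity gives $\rho'\geq 0$ and $\rho_\bullet\geq 0$; non-terminality of the step gives $\rho'\leq s$. These three conditions are equivalent to
\[
s+1+\frac{(s+1)a-s}{r-s}\;\leq\; k\;\leq\;\min\!\left(s+1+\frac{(s+1)a}{r-s},\; s+1+\frac{s+1}{r-s-1}\right).
\]

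\textbf{Step 3: bounding $a$.} Put $t=r-s\geq 2$. If $a\geq 3$, the lower bound is at least $s+1+\frac{2s+3}{t}$. This exceeds the upper bound $s+1+\frac{s+1}{t-1}$, because
\[
(2s+3)(t-1)-(s+1)t=(s+2)t-(2s+3)\geq 1.
\]
So no $k$ exists, and $a\in\{1,2\}$.

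\textbf{Step 4: matching the terminal families.}
\begin{itemize}
\item For $a=1$ we get $(g,d)=(r+1,2r)$. Since $\frac{s+1}{t}\leq\frac{s+1}{t-1}$, the minimum in the upper bound is $s+1+\frac{s+1}{r-s}$. The range is then exactly $s+1+\frac{1}{r-s}\leq k\leq s+1+\frac{s+1}{r-s}$, which is family (Ia).
\item For $a=2$ we get $(g,d)=(2r+2,3r)$. Since $\frac{s+1}{t-1}\leq\frac{2(s+1)}{t}$ exactly when $t\geq 2$, the minimum in the upper bound is $s+1+\frac{s+1}{r-s-1}$. The lower bound is $s+1+\frac{s+2}{r-s}$, so this is exactly family (Ib).
\end{itemize}

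The only step with any content is Step 3, the elementary inequality ruling out $a\geq 3$. Everything else is bookkeeping with Remark \ref{rem:dimension_effects}; the one point needing care is that $(T_0)$ keeps $g$ and $k$ non-negative, which is what makes it the only operation whose validity depends on the tuple.
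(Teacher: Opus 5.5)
Your proof is correct and follows essentially the same route as the paper: you use $(T_0)$ to force $\rho'\le s$, then combine $0\le\rho'\le s$ with $\rho_\bullet\ge 0$ to restrict to $g-d+r\in\{1,2\}$ (the paper's $n=a+1\in\{2,3\}$), and read off families (Ia) and (Ib). The only difference is cosmetic. The paper writes $n$ as a ceiling in $k$ and bounds it using $k\le (r-s)(s+1)/(r-s-1)$, whereas you write the admissible interval for $k$ in terms of $a$ and show it is empty for $a\ge 3$.
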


\begin{proof}
Because
\[
\rho=(r+1)d-rg-r(r+1)=0,
\]
we may write \(d=rn\) and \(g=(r+1)(n-1)\), for some integer \(n\ge 2\). In particular, we have \(g\ge r+1\). If \(\rho'\ge s+1\), then \((T_0)\) may be applied, which decreases \(g\) by \(r+1\), decreases \(\rho'\) by \(s+1\), and leaves \(\rho,\rho_\bullet,k\) constant. (Note that by Remark \ref{rmk:degree-checks}, the degrees $d$ and $d-k$ automatically remain non-negative.) Therefore, we assume henceforth that \(\rho'\le s\).

Now, we compute that
\[
\begin{aligned}
\rho'&=(r-s)(d-k)-(r-s-1)g-(r-s)(r-s-1) \\
&=(s+1)(n+r-s-1)-(r-s)k.
\end{aligned}
\]
Because \(0\le \rho'\le s\), we conclude that
\[
n=\left\lceil \frac{(r-s)k}{s+1}\right\rceil-(r-s-1).
\]

On the other hand, we have
\[
\rho_\bullet=\rho+(r-s)(s+1)-(r-s-1)k\ge 0,
\]
hence \(k\le \frac{(r-s)(s+1)}{r-s-1}\). Now,
\[
\begin{aligned}
n&=\left\lceil \frac{(r-s)k}{s+1}\right\rceil-(r-s-1) \\
&\le \left\lceil \frac{(r-s)^2}{r-s-1}\right\rceil-(r-s-1) \\
&=(r-s+2)-(r-s-1)=3.
\end{aligned}
\]
Therefore, we must have \(n=2\) or \(n=3\).

If \(n=2\), then \((g,d,k)=(r+1,2r,k)\). The required inequality
\[
s+1+\frac{1}{r-s}\le k\le s+1+\frac{s+1}{r-s}
\]
of terminal family (Ia) is equivalent to \(0\le \rho'\le s\).

If \(n=3\), then \((g,d,k)=(2r+2,3r,k)\). The required inequality
\[
s+1+\frac{s+2}{r-s} \le k\le s+1+ \frac{s+1}{r-s-1}
\]
of terminal family (Ib) is equivalent (on the left) to \(\rho'\le s\), and (on the right) to \(\rho_\bullet\ge 0\).
\end{proof}

We also record the profiles of terminal families (Ia) and (Ib).

\begin{lemma} \label{lem:rho_0_profile}
In terminal families (Ia) and (Ib), respectively, the profiles \((\rho,\rho_\bullet,\rho')\) are
\begin{enumerate}
\item[(a)] \(\left(0,(r-s)(s+1)-(r-s-1)k,(s+1)(r-s+1)-(r-s)k\right)\),
\item[(b)] \(\left(0,(r-s)(s+1)-(r-s-1)k,(s+1)(r-s+2)-(r-s)k\right)\).
\end{enumerate}
In both cases, we have \(0\le \rho'\le s\).
\end{lemma}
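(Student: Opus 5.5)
Since $\rho=0$ in both families, the plan is a direct substitution of the explicit tuples into the definitions of $\rho_\bullet$ and $\rho'$, followed by reading off the bounds on $\rho'$ from the inequalities defining (Ia) and (Ib).

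First, $\rho_\bullet=\rho+(r-s)(s+1)-(r-s-1)k$, so with $\rho=0$ we get $\rho_\bullet=(r-s)(s+1)-(r-s-1)k$ in both cases. For $\rho'$ I will use the formula from the proof of Proposition \ref{prop:rho_0}:
\[
\rho'=(s+1)(n+r-s-1)-(r-s)k,
\]
valid when $d=rn$ and $g=(r+1)(n-1)$. One can also check it directly by expanding $\rho(g,r-s-1,d-k)=g-(r-s)(g-d+k+r-s-1)$ and substituting $g=(r+1)(n-1)$, $d=rn$, which gives $g-d=n-r-1$. Taking $n=2$ gives $(s+1)(r-s+1)-(r-s)k$, which is (a). Taking $n=3$ gives $(s+1)(r-s+2)-(r-s)k$, which is (b).

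For the bounds in family (Ia), the upper bound $k\le s+1+\frac{s+1}{r-s}$ multiplies out to $(r-s)k\le (s+1)(r-s+1)$, so $\rho'\ge0$. The lower bound $k\ge s+1+\frac{1}{r-s}$ multiplies out to $(r-s)k\ge (s+1)(r-s)+1$, so $\rho'\le s$.

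In family (Ib), the lower bound $k\ge s+1+\frac{s+2}{r-s}$ multiplies out to $(r-s)k\ge (s+1)(r-s)+s+2$, so $\rho'\le (s+1)(r-s+2)-(s+1)(r-s)-(s+2)=s$. For nonnegativity I will simply invoke validity of the tuple, since terminal tuples are by definition valid. Alternatively, it follows from the upper bound $k\le s+1+\frac{s+1}{r-s-1}$: this gives $(r-s)k\le (s+1)(r-s)+\frac{(s+1)(r-s)}{r-s-1}$, and $\frac{r-s}{r-s-1}\le 2$ since $r-s\ge2$, so $\rho'\ge0$.

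There is no real obstacle here; the only care needed is the algebra for $\rho'$ and keeping the direction of each inequality straight.
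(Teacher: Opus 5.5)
Your proposal is correct and follows essentially the paper's route: the paper records this lemma as a byproduct of the proof of Proposition \ref{prop:rho_0}, which uses the same formula \(\rho'=(s+1)(n+r-s-1)-(r-s)k\) with \(n=2,3\). It also uses the same translations of the defining inequalities of (Ia) and (Ib) into bounds on \(\rho'\) and \(\rho_\bullet\), with \(\rho'\ge 0\) in (Ib) coming from validity. Your alternative derivation of that last bound via \(\frac{r-s}{r-s-1}\le 2\) is also correct.
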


\begin{remark}\label{rmk: rho_b=0-iff-rho'=0}
        When $s=r-2$, in the terminal family (Ib) one has $\rho_\bullet=0$ if and only if $\rho'=0$.
\end{remark}

\subsection{\(\rho_\bullet=0\) and family (II)} \label{sec:rho_bullet_0}

\begin{proposition} \label{prop:rho_bullet_0}
Let \((g,d,k)\) be a valid tuple with \(g>0\) and \(\rho_\bullet=0\). Suppose that none of the operations (L), (R), \((T_\ell)\) replace \((g,d,k)\) with a valid tuple. Then, \((g,d,k)\) belongs to terminal family (II), Definition \ref{def:terminal}.
\end{proposition}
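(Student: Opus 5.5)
Every operation other than $(T_\ell)$ lowers $\rho_\bullet$ (Remark \ref{rem:dimension_effects}), so it destroys validity when $\rho_\bullet=0$. The hypothesis therefore says: for every $0\le \ell\le s+1$, the tuple produced by $(T_\ell)$ fails to be valid. I would turn this into numerical constraints and show that they cut out exactly family (II).

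First I would reparametrize. Set $h:=g-d+r$, so that $\rho=g-(r+1)h$. The condition $\rho_\bullet=0$ reads
\[
\rho=(r-s-1)k-(r-s)(s+1).
\]
Given $k$, this makes $g$ and $d$ functions of $h$ alone. The operation $(T_\ell)$ lowers $g$ by $(r+1)+\ell(r-s-1)$ and $d$ by $r+\ell(r-s-1)$, so it sends $h\mapsto h-1$ for every $\ell$. I would also express $\rho'$ in terms of $(h,k)$. Writing $h':=(g)-(d-k)+(r-s-1)=h+k-s-1$, we get $\rho'=g-(r-s)h'$, which is linear in $h$ once $\rho$ (equivalently $k$) is fixed.

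The conditions for $(T_\ell)$ to yield a valid tuple are:
\begin{itemize}
\item[(i)] $\rho\ge (r-s-1)\ell$, i.e.\ $\ell\le k-\frac{(r-s)(s+1)}{r-s-1}$;
\item[(ii)] $\rho'\ge s+1-\ell$;
\item[(iii)] $k\ge \ell$;
\item[(iv)] the new genus $g-(r+1)-\ell(r-s-1)$ is non-negative.
\end{itemize}
Condition (i) pushes $\ell$ down and (ii) pushes it up. The natural candidate is the largest $\ell$ allowed by (i) and (iii), namely
\[
\ell_0=\min\Bigl(s+1,\; k-\Bigl\lceil \tfrac{(r-s)(s+1)}{r-s-1}\Bigr\rceil\Bigr),
\]
which is non-negative because $\rho\ge 0$. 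I would then show that if $h\ge 3$, the choice $\ell=\ell_0$ satisfies (ii) and (iv). The point is that $\rho'$ grows with $h$ at fixed $k$ (substituting $g=\rho+(r+1)h$ shows the $h$-coefficient is positive), so it suffices to check the boundary case $h=3$. Hence a non-reducible tuple has $h\le 2$.

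Next I would rule out $h\le 1$ and pin down the $k$-range. If $h\le 0$, then $\rho'$ together with the constraint $\rho_\bullet=0$ should force $\rho'<0$ or $g\le 0$; I expect a direct computation to give this. If $h=1$, I expect the computation to show either that the tuple is invalid or that one of (L), (R), $(T_\ell)$ applies after all; I would treat this as a direct check. For $h=2$, substituting gives exactly $g=(r-s-1)k-(r-s)(s+1)+2r+2$ and $d=g+r-2$, which is the tuple in family (II). The lower bound $k\ge \frac{(r-s)(s+1)}{r-s-1}$ is just $\rho\ge 0$. For the upper bound $k\le 2s+2$, I would show that when $k\ge 2s+3$ some $\ell$ (likely $\ell_0$, or $s+1$) satisfies (i)--(iv) for the target with $h=1$. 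In that case, (ii) follows from the explicit formula for $\rho'$ at $h=1$.

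The main obstacle is the inequality bookkeeping in the step showing that $\ell_0$ works whenever $h\ge 3$ (and in the $k\ge 2s+3$ case). Condition (ii) at $\ell=\ell_0$ involves the ceiling $\lceil (r-s)(s+1)/(r-s-1)\rceil$, and the $\min$ defining $\ell_0$ splits into two regimes. I would handle this by writing $k=\lceil (r-s)(s+1)/(r-s-1)\rceil+t$ with $t\ge0$, treating $t\ge s+1$ and $t\le s$ separately, and in each regime reducing (ii) to a linear inequality in $h$ and $t$ verified at the extreme values.
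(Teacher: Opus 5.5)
Your skeleton matches the paper's (Lemma \ref{lem:g-d+r}): show that $h=g-d+r\ge 3$ always admits a valid $(T_\ell)$, exclude $h\le 1$, and read off family (II) at $h=2$. However, you never compute $\rho'$, and the identity you are missing is
\[
\rho'-\rho_\bullet=(s+1)(g-d+r)-k,\qquad\text{so}\qquad \rho'=(s+1)h-k\quad\text{when }\rho_\bullet=0.
\]
Together with $k\ge s+2$ (forced by $\rho=(r-s-1)k-(r-s)(s+1)\ge 0$), this gives $\rho'<0$ for $h\le 1$. So those cases are simply invalid, and there is no scenario where an operation ``applies after all.''

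The real gap is your plan for the upper bound $k\le 2s+2$, which would fail. You propose to show that for $k\ge 2s+3$ some $(T_\ell)$ lands on a valid tuple with $h=1$, but no valid tuple with $\rho_\bullet=0$ has $h=1$. Concretely, at $h=2$ condition (i) forces $\ell<k-(s+1)$, while (ii) forces $\ell\ge k-(s+1)$, for every $k$. The correct observation is that at $h=2$ one has $\rho'=2(s+1)-k$. So $k\le 2s+2$ is literally the validity condition $\rho'\ge 0$, just as the lower bound on $k$ is $\rho\ge 0$; no operation is involved.

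With the identity in hand, your $h\ge 3$ step is fine, and the ``bookkeeping'' you flag as the main obstacle goes away. If $\ell_0=s+1$, condition (ii) is just $\rho'\ge 0$. Otherwise it reduces to $\lceil (r-s)(s+1)/(r-s-1)\rceil\le (s+1)(h-1)$, which holds for $h\ge 3$ because the left side is at most $2(s+1)$. This is a legitimate alternative to the paper's choice, which is $\ell=0$ if $\rho'\ge s+1$ and otherwise $\ell=s+1-\rho'$ (the smallest $\ell$ allowed by (ii), for which one verifies (i) instead). Note, though, that your reduction to the boundary case $h=3$ is not valid when $\ell_0=s+1$. There (ii) reads $k\le (s+1)h$, which can fail at $h=3$ for large $k$, so it must be taken from validity at the actual $h$.
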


\begin{lemma} \label{lem:g-d+r}
In the setting of Proposition \ref{prop:rho_bullet_0}, we have \(g-d+r=2\).
\end{lemma}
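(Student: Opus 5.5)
The plan is to reduce everything to the single quantity \(h:=g-d+r\). I will show \(h\ge 2\) from validity alone, and \(h\le 2\) by exhibiting an admissible \((T_\ell)\) whenever \(h\ge 3\).

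\textbf{Setup.} Since \(\rho_\bullet=0\), the operations (L) and (R) both lower \(\rho_\bullet\) to \(-1\), so they never give a valid tuple. Only the \((T_\ell)\) are relevant. Each \((T_\ell)\) lowers \(g\) by \(r+1+\ell(r-s-1)\) and \(d\) by \(r+\ell(r-s-1)\), so it lowers \(h\) by exactly \(1\).

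Next I rewrite the profile in terms of \(h\) and \(\rho\). We have \(\rho=g-(r+1)h\) and \(\rho'=g-(r-s)(h+k-s-1)\). The hypothesis \(\rho_\bullet=0\) gives
\[
(r-s-1)k=\rho+(r-s)(s+1).
\]
Substituting for \(k\), and using \((r-s)^2=(r-s-1)(r-s+1)+1\), a short computation should give
\[
\rho'=(s+1)(h-1)-\frac{\rho+s+1}{r-s-1}.
\]
I would double-check this identity carefully, since the whole argument rests on it.

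\textbf{Lower bound.} If \(h\le 1\), the identity gives \(\rho'\le -(\rho+s+1)/(r-s-1)<0\), because \(\rho\ge 0\). This contradicts validity, so \(h\ge 2\).

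\textbf{Upper bound.} Suppose \(h\ge 3\); I will produce \(\ell\in[0,s+1]\) for which \((T_\ell)\) yields a valid tuple, contradicting the hypothesis. After \((T_\ell)\), \(\rho_\bullet\) stays \(0\), \(\rho\) drops by \((r-s-1)\ell\), and \(\rho'\) drops by \(s+1-\ell\). So the two main requirements are
\[
\ell\le \ell_0:=\left\lfloor \frac{\rho}{r-s-1}\right\rfloor
\qquad\text{and}\qquad
\ell\ge s+1-\rho'.
\]
Write \(\rho=(r-s-1)\ell_0+e\) with \(0\le e\le r-s-2\).

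If \(\ell_0\ge s+1\), I take \(\ell=s+1\); then the second requirement is just \(\rho'\ge 0\), which holds by validity. Otherwise I take \(\ell=\ell_0\). By the identity, \(\rho'\ge s+1-\ell_0\) is equivalent to
\[
(s+1)(h-2)\ge \frac{e+s+1}{r-s-1}.
\]
Since \(e\le r-s-2\), the right side is at most \(1+\frac{s}{r-s-1}\le s+1\). The left side is at least \(s+1\) when \(h\ge 3\), so the inequality holds.

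It remains to check \(g',k'\ge 0\) for the new tuple. Nonnegativity of \(k'\) follows from \(\rho_\bullet=0\) and \(\rho\ge 0\) at the new tuple, which force \((r-s-1)k'\ge (r-s)(s+1)\). Nonnegativity of \(g'\) follows from \(g'=\rho(\text{new})+(r+1)(h-1)\ge 0\).

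\textbf{Main obstacle.} I expect the delicate point to be the floor/remainder bookkeeping in the upper bound, especially the boundary case \(r-s-1=1\), where the bound \(1+\frac{s}{r-s-1}\le s+1\) is tight. The case \(\ell_0>s+1\) is the other place to be careful.
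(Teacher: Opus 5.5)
Your proof is correct and follows the paper's argument. The bound $h\ge 2$ comes from $\rho'\ge 0$ together with $\rho_\bullet=0$; your identity is the paper's $\rho'=(s+1)(g-d+r)-k$ with $k$ eliminated. The bound $h\le 2$ comes from exhibiting an admissible $(T_\ell)$ when $h\ge 3$. The only difference is which endpoint of the admissible range of $\ell$ you pick. You take the largest value $\ell=\min(\ell_0,s+1)$ that keeps $\rho\ge 0$, then check $\rho'$. The paper splits on $\rho'$ and takes the smallest value $\ell=\max(0,s+1-\rho')$ that keeps $\rho'\ge 0$, then checks $k$, $\rho$ and $g$.
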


\begin{proof}
First, we have
\[
\rho_\bullet=0\leftrightarrow \rho=(r-s-1)k-(r-s)(s+1).
\]
Moreover, we compute that
\[
\begin{aligned}
\rho'&=\rho'-\rho_\bullet \\
&=(s+1)(g-d+r)-k.
\end{aligned}
\]
Now, \(\rho\ge 0\) implies that \(k\ge s+2\). Therefore,
\[
\rho'\ge 0\implies g-d+r\ge \frac{k}{s+1}\implies g-d+r\ge 2.
\]

We next prove that, if \(g-d+r\ge 3\), then \((T_\ell)\) may be applied, for some \(\ell\). This will imply the Lemma.

\underline{Case 1}: \(\rho'\ge s+1\). We claim that \((T_0)\) can be applied. Indeed, the values of \(\rho,\rho_\bullet,k\) stay the same, and \(\rho'\) decreases by \(s+1\). The value of \(g\) will decrease by \(r+1\), so we only need to check that \(g\ge r+1\). However, we have
\[
g=\rho+(r+1)(g-d+r)>r+1,
\]
so \((T_0)\) replaces \((g,d,k)\) with a valid tuple. 

\underline{Case 2}: \(0\le \rho'\le s\). We claim that \((T_\ell)\) replaces \((g,d,k)\) with a valid tuple, where \(\ell=s+1-\rho'\). \((T_\ell)\) has no effect on \(\rho_\bullet\) and decreases \(\rho'\) to 0. It suffices to check that the quantities
\[
k-\ell,\quad \rho-(r-s-1)\ell,\quad g-(r+1)-(r-s-1)\ell
\]
are all non-negative.
First, we have
\[
\begin{aligned}
k-\ell
&=\left[(s+1)(g-d+r)-\rho'\right]-\left[(s+1)-\rho'\right] \\
&=(s+1)(g-d+r-1)\ge 0.
\end{aligned}
\]
Next,
\[
\begin{aligned}
\rho-(r-s-1)\ell
&=(r-s-1)(k-\ell)-(r-s)(s+1) \\
&=(r-s-1)(s+1)(g-d+r-1)-(r-s)(s+1) \\
&=(s+1)\left[(r-s-1)(g-d+r-1)-(r-s)\right] \\
&\ge (s+1)\left[(r-s-1)2-(r-s)\right]\ge 0.
\end{aligned}
\]
Finally, we have
\[
\begin{aligned}
g-(r+1)-(r-s-1)\ell
&=(g-\rho)-(r+1)-\left[\rho-(r-s-1)\ell\right] \\
&\ge (r+1)(g-d+r-1)\ge 0.
\end{aligned}
\]
\end{proof}

\begin{proof}[Proof of Proposition \ref{prop:rho_bullet_0}]
By Lemma \ref{lem:g-d+r}, we have \(g-d+r=2\). Therefore, we have
\[
\begin{aligned}
g&=\rho+(r+1)(g-d+r)=\rho+2r+2,\\
d&=\rho+3r,
\end{aligned}
\]
where \(\rho=(r-s-1)k-(r-s)(s+1)\). The required inequality
\[
\frac{(r-s)(s+1)}{r-s-1}\le k\le 2s+2
\]
of terminal family (II) is equivalent (on the left) to \(\rho\ge 0\), and (on the right) to \(\rho'\ge 0\).
\end{proof}

\begin{lemma} \label{lem:rho_bullet_0_profile}
In terminal family (II), the profile \((\rho,\rho_\bullet,\rho')\) is
\[
\left((r-s-1)k-(r-s)(s+1),\,0,\,2(s+1)-k\right).
\]
Moreover, we have \(0\le \rho'\le s\).
\end{lemma}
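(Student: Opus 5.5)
The plan is direct substitution into the formulas for \(\rho\), \(\rho_\bullet\), \(\rho'\), using the explicit form of family (II). By Definition \ref{def:terminal}, a tuple in family (II) has \(g=\rho_0+2r+2\) and \(d=\rho_0+3r\), where \(\rho_0:=(r-s-1)k-(r-s)(s+1)\). In particular \(g-d+r=2\).

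First I would compute \(\rho\) directly:
\[
\rho=g-(r+1)(g-d+r)=\rho_0+2r+2-2(r+1)=\rho_0 .
\]
Then \(\rho_\bullet=\rho+(r-s)(s+1)-(r-s-1)k=0\).

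For \(\rho'\), I would use the identity already derived in the proof of Lemma \ref{lem:g-d+r}:
\[
\rho'-\rho_\bullet=(s+1)(g-d+r)-k.
\]
To be safe, I would re-verify it by expanding \(\rho'=(r-s)(d-k)-(r-s-1)g-(r-s)(r-s-1)\) and \(\rho_\bullet=(r+1)d-rg-r(r+1)+(r-s)(s+1)-(r-s-1)k\). Their difference is \(-(s+1)d+(s+1)g-k+\bigl[r(r+1)-(r-s)(r-s-1)-(r-s)(s+1)\bigr]\). The bracket simplifies to \(r(s+1)\), giving \((s+1)(g-d+r)-k\). Substituting \(g-d+r=2\) and \(\rho_\bullet=0\) yields \(\rho'=2(s+1)-k\).

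For the bounds on \(\rho'\), the upper limit \(k\le 2s+2\) in family (II) gives \(\rho'\ge 0\). The other limit \(k\ge (r-s)(s+1)/(r-s-1)\) gives \(\rho'\le 2(s+1)-(r-s)(s+1)/(r-s-1)=(s+1)\frac{r-s-2}{r-s-1}<s+1\). Since \(\rho'\) is an integer, this forces \(\rho'\le s\).

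No step is a real obstacle. The only care needed is the sign bookkeeping in the \(\rho'-\rho_\bullet\) identity and the integrality step for \(\rho'\le s\); the case \(r-s-1\ge1\) is guaranteed by \(s\le r-2\).
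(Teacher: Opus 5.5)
Your proof is correct and takes the same route as the paper. The paper also reads off \(\rho\) and \(\rho'\) from the identities \(\rho_\bullet=0\iff \rho=(r-s-1)k-(r-s)(s+1)\) and \(\rho'-\rho_\bullet=(s+1)(g-d+r)-k\) in the proof of Lemma \ref{lem:g-d+r}, using \(g-d+r=2\), and gets \(0\le\rho'\le s\) from the bounds on \(k\) in family (II); your explicit expansion of the identity and the integrality step (equivalently, \(k\ge s+2\)) just supply details the paper leaves implicit.
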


\begin{proof}
The formulas for \(\rho,\rho'\) follow from the calculations in the proof of Lemma \ref{lem:g-d+r}. The inequality \(0\le \rho'\le s\) follows from the bounds on \(k\) required in terminal family (II).
\end{proof}

\subsection{\(\rho'=0\) and family (III)} \label{sec:rho_prime_0}

\begin{proposition} \label{prop:rho_prime_0}
Let \((g,d,k)\) be a valid tuple with \(g>0\) and \(\rho'=0\). Assume furthermore that \(\rho,\rho_\bullet>0\). Suppose that none of the operations (L), (R), \((T_\ell)\) replace \((g,d,k)\) with a valid tuple. Then, \((g,d,k)\) belongs to terminal family (III), Definition \ref{def:terminal}.
\end{proposition}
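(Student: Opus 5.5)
Since $\rho'=\rho(g,r-s-1,d-k)=0$, I will parametrize as in the proof of Proposition \ref{prop:rho_0}, but with $r-s-1$ in place of $r$. Note that $r-s-1\ge 1$, because $s\le r-2$. We can then write
\[
d-k=(r-s-1)m,\qquad g=(r-s)(m-1)
\]
for an integer $m$, and $g>0$ forces $m\ge 2$. Set $t:=g-d+r$; a direct computation gives $t=m+s-k$. Substituting into the definitions, I expect
\[
\rho=(r-s)(m-1)-(r+1)t,\qquad \rho_\bullet=(s+2)k-(s+1)(m+s)=(m+s)-(s+2)t.
\]
The target family (III) corresponds exactly to $t=1$ and $m=q+2$, where $q=\lceil (s+1)/(r-s)\rceil$. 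Indeed, then $g=(r-s)(q+1)$, $k=s+1+q$, and $d=k+(r-s-1)(q+2)=(r-s)(q+2)+(s-1)$. So the plan is to extract these two equalities from the failure of (R).

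\textbf{Step 1: use that (R) fails.} By Remark \ref{rem:dimension_effects}, (R) keeps $\rho'=0$, lowers $\rho_\bullet$ by $1$ (still $\ge 0$ since $\rho_\bullet>0$), lowers $\rho$ by $r-s$, and lowers $g$ and $k$ by $r-s$ and $1$. We have $g\ge r-s$ since $m\ge 2$. Moreover $k=0$ would give $\rho_\bullet=-(s+1)(m+s)<0$, so $k\ge 1$. Hence the only way (R) can fail is
\[
0<\rho\le r-s-1.
\]

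\textbf{Step 2: show $t\ge 1$.} If $t\le 0$, then $\rho\ge (r-s)(m-1)\ge r-s$, contradicting Step 1.

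\textbf{Step 3: show $t\le 1$.} This is the step I expect to be the main obstacle. Combine $\rho_\bullet>0$, i.e. $m+s\ge (s+2)t+1$, with $(r-s)(m-1)\le (r+1)t+r-s-1$. Eliminating $m$ yields
\[
t\,(s+1)(r-s-1)\le (r-s)(s+1)-1+(\text{small correction}),
\]
which gives $t\le 1$ whenever $s(r-s-2)>1$. The leftover edge cases are $s=r-2$, $s=0$, and $(r,s)=(4,1)$. For these I will redo the integer bounds carefully and use the exact integrality of $\rho$ and $\rho_\bullet$. If the bounds alone do not exclude $t\ge 2$, I will show that some $(T_\ell)$ with $1\le\ell\le s+1$, or $(T_0)$, produces a valid tuple, contradicting the hypothesis. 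Here I will use that $(T_\ell)$ preserves $\rho_\bullet$, lowers $\rho'$ by $s+1-\ell$ (so only $\ell=s+1$ keeps $\rho'=0$), and check the resulting inequalities by hand.

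\textbf{Step 4: conclude.} With $t=1$ we have $k=m+s-1$ and $\rho=(r-s)(m-1)-(r+1)$. The condition $\rho_\bullet=m-2>0$ gives $m\ge 3$, and Step 1 gives
\[
r+1<(r-s)(m-1)\le 2r-s,\qquad\text{i.e.}\qquad 1+\frac{s+1}{r-s}<m-1\le 2+\frac{s}{r-s}.
\]
If $(s+1)/(r-s)$ is an integer, this interval contains no integer, so that case cannot occur. Otherwise the unique solution is $m-1=1+\lceil (s+1)/(r-s)\rceil=q+1$. By the translation above, this is exactly family (III).
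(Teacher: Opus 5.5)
Your route is the paper's. The paper also writes $g=(r-s)(n-1)$ and $d-k=(r-s-1)n$. It uses only the failure of (R) to get $0\le\rho\le r-s-1$, checking $g\ge r-s$ and $k\ge 1$ exactly as you do. From $n\ge 2$, this bound on $\rho$, and $\rho_\bullet>0$ it deduces $n=k-s+1$ (your $t=1$), and then reads off $k=s+1+q$.

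Your formulas for $t$, $\rho$, $\rho_\bullet$ are correct, and so are Steps 1, 2 and 4. In Step 4, using $\rho>0$ to rule out $(r-s)\mid(s+1)$ is fine. The paper instead uses $0\le\rho\le r-s-1$ to get $k=\lceil (s+1)(r-s+1)/(r-s)\rceil$ directly, which covers both cases at once.

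The gap is Step 3, which as written is not a proof. The ``small correction'' is never specified. For $s=0$, $s=r-2$ and $(r,s)=(4,1)$ you only promise to redo the bounds or find a $(T_\ell)$ move. In fact there is no correction term and there are no edge cases.

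From $\rho_\bullet\ge 1$ you get $m-1\ge (s+2)t-s$. Substitute this into $(r-s)(m-1)-(r+1)t\le r-s-1$ and use the identity $(r-s)(s+2)-(r+1)=(s+1)(r-s-1)$. This gives exactly
\[
t\,(s+1)(r-s-1)\le (r-s)(s+1)-1 .
\]
If $t\ge 2$, this forces $(s+1)(r-s-2)\le -1$, which is impossible since $s\ge 0$ and $r-s\ge 2$. So $t\le 1$ for all admissible $r,s$. The $(T_\ell)$ operations are never needed in this proposition, and the paper does not use them here either. With this computation inserted, your proof is complete.
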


Note that it is safe to assume that \(\rho,\rho_\bullet>0\), because otherwise, one can instead apply Proposition \ref{prop:rho_0} or \ref{prop:rho_bullet_0}. Because
\[
\rho'=(r-s)(d-k)-(r-s-1)g-(r-s)(r-s-1)=0,
\]
we may write \(g=(r-s)(n-1)\) and \(d-k=(r-s-1)n\), for some integer \(n\ge 2\).

\begin{lemma} \label{lem:n_k_minus_s}
With notation as above, we must have \(n=k-s+1\).
\end{lemma}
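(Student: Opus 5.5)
The plan is to turn the failure of the operations into numerical inequalities and show they force \(g-d+r=1\), which is the same as \(n=k-s+1\). Put \(e:=g-d+r\). Substituting \(g=(r-s)(n-1)\) and \(d=k+(r-s-1)n\) gives
\[
e=n+s-k,\qquad \rho=(r-s)(n-1)-(r+1)e .
\]
For \(\rho_\bullet\), the terms \((r-s)(n-1)-(r-s-1)n=n-(r-s)\) and \((r-s)(s+1)-(r-s-1)s=(r-s)+s\) combine to give
\[
\rho_\bullet=n+s-(s+2)e .
\]
These are routine algebraic checks. So it suffices to rule out \(e\le 0\) and \(e\ge 2\).

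The only operation I need is (R). It leaves \(\rho'=0\) unchanged, lowers \(\rho_\bullet\) by \(1\), lowers \(\rho\) and \(g\) by \(r-s\), and lowers \(k\) by \(1\). We assume \(\rho_\bullet>0\), and \(g=(r-s)(n-1)\ge r-s\) because \(n\ge 2\). Hence (R) yields a valid tuple whenever \(k\ge 1\) and \(\rho\ge r-s\). Since we assume it does not, either \(k=0\) or \(\rho\le r-s-1\).

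\emph{Case \(k=0\).} Then \(e=n+s\ge 1\), so
\[
\rho_\bullet=n+s-(s+2)(n+s)=-(s+1)(n+s)<0,
\]
contradicting \(\rho_\bullet>0\). So we may assume \(k\ge 1\), and therefore \(\rho\le r-s-1\).

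\emph{Case \(e\le 0\).} Then \(\rho=g-(r+1)e\ge g\ge r-s\), which contradicts \(\rho\le r-s-1\). Hence \(e\ge 1\).

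\emph{Case \(e\ge 2\).} This is the main step: combining the two bounds should give a contradiction. From \(\rho_\bullet\ge 1\) we get \(n\ge (s+2)e-s+1\). From \(\rho\le r-s-1\) we get \((r-s)(n-1)\le (r+1)e+r-s-1\). Inserting the first bound into the second and using \((r-s)(s+2)-(r+1)=(s+1)(r-s-1)\), the difference of the two sides is
\[
(s+1)(r-s-1)\,e-(r-s)(s+1)+1 .
\]
At \(e=2\) this equals \((s+1)(r-s-2)+1>0\), and it increases with \(e\), so it is positive for all \(e\ge 2\). That contradicts \(\rho\le r-s-1\). I expect the only real care here to be the strict-versus-nonstrict integer inequalities, and I will recheck them when writing the proof.

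Therefore \(e=1\), i.e.\ \(n=k-s+1\). Notably only (R), together with the standing assumptions \(\rho,\rho_\bullet>0\), is needed; the operations \((T_\ell)\) do not enter this lemma.
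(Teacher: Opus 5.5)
Your proof is correct and follows the paper's argument. Both rule out $k=0$ via $\rho_\bullet<0$, use the failure of (R) to get $\rho\le r-s-1$, and then combine this bound with $n\ge 2$ and $\rho_\bullet>0$ to force $k-n=s-1$. The only difference is bookkeeping. You work with $e=g-d+r=s-(k-n)$ and direct linear inequalities, while the paper works with $k-n$ and the ceiling expression for $k$. Your cases $e\le 0$ and $e\ge 2$ correspond exactly to the paper's steps $k-n<s$ and $[(r-s)-1][(s-(k-n))-1]<1$.
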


\begin{proof}
We claim first that, if \(\rho\ge r-s\), then (R) can be applied. Indeed, \(\rho,\rho_\bullet,\rho'\) decrease by \(r-s,1,0\), respectively, so all remain non-negative. The value of \(g=(r-s)(n-1)\ge r-s\) decreases by \(r-s\), and \(k\) decreases by \(1\). We only need to check that \(k\geq 0\). Indeed, if \(k=0\), then
\[
\begin{aligned}
\rho_\bullet
&= \rho'-(s+1)(g-d+r)+k \\
&= (s+1)(-s-n) \\
&<0,
\end{aligned}
\]
contradiction. Therefore, we assume henceforth that \(\rho\le r-s-1\).

Now, we have
\[
\begin{aligned}
\rho&=(r+1)\left[k+(r-s-1)n\right]-r(r-s)(n-1)-r(r+1) \\
&= (r+1)k-(s+1)n-r(s+1) \\
&= (r-s)k+(s+1)(k-n)-r(s+1).
\end{aligned}
\]

Because \(0\leq \rho\leq r-s-1\), it follows that
\[
k=\left\lceil \frac{(s+1)(r-(k-n))}{r-s}\right\rceil.
\]
In particular, the value of \(k-n\) determines the value of \(k\). Furthermore, because \(n\geq 2\), we have

\[
\begin{alignedat}{2}
\phantom{\Longleftrightarrow}&\;&\left\lceil \frac{(s+1)(r-(k-n))}{r-s}\right\rceil&\geq (k-n)+2 \\
\Longleftrightarrow&\;&(s+1)(r-(k-n))&>((k-n)+1)(r-s) \\
\Longleftrightarrow&\;&k-n&<s.
\end{alignedat}
\]

Next, we compute
\[
\begin{aligned}
\rho_\bullet
&=(s+1)(g-d+r)+k \\
&=(s+1)(-s-n+k)+k \\
&=(s+2)k-(s+1)n-s(s+1) \\
&=k+(s+1)(k-n)-s(s+1) \\
&=\left\lceil \frac{(s+1)(r-(k-n))}{r-s}\right\rceil
 +(s+1)((k-n)-s)>0.
\end{aligned}
\]

Therefore, we have
\[
\begin{aligned}
(s+1)(r-(k-n)) &> (r-s)(s+1)(s-(k-n)) \\
\Longleftrightarrow [(r-s)-1][(s-(k-n))-1] &< 1.
\end{aligned}
\]
Because \(r-s\geq 2\) and \(k-n<s\), it follows that \(k-n=s-1\), as needed.
\end{proof}

\begin{proof}[Proof of Proposition \ref{prop:rho_prime_0}]
Using that \(n=k-s+1\), we have
\[
\rho=(r-s)k-(s+1)(r-s+1).
\]
From the proof of Lemma \ref{lem:n_k_minus_s}, we have \(0\le \rho\le r-s-1\), which implies that
\[
k=\left\lceil\frac{(s+1)(r-s+1)}{r-s}\right\rceil=s+1+\left\lceil\frac{s+1}{r-s}\right\rceil=:s+1+q.
\]
From here, using that \(q=n-2\), we deduce that \((g,d,k)\) lies in terminal family (III).
\end{proof}

The proof of Proposition \ref{prop:rho_prime_0} also gives the following.

\begin{lemma} \label{lem:rho_prime_0_profile}
In terminal family (III), the profile \((\rho,\rho_\bullet,\rho')\) is
\[
\left((r-s)q-(s+1),\,q,\,0\right).
\]
Moreover, we have \(0\le \rho\le r-s-1\) and \(k=s+1+q\).
\end{lemma}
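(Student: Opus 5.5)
The statement is Lemma \ref{lem:rho_prime_0_profile}. I will verify it by direct substitution, treating family (III) through the parametrization found in the proof of Proposition \ref{prop:rho_prime_0}. That is, I set $n=q+2$ and $k=s+1+q$, so that $g=(r-s)(q+1)=(r-s)(n-1)$ and $d-k=(r-s-1)n$. The last identity holds because $d-k=(r-s)(q+2)+(s-1)-(s+1+q)=(r-s-1)(q+2)$. With this, $k=s+1+q$ holds by definition of family (III), and $\rho'=0$ follows from the computation preceding Lemma \ref{lem:n_k_minus_s}: writing $g=(r-s)(n-1)$ and $d-k=(r-s-1)n$ is exactly the parametrization of $\rho'=0$.

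Next I compute $\rho$. The proof of Lemma \ref{lem:n_k_minus_s} gives the identity $\rho=(r+1)k-(s+1)n-r(s+1)$, valid whenever $\rho'=0$. Substituting $k=s+1+q$ and $n=q+2$ gives
\[
\rho=(r+1)(s+1+q)-(s+1)(q+2)-r(s+1)=(r-s)q-(s+1).
\]
For the bound, set $q=\lceil (s+1)/(r-s)\rceil$. Then $(r-s)q\ge s+1$, so $\rho\ge0$. Also $(r-s)(q-1)<s+1$, i.e. $(r-s)q\le s+r-s=r$, so $\rho\le r-s-1$.

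Finally I compute $\rho_\bullet$. The proof of Lemma \ref{lem:n_k_minus_s} also gives $\rho_\bullet=(s+1)(k-n-s)+k$. Here $k-n=s-1$, so $\rho_\bullet=-(s+1)+s+1+q=q$. Equivalently, one can check directly that $\rho_\bullet=\rho+(r-s)(s+1)-(r-s-1)k$ reduces to $q$.

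There is no real obstacle. The only care needed is to reuse the identities from the proof of Lemma \ref{lem:n_k_minus_s}, which hold purely algebraically once $\rho'=0$, rather than relying on the non-terminality assumptions used in that proof.
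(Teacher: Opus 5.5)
Your proposal is correct and follows the paper's approach: the paper simply reads the profile off the identities $\rho=(r+1)k-(s+1)n-r(s+1)$ and $\rho_\bullet=(s+1)(k-n-s)+k$ from the proof of Proposition \ref{prop:rho_prime_0}, and your substitution $n=q+2$, $k=s+1+q$ makes this explicit. Deriving $0\le\rho\le r-s-1$ directly from the ceiling defining $q$, instead of from the non-applicability of (R) used in that proof, is a good way to make the bound self-contained.
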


\section{Base cases}
\label{sec:base_cases}

In this section, we handle existence for the terminal families of Definition \ref{def:terminal}, and deduce our main existence theorem, Theorem \ref{thm:main}. In order to streamline the arguments, we will assume in \S\ref{sec:terminal_base} that \(\charac(\bK)=0\). However, we will explain afterward in \S\ref{sec:positive_char} how to circumvent this hypothesis.

\subsection{Genus 0}
\label{sec:genus_0}

\begin{proposition}
\label{prop:genus_0}
Let \((g=0,d,k;r,s)\) be a valid tuple. Then, \(\Ex(0,d,k;r,s)\) holds.
\end{proposition}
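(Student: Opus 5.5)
The plan is to build the required rational curve as a comb: a nice rational curve of degree \((r,k_0)\) with \(k_0\le s+1\), to which we attach lines at single nodes. Attaching trees at one node keeps the arithmetic genus \(0\), so Lemma~\ref{lem:attach_and_smooth} (with \(m=1\)) finishes the argument. First, I note what validity means when \(g=0\). We have \(\rho=(r+1)(d-r)\), so \(\rho\ge0\iff d\ge r\), and \(\rho'\ge 0\iff d-k\ge r-s-1\). I expect \(\rho_\bullet\ge0\) to follow from these, so it plays no role. In any case, only the two degree inequalities of Remark~\ref{rmk:degree-checks} will be used.

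\emph{Base curve.} Fix \(k_0\le s+1\). Choose \(k_0\) general points of \(\bP^s\) and \(r+3-k_0\) general points of \(\bP^r\); together these are \(r+3\) points in linearly general position. Take the rational normal curve \(R_0\subset\bP^r\) through them. I must check that \(R_0\) meets \(\bP^s\) transversally in exactly those \(k_0\) points, so that its lift \(h_0\colon\bP^1\to X\) has degree \((r,k_0)\). The points of \(R_0\cap\bP^s\) span at most a \(\bP^s\), hence there are at most \(s+1\) of them, counted with multiplicity. To rule out extra or tangential intersections for general choices, I will run a dimension count on the family of rational normal curves through the chosen points. This is the step I expect to need the most care.

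For niceness, I use that \(T_X\subset b^*T_{\bP^r}\) is an elementary modification along \(E\). Hence \(h_0^*T_X\) is obtained from \(h_0^*T_{\bP^r}\cong\cO(r+1)^{\oplus r}\) by \(k_0\) elementary modifications down, at the \(k_0\) points over \(E\). Every summand therefore has degree at least \(r+1-k_0\ge r-s\ge 2\), and \(H^1=0\). Non-degeneracy holds because \(R_0\) spans \(\bP^r\), together with Remark~\ref{rem:degenerate}. Thus \(h_0\) is nice.

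\emph{Attaching lines.} There are two kinds of lines, each attached at one general point \(x\) of the existing curve.
\begin{itemize}
\item A line of degree \((1,0)\) missing \(\bP^s\). Its pullback of \(T_X\) is \(\cO(2)\oplus\cO(1)^{\oplus r-1}\), as in Proposition~\ref{prop:L}.
\item A line of degree \((1,1)\): the proper transform of the line joining \(x\) to a point of \(\bP^s\). This line lies in a fiber \(\cong\bP^{s+1}\) of \(\pi\). Its tangent bundle in the fiber is \(\cO(2)\oplus\cO(1)^{\oplus s}\), and its normal bundle is \(\pi^*T_{\bP^{r-s-1}}|\cong\cO^{\oplus r-s-1}\). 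The extension splits, so \(h^*T_X\cong\cO(2)\oplus\cO(1)^{\oplus s}\oplus\cO^{\oplus r-s-1}\).
\end{itemize}
In both cases \(H^1(h^*T_X(-x))=0\), since all summands of the twist have degree \(\ge -1\).

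Lemma~\ref{lem:attach_and_smooth} then shows that attaching one line raises \(\Ex(0,d,k)\) to \(\Ex(0,d+1,k)\) or \(\Ex(0,d+1,k+1)\), respectively. This holds provided the starting map is nice, and the nice map produced at each stage serves as the next starting map.

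\emph{Bookkeeping.} Given a valid \((0,d,k)\), there are two cases.
\begin{itemize}
\item If \(k\le s+1\), start from \((r,k)\) and attach \(d-r\ge0\) lines of degree \((1,0)\).
\item If \(k>s+1\), start from \((r,s+1)\). Attach \(k-s-1\) lines of degree \((1,1)\) and \(d-r-(k-s-1)\) lines of degree \((1,0)\). The last count equals \((d-k)-(r-s-1)\ge0\) by \(\rho'\ge0\).
\end{itemize}
This yields \(\Ex(0,d,k;r,s)\) in all cases. The argument uses only elementary curves and is characteristic-free, apart from the genericity check for the base curve.
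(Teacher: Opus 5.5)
Your proof is correct, but it takes a genuinely different route from the paper.

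\textbf{The paper's route.} The paper settles the genus-\(0\) case in one step. It writes down \(f=[f_0f_1:\cdots:f_0f_{r-s}:f_{r-s+1}:\cdots:f_{r+1}]\), where
\begin{itemize}
\item \(f_0\in H^0(\bP^1,\cO(k))\) is nonzero,
\item \(f_1,\ldots,f_{r-s}\in H^0(\bP^1,\cO(d-k))\) are independent with no common zero, and
\item \(f_{r-s+1},\ldots,f_{r+1}\in H^0(\bP^1,\cO(d))\) complete these to an independent, base-point-free collection.
\end{itemize}
The only numerical inputs are \(d\ge r\) and \(d-k\ge r-s-1\). Niceness is quoted from \cite[Lemma 2.1.2]{cl_interpolation}.

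\textbf{Your route.} You stay inside the degeneration machinery of \S\ref{sec:inductive_steps}. You check the base cases \((0,r,k_0)\), \(k_0\le s+1\), by hand; your bound \(h_0^*T_X\supseteq h_0^*b^*T_{\bP^r}(-\sum p_i)\cong\cO(r+1-k_0)^{\oplus r}\) is right. You then use genus-preserving analogues of (L) and (R), gluing a \((1,0)\)- or \((1,1)\)-line at a single node. Both splittings you give are correct; the second follows from \(\pi\) being a smooth \(\bP^{s+1}\)-bundle. The bookkeeping checks out, and \(\rho_\bullet\ge (s+2)(d-r)+(s+1)>0\) is indeed automatic when \(g=0\).

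\textbf{Comparison.} The paper's route is shorter and needs no transversality analysis, but it relies on an external niceness result. Yours is self-contained given Lemma \ref{lem:attach_and_smooth}. The cost is an induction, plus the one step you only promised: that the base rational normal curve meets \(\bP^s\) transversally in exactly \(k_0\) points.

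\textbf{Filling that step.} For \(k_0=s+1\) it is forced by your length bound. In general it is easiest if you choose the curve first rather than the points:
\begin{itemize}
\item Take any rational normal curve \(R_0\), any \(k_0\) points on it with span \(\Lambda\cong\bP^{k_0-1}\), and a general \(s\)-plane containing \(\Lambda\).
\item Projection from \(\Lambda\) maps \(R_0\) onto a rational normal curve in \(\bP^{r-k_0}\), with each chosen point sent to the image of its tangent line.
\item A general \(\bP^{s-k_0}\subset\bP^{r-k_0}\) misses this curve, because \(r-s\ge 2\).
\item Hence the chosen \(s\)-plane meets \(R_0\) only in the \(k_0\) points and contains none of the tangent lines there, so the intersection is transverse.
\end{itemize}
Since all \(s\)-planes in \(\bP^r\) are projectively equivalent, this one example suffices.
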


\begin{proof}
Recall that \(\rho,\rho'\ge 0\) imply that \(d\ge r\) and \(d-k\ge r-s-1\), respectively. Therefore, we may choose:
\begin{itemize}
\item a nonzero \(f_0\in H^0(\bP^1,\cO(k))\),
\item linearly independent \(f_1,\dots,f_{r-s}\in H^0(\bP^1,\cO(d-k))\), with no common vanishing point, and
\item \(f_{r-s+1},\dots,f_{r+1}\in H^0(\bP^1,\cO(d))\), such that
\[
f_0f_1,\dots,f_0f_{r-s},f_{r-s+1},\dots,f_{r+1}
\]
are linearly independent, with no common vanishing point.
\end{itemize}
Then,
\[
f=[f_0f_1:\cdots:f_0f_{r-s}:f_{r-s+1}:\cdots:f_{r+1}]
\]
defines a map \(f\colon \bP^1\to X_{r,s}\) of degree \((d,k)\). By the linear independence assumptions, \(f\) is non-degenerate, and by \cite[Lemma 2.1.2]{cl_interpolation}, \(f\) is nice.
\end{proof}

\subsection{Serre duality}
\label{sec:serre}

For the remaining terminal families (Ia)/(Ib), (II), (III), the basic mechanism by which we will prove existence is the following.
\begin{enumerate}
\item[(1)] Produce a smooth point
\[
\uw\in G^{r,r-s-1}_{d,k}(C)
\]
from the data of an ILS with different invariants \((g,\dtilde,k,\rtilde,\ws)\), via Serre duality.
\item[(2)] Show that \(\uw\) is bpf, and therefore underlies a nice map.
\end{enumerate}

Serre duality provides a mechanism to produce a \emph{complete} linear series \(W=H^0(\cL)\) from the data of the complete linear series \(H^0(K_C\otimes\cL^\vee)\). Therefore, we will need the following notion.

\begin{definition}
\label{def:complete}
Let
\[
\uw=(\cL,D,V\subset W)\in G^{r,r-s-1}_{d,k}(C)
\]
be an ILS. We say that \(\uw\) is \emph{complete} if both \(W=H^0(\cL)\) and \(V=H^0(\cL(-D))\).
\end{definition}

It will be important to allow \(s=r-1\) when considering moduli spaces of ILS \(G^{r,r-s-1}_{d,k}(C)\).

\begin{notation}
\label{notation:serre}
Let
\[
\uw=(\cL,D,V\subset W)\in G^{r,r-s-1}_{d,k}(C)
\]
be an ILS. Write
\[
\begin{aligned}
\cM &= K_C\otimes\cL(-D)^\vee,\\
\cM(-D) &= K_C\otimes\cL^\vee.
\end{aligned}
\]
Then, \(\cM\) is a line bundle on \(C\) of degree
\[
\dtilde:=(2g-2)-d+k,
\]
and \(\deg(\cM(-D))=\dtilde-k\). If \(\uw\) is complete, then
\[
\begin{aligned}
h^0(\cM) &= \rtilde+1,\\
h^0(\cM(-D)) &= \rtilde-\ws,
\end{aligned}
\]
where
\[
\begin{aligned}
\rtilde &= g-d+k+r-s-2,\\
\ws &= k-s-2.
\end{aligned}
\]
Even if \(\uw\) is not complete, we take the above expressions as definitions for the quantities \(\rtilde,\ws\). 
\end{notation}

\begin{remark}
    Note that, if the profile of \((g,d,k;r,s)\) is \((\rho,\rho_\bullet,\rho')\), then the profile of the Serre dual tuple \((g,\dtilde,k;\rtilde,\ws)\) is \((\rho',\rho_\bullet,\rho)\).
\end{remark}

\begin{lemma}
\label{lem:serre}
Suppose that
\[
\uw=(\cL,D,V\subset W)\in G^{r,r-s-1}_{d,k}(C)
\]
is complete, or equivalently that
\[
\begin{aligned}
h^0(\cM) &= \rtilde+1,\\
\text{and}\qquad h^0(\cM(-D)) &= \rtilde-\ws.
\end{aligned}
\]
Then, Serre duality induces a rational map
\[
s\colon G^{r,r-s-1}_{d,k}(C)
\dashrightarrow
G^{\rtilde,\rtilde-\ws-1}_{\dtilde,k}(C)
\]
that is an isomorphism in a neighborhood of \(\uw\). 

In particular, if \(G^{\rtilde,\rtilde-\ws-1}_{\dtilde,k}(C)\) has a smooth, complete point of expected dimension, then so does \(G^{r,r-s-1}_{d,k}(C)\).
\end{lemma}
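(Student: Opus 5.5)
The plan is to define the Serre duality map directly on the locus of complete ILS, to check that it is well defined there and is its own inverse, and then to read off the final claim from the equality of expected dimensions.

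\textbf{The open complete locus.} Let \(U\subset G^{r,r-s-1}_{d,k}(C)\) be the locus of complete ILS. Every point of \(G^{r,r-s-1}_{d,k}(C)\) already satisfies
\[
h^0(\cL)\ge r+1 \quad\text{and}\quad h^0(\cL(-D))\ge r-s,
\]
so completeness means both inequalities are equalities. By upper semicontinuity of \(h^0\) in flat families over the universal \((\cL,D)\), completeness is an open condition, and \(U\) is open. Define \(U'\subset G^{\rtilde,\rtilde-\ws-1}_{\dtilde,k}(C)\) in the same way.

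\textbf{The map \(s\) on \(U\).} I would send \(\uw=(\cL,D,V\subset W)\in U\) to the tuple
\[
\bigl(\cM,\,D,\,H^0(\cM(-D))\subset H^0(\cM)\bigr),
\]
where the inclusion is multiplication by the section \(1_D\). Riemann--Roch together with completeness gives
\[
\begin{aligned}
h^0(\cM)&=h^1(\cL(-D))=(r-s)-(d-k)+g-1=\rtilde+1,\\
h^0(\cM(-D))&=h^1(\cL)=(r+1)-d+g-1=g-d+r=\rtilde-\ws.
\end{aligned}
\]
So the image is a complete ILS with invariants \((g,\dtilde,k;\rtilde,\ws)\). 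Completeness on the dual side is exactly the displayed pair of equalities in the statement, which explains the stated equivalence.

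\textbf{Making \(s\) a morphism.} To get a morphism and not just a bijection of points, I would work with \(T\)-points. Over a family of complete ILS, the ranks \(h^0\) and \(h^1\) of \(\cL\) and \(\cL(-D)\) are constant. Cohomology and base change (Grauert) then makes
\[
R^1\pi_*\cL, \quad R^1\pi_*\cL(-D), \quad \pi_*\cM, \quad \pi_*\cM(-D)
\]
locally free and compatible with base change, and relative Serre duality identifies \(\pi_*\cM\) with \((R^1\pi_*\cL(-D))^\vee\). Hence \(s\) is a morphism \(U\to U'\).

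\textbf{The inverse.} The same construction, applied from the dual side, gives a morphism \(U'\to U\). Composing the two returns the original data, because \(K_C\otimes\cM(-D)^\vee=\cL\) and the complete subspaces are recovered as the full spaces of sections. Therefore \(s\colon U\to U'\) is an isomorphism, and in particular an isomorphism in a neighborhood of \(\uw\).

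\textbf{The final claim.} By the remark preceding the lemma, the profile of the dual tuple is \((\rho',\rho_\bullet,\rho)\), so both moduli spaces have the same expected dimension \(\rho_\bullet\). A smooth, complete point of expected dimension in \(G^{\rtilde,\rtilde-\ws-1}_{\dtilde,k}(C)\) lies in \(U'\), and its preimage under \(s\) is a smooth complete point of \(G^{r,r-s-1}_{d,k}(C)\) of the same dimension.

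\textbf{Main obstacle.} The step I expect to require the most care is the scheme-theoretic one: \(s\) must be an isomorphism of schemes, not merely a bijection on points, since the lemma is used to transfer smoothness. For this I would use the degeneracy-locus description of \(G^{r,r-s-1}_{d,k}(C)\) from \cite[\S 4]{chz}. On the complete locus, the Grassmannian conditions become the conditions that the pushforwards have the prescribed ranks, and these conditions are visibly symmetric under Serre duality.
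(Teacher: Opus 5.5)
Your proposal is correct and takes essentially the same route as the paper: completeness is an open condition, Serre duality is an isomorphism on the complete locus (hence near $\uw$), and both spaces have expected dimension $\rho_\bullet$. You also supply the Riemann--Roch bookkeeping and the explicit inverse that the paper's two-line proof leaves implicit.

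One small correction concerns the morphism step. Grauert's theorem needs a reduced base, and non-reduced $T$ are exactly what detect smoothness, so for such $T$ you should get local freeness from the subbundle $W$ itself. Take a two-term complex $K^0\to K^1$ of vector bundles computing $R\pi_*\cL$. Then $W\subset K^0$ has rank $r+1$ and is killed by the differential, and completeness makes the induced map $K^0/W\to K^1$ injective on every fiber, hence a subbundle. It follows that $\pi_*\cL=W$ and $R^1\pi_*\cL$ are locally free and commute with base change, and the same argument works for $\cL(-D)$ and $V$. This is precisely the scheme-theoretic point you flagged as the main obstacle.
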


\begin{proof}
That \(s\) is birational is immediate from the fact that completeness is an open condition in \(G^{r,r-s-1}_{d,k}(C)\). The equivalence of existence of smooth, complete points of expected dimension follows from the fact that both moduli spaces have expected dimension \(\rho_\bullet\).
\end{proof}

Given $\widetilde{\underline{W}} \in G^{\widetilde{r},\widetilde{r}-\widetilde{s}-1}_{\widetilde{d},k}(C)$, in order to deduce \(\Ex(g,d,k;r,s)\), one needs to know in addition that the constructed smooth point \(\underline{W}\in G^{r,r-s-1}_{d,k}(C)\) of expected dimension is bpf. In practice, the simplest way to guarantee this is the following. 

\begin{lemma}\label{lem:check_bpf}
Let \(C\) be a general curve of genus \(g\), and fix \(r,s,d,k\) with $s\le r-2$. Assume that, for all \(d',k'\) satisfying $d'\le d$ and $d'-k'\le d-k$, the moduli space of ILS \(G^{r,r-s-1}_{d',k'}(C)\) is pure of expected dimension. Then, the bpf locus in \(G^{r,r-s-1}_{d,k}(C)\) is dense.
\end{lemma}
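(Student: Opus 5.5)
The plan is a dimension count: stratify the non-bpf locus of \(G^{r,r-s-1}_{d,k}(C)\) according to which of (BPF1), (BPF2) fails, and bound each stratum by the dimension of a moduli space of ILS of smaller degree. The hypothesis guarantees that each such space has its expected dimension.

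\emph{Failure of (BPF1).} Suppose every section of \(V\subset H^0(\cL(-D))\) vanishes at some \(p\in C\). Then \(V\) also vanishes along the divisor \(D+p\) as a subspace of \(W\subset H^0(\cL)\). Hence \((\cL,D+p,V\subset W)\) is an ILS of degree \((d,k+1)\), and the pair \((d',k')=(d,k+1)\) satisfies \(d'\le d\) and \(d'-k'\le d-k\). The stratum maps to \(G^{r,r-s-1}_{d,k+1}(C)\) by \((\cL,D,V\subset W)\mapsto(\cL,D+p,V\subset W)\). The fibers are finite, since \(D\) is recovered from \(D+p\) by removing one of finitely many points. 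This stratum therefore has dimension at most
\[
\rho_\bullet(g,d,k+1;r,s)=\rho_\bullet(g,d,k;r,s)-(r-s-1)\le\rho_\bullet(g,d,k;r,s)-1,
\]
using \(s\le r-2\).

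\emph{Failure of (BPF2).} Suppose every section of \(W\) vanishes at some \(p\in D\). Write \(D=D_0+p\). Then \(W\) descends to \(W\subset H^0(\cL(-p))\), and \(V\) vanishes along \(D_0\) as sections of \(\cL(-p)\). This gives an ILS \((\cL(-p),D_0,V\subset W)\) of degree \((d-1,k-1)\). Here \(d'=d-1\le d\) and \(d'-k'=d-k\), so the hypothesis applies again. To recover the original ILS we must also choose \(p\), which is one parameter. The stratum therefore has dimension at most
\[
\rho_\bullet(g,d-1,k-1;r,s)+1=\rho_\bullet-(r+1)+(r-s-1)+1=\rho_\bullet-s-1+1\le\rho_\bullet .
\]
This is not good enough when \(s=0\), so the count must be refined. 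The extra parameter \(p\) is not free, because \(p\) is forced to lie in the support of \(D\). Better: impose that \(W\) has a base point at \(p\) and keep \(p\) as a point of \(D\). The correct comparison is with the target space in which \(p\) is a base point of \(W\). Base points of \(W\) at \(p\) also make \(p\) a base point of \(\cL(-D)\)-sections only through \(D\), and the cost of a base point is \(r+1\) conditions against \(1\) parameter for \(p\).

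Concretely, I would redo the count without passing through the \(\rho_\bullet\) formula. Map the stratum to \(G^{r,r-s-1}_{d-1,k-1}(C)\times C\) with image contained in \(\{(\cdot,p)\}\), which has dimension \(\rho_\bullet-s\). For \(s\ge1\) this is \(<\rho_\bullet\). In the case \(s=0\), I would instead observe that the map \((\cL,D,V\subset W)\mapsto(\cL(-p),D_0,V\subset W)\) together with \(p\) is injective, and that the image consists of ILS where \(V\) is unconstrained at \(p\). Then I would use the alternative reduction \((\cL(-p),D_0,V\subset W)\in G^{r,r-s-1}_{d-1,k-1}\) with \(p\) arbitrary, which gives dimension \(\rho_\bullet(g,d-1,k-1)+1=\rho_\bullet-s\). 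For \(s=0\) this equals \(\rho_\bullet\), so a further argument is needed. I expect this \(s=0\) boundary case of (BPF2) to be the main obstacle. To handle it, I would try to show that such an ILS cannot be a general point of a component by checking that the dimension count forces \(p\) to additionally be a base point of \(V\) as a section of \(\cL(-D_0-p)\) or similar. Alternatively, I would apply the expected-dimension hypothesis to \((d-1,k)\) by moving \(p\) out of \(D\).

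\emph{Conclusion.} The strata above are constructible and of dimension strictly less than \(\rho_\bullet\). Every component of \(G^{r,r-s-1}_{d,k}(C)\) is pure of dimension \(\rho_\bullet\) by hypothesis. Hence no component is contained in the non-bpf locus, and the bpf locus is dense.
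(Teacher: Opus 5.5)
Your approach is essentially the paper's. The operations you apply on the two strata are exactly the twists (T1) and (T2) from the paper's proof. The conclusion comes from comparing with the expected dimensions of $G^{r,r-s-1}_{d,k+1}(C)$ and $G^{r,r-s-1}_{d-1,k-1}(C)$, which lie in the range covered by the hypothesis. The paper instead twists repeatedly until it reaches a bpf ILS and counts once. Your one-step stratification works equally well.

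As written, however, the argument does not close, because of an arithmetic slip in the (BPF2) count. You correctly write $\rho_\bullet(g,d-1,k-1;r,s)+1=\rho_\bullet-(r+1)+(r-s-1)+1$, but this equals $\rho_\bullet-s-1$, not $\rho_\bullet-s-1+1$. Lowering $d$ by one lowers $\rho$ by $r+1$, and lowering $k$ by one raises $-(r-s-1)k$ by $r-s-1$. Hence $\rho_\bullet(g,d-1,k-1;r,s)=\rho_\bullet-(s+2)$, and the extra parameter $p$ brings the bound to $\rho_\bullet-(s+1)\le\rho_\bullet-1$ for every $s\ge 0$. This is precisely the $-\delta_2(s+1)$ term in the paper's bound. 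Likewise $\dim\bigl(G^{r,r-s-1}_{d-1,k-1}(C)\times C\bigr)=\rho_\bullet-s-1$, not $\rho_\bullet-s$.

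So the ``$s=0$ obstacle'' is spurious, and your final paragraph's claim that all strata have dimension $<\rho_\bullet$ contradicts your own (erroneous) computation. The middle portion should simply be deleted: the remarks that $p$ is ``not free'', the proposed refinements, and moving $p$ out of $D$. In fact $p$ is genuinely free. For any $(\cL',D',V\subset W)\in G^{r,r-s-1}_{d-1,k-1}(C)$ and any $p\in C$, the ILS $(\cL'(p),D'+p,V\subset W)$ fails (BPF2) at $p$, and the naive count with $p$ free already gives a strict inequality.

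To make the maps well-defined without choosing $p$ on the stratum, use incidence loci. Take
\[
Z_1=\{(\uw,p): V\subset H^0(\cL(-D-p))\},\qquad Z_2=\{(\uw,p): p\in D,\ W\subset H^0(\cL(-p))\},
\]
both inside $G^{r,r-s-1}_{d,k}(C)\times C$. The map $Z_1\to G^{r,r-s-1}_{d,k+1}(C)$ is finite-to-one, and the map $Z_2\to G^{r,r-s-1}_{d-1,k-1}(C)\times C$ is injective. With the arithmetic fixed, this gives a complete proof, slightly cleaner than the paper's iterated twist.
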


\begin{proof}
The proof is very similar to (but easier than) that of \cite[Proposition 2.6.2]{cl_complete}. Suppose for sake of contradiction that there is an irreducible component \(G\subset G^{r,r-s-1}_{d,k}(C)\), on which a general point \(\uw\in G\) is not bpf. Fix such a general point \(\uw=(\cL,D,V\subset W)\). Consider the following two operations on ILS (\cite[Definition 2.5.1]{cl_complete}).
 \begin{enumerate}
    \item[(T1)] If every section of $V\subset H^0(C,\cL(-D))$ vanishes at $p\in C$, then replace $D$ with $D+p$, leaving $\cL,V,W$ the same. We now have $(\cL,D,V\subset W)\in G^{r,r-s-1}_{d,k+1}(C)$.
    
    \item[(T2)] If every section of $W$ vanishes at $p\in \Supp(D)$, then replace $D$ with $D-p$ and $\cL$ with $\cL(-p)$. The subspaces $V\subset W$ are identified with subspaces of the new $H^0(C,\cL)$ in the obvious way. We now have $(\cL,D,V\subset W)\in G^{r,r-s-1}_{d-1,k-1}(C)$.   
\end{enumerate}

We may apply the operation (T1) a total of \(\delta_1\geq 0\) times, at various points \(p\in C\), until property (BPF1) is satisfied, and then apply operation (T2) \(\delta_2\geq 0\) times, until \(\uw\) is bpf. By assumption, \(\delta_1,\delta_2\) are not both \(0\). After possibly passing to an étale neighborhood of \(G\)\footnote{In order to make sense of the operations (T1), (T2) in a neighborhood of \(\uw\), one needs a consistent choice of point \(p\in C\) over which the twisting can be applied. Such a choice may only exist after passing to an étale open neighborhood of \(\uw\); see also the footnote to \cite[Proof of Proposition 2.6.2]{cl_complete}.}, this sequence of operations defines a rational map
\[
\tau\colon G\dashrightarrow G^{r,r-s-1}_{d-\delta_2,k+\delta_1-\delta_2}(C),
\]
dominating the bpf locus. The fibers of \(\tau\) have dimension \(\delta_2\), corresponding to choices of twisted points \(p\in C\) in the operations (T2). Therefore, we have
\[
\begin{aligned}
\dim(G)&\le\delta_2+\dim\bigl(G^{r,r-s-1}_{d-\delta_2,k+\delta_1-\delta_2}(C)\bigr) \\
&=\rho_\bullet-\delta_1(r-s-1)-\delta_2(s+1) \\
&<\rho_\bullet,
\end{aligned}
\]
which is a contradiction.
\end{proof}

\begin{corollary}\label{cor:char_0_serre}
Suppose that \(\charac(\bK)=0\), and that \(G^{\rtilde,\rtilde-\ws-1}_{\dtilde,k}(C)\) contains a smooth, complete point of expected dimension. Then, \(\Ex(g,d,k;r,s)\) holds.
\end{corollary}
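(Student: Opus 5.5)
Starting from the smooth, complete point $\widetilde{\uw}\in G^{\rtilde,\rtilde-\ws-1}_{\dtilde,k}(C)$ of expected dimension, apply Lemma \ref{lem:serre} to obtain a point $\uw=(\cL,D,V\subset W)\in G^{r,r-s-1}_{d,k}(C)$. It is smooth and complete, lies in a unique irreducible component $G$, and $G$ has dimension $\rho_\bullet=\rho_\bullet(g,d,k;r,s)$ (the Serre dual tuple has the same $\rho_\bullet$). Completeness and smoothness are open conditions, so a general point of $G$ is still smooth and complete. It therefore suffices to show that a general point of $G$ is bpf. Such a point lies in the image of $\psi\colon\cM_{(d,k)}(C,X_{r,s})^\circ\to G^{r,r-s-1}_{d,k}(C)$. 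Since $\psi$ is smooth of relative dimension $\dim\Aut(X_{r,s})$ onto the bpf locus, the preimage of the smooth locus of $G$ is smooth of dimension $\rho_\bullet+\dim\Aut(X_{r,s})=\chi(C,f^*T_{X_{r,s}})$. Hence $h^0(C,f^*T_{X_{r,s}})=\chi$, so $H^1=0$ and $f$ is nice. Non-degeneracy holds because $V,W$ have the correct dimensions and the associated maps are given by bases of them.

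To get bpf, I would invoke Lemma \ref{lem:check_bpf}. Its hypothesis holds because of Theorem \ref{thm:farkas}, which applies since $\charac(\bK)=0$: every $G^{r,r-s-1}_{d',k'}(C)$ is either empty or pure of dimension $\rho_\bullet(g,d',k';r,s)$, and both cases are allowed in the dimension count in the proof of Lemma \ref{lem:check_bpf}. That lemma then says the bpf locus is dense in $G^{r,r-s-1}_{d,k}(C)$. In particular it is dense in the component $G$, so a general point of $G$ is bpf, smooth, and of expected dimension.

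The only subtle points are the following. First, the relevant ILS spaces must have $s\le r-2$, which is the standing assumption for $(r,s)$; the dual tuple is used only through Lemma \ref{lem:serre}. Second, one needs density in $G$ itself rather than merely somewhere in the moduli space. This follows because Lemma \ref{lem:check_bpf} is proved component by component. Third, the smooth locus of $G$ must intersect the bpf locus. This holds because both are dense open subsets of the irreducible $G$. I expect no step to be a genuine obstacle; the main care is checking that the identification of smooth points of $G$ with nice maps via $\psi$ is compatible with scheme structures, using that $\psi$ is smooth.
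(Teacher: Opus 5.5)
Your proposal is correct and follows the paper's proof. Lemma \ref{lem:serre} gives a smooth point of expected dimension in $G^{r,r-s-1}_{d,k}(C)$, Theorem \ref{thm:farkas} supplies the hypothesis of Lemma \ref{lem:check_bpf}, and so a general (hence bpf) point of that component underlies a nice map. You additionally spell out, via the smoothness of $\psi$, why such a point gives $H^1(C,f^*T_{X_{r,s}})=0$, which the paper leaves implicit.
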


\begin{proof}
By Lemma \ref{lem:serre}, there exists a non-empty open subset \(U\subset G^{r,r-s-1}_{d,k}(C)\) in which a general point is smooth of expected dimension. If \(\charac(\bK)=0\), then by Theorem \ref{thm:farkas}, the hypothesis of Lemma \ref{lem:check_bpf} holds. In particular, a general point of \(U\) is also bpf, so gives rise to a nice map.
\end{proof}

\subsection{Terminal families}\label{sec:terminal_base}

In order to streamline the remaining arguments, we will assume for the remainder of this section that \(\charac(\bK)=0\), so that we may apply Corollary \ref{cor:char_0_serre}. We explain in \S\ref{sec:positive_char} how to remove this hypothesis.

\begin{proposition}\label{prop:serre_Ia}
Let \((g,d,k;r,s)\) be a terminal tuple of type (Ia), Definition \ref{def:terminal}. Then, \(\Ex(g,d,k;r,s)\) holds.
\end{proposition}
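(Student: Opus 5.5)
The plan is to pass to the Serre dual and apply Corollary \ref{cor:char_0_serre}. That reduces \(\Ex(g,d,k;r,s)\) to finding a smooth, complete point of expected dimension in \(G^{\rtilde,\rtilde-\ws-1}_{\dtilde,k}(C)\). For family (Ia) we have \(g=r+1\) and \(d=2r\), and Notation \ref{notation:serre} gives
\[
\dtilde=(2g-2)-d+k=k,\qquad \rtilde=g-d+k+r-s-2=k-s-1,\qquad \ws=k-s-2=\rtilde-1 .
\]
So the dual moduli space is \(G^{\rtilde,0}_{k,k}(C)\), and since \(k\geq s+2\) we have \(\rtilde\geq 1\). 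A point of this space consists of:
\begin{itemize}
\item a line bundle \(\cM\) of degree \(k\) and an \((\rtilde+1)\)-dimensional \(W\subset H^0(\cM)\);
\item an effective divisor \(D\) of degree \(k\);
\item a one-dimensional \(V\subset W\) whose sections vanish along \(D\).
\end{itemize}
Since \(\deg D=\deg\cM\), the last condition forces \(\cM\cong\cO(D)\) and \(V=\langle 1_D\rangle\). Hence such a point is the same as a linear series \((\cM,W)\in G^{\rtilde}_k(C)\) together with a point of \(\bP(W)\), namely the divisor of a section.

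The first step is to identify \(G^{\rtilde,0}_{k,k}(C)\) scheme-theoretically with the projectivization \(\bP(\mathcal{W})\) of the tautological rank \(\rtilde+1\) bundle over \(G^{\rtilde}_k(C)\). The map sends \((\cM,D,V\subset W)\) to \(((\cM,W),[V])\), and the inverse sends a section to its divisor. I would verify this on \(T\)-points for an arbitrary base \(T\): a rank-one subbundle of the pulled-back \(\mathcal{W}\) determines a relative effective Cartier divisor of degree \(k\), because a nonzero section of a degree-\(k\) bundle on a smooth curve cuts out a relative divisor. Conversely, a relative divisor of degree \(k\) on which a rank-one \(V\subset W\) vanishes forces \(V\) to be generated by the canonical section.

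The second step is non-emptiness and smoothness, using the fact that \(C\) is general of genus \(r+1\). We have
\[
\rho(r+1,\rtilde,k)=r+1-(k-s)(r-s)\geq 0 \iff k\leq s+1+\frac{s+1}{r-s},
\]
which is exactly the upper bound defining (Ia). So \(G^{\rtilde}_k(C)\) is non-empty by Theorem \ref{thm:BN_existence}. It is smooth of dimension \(\rho(r+1,\rtilde,k)\) by the Gieseker-Petri theorem, available here since \(\charac(\bK)=0\). Therefore \(\bP(\mathcal{W})\) is smooth of dimension \(\rho(g,\rtilde,k)+\rtilde\). This equals the expected dimension \(\rho_\bullet\) of the dual space: the dual tuple has profile \((\rho',\rho_\bullet,\rho)\), and with \(\ws=\rtilde-1\) the formula for \(\rho_\bullet\) reduces to \(\rho(g,\rtilde,k)+\rtilde\cdot 1-0\).

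The third step is completeness at a general point. On each component of \(G^{\rtilde}_k(C)\), a general \((\cM,W)\) has \(h^0(\cM)=\rtilde+1\), because \(W^{\rtilde+1}_k(C)\) has strictly smaller dimension (or is empty) by the Brill-Noether theorem; hence \(W=H^0(\cM)\). Also \(V=\langle 1_D\rangle=H^0(\cO)=H^0(\cM(-D))\) automatically. So a general point is a smooth, complete point of expected dimension, and Corollary \ref{cor:char_0_serre} gives \(\Ex(g,d,k;r,s)\).

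The main obstacle I expect is the scheme-theoretic identification in the first step. Smoothness of the ILS moduli space must hold as a scheme, not merely set-theoretically, so I would check carefully that the degeneracy-locus structure of \(G^{\rtilde,0}_{k,k}(C)\) agrees with the \(\bP^{\rtilde}\)-bundle structure over \(G^{\rtilde}_k(C)\), for instance by comparing tangent spaces at a point.
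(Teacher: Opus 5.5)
Your proposal is correct and follows the paper's proof essentially step for step. Both use the Serre dual tuple $(r+1,k,k;k-s-1,k-s-2)$, identify $G^{\rtilde,0}_{k,k}(C)$ with the projectivized universal linear series over $G^{\rtilde}_k(C)$, get smoothness of dimension $\rho'+\rtilde=\rho_\bullet$ from Brill--Noether existence and Gieseker--Petri, check completeness at a general point, and conclude with Corollary \ref{cor:char_0_serre}. The one step to tighten is completeness: knowing $\dim W^{\rtilde+1}_k(C)<\rho(g,\rtilde,k)$ is not enough, because the fibers of $G^{\rtilde}_k(C)\to W^{\rtilde}_k(C)$ jump over that locus. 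On the stratum where $h^0(\cM)=\rtilde+1+\delta$ you should compare $\rho(g,\rtilde+\delta,k)+(\rtilde+1)\delta$ with $\rho(g,\rtilde,k)$; the deficit is $\delta(g-k+\rtilde+\delta)=\delta(r-s+\delta)>0$, so your conclusion stands. This comparison is exactly what the paper's appeal to the Brill--Noether special range $g-\dtilde+\rtilde=r-s\geq 0$ encodes.
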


\begin{proof}
The Serre dual tuple is
\[
(g,\dtilde,k;\rtilde,\ws)
=
(r+1,k,k;k-s-1,k-s-2).
\]
Note in particular that \(\dtilde=k\) and that \(\ws=\rtilde-1\). Therefore, the ILS space \(G^{\rtilde,\rtilde-\ws-1}_{\dtilde,k}(C)=G^{\rtilde,0}_{\dtilde,\dtilde}(C)\) is identified with the projectivization of the universal linear series over the usual moduli space of linear series \(G^{\rtilde}_{\dtilde}(C)\). In particular, by the Brill-Noether existence and Gieseker-Petri theorems, \(G^{\rtilde,\rtilde-\ws-1}_{\dtilde,k}(C)\) is non-empty and smooth of dimension
\[
\rho(g,\rtilde,\dtilde)+\rtilde
=
\rho'+(k-s-1)
=
\rho_\bullet,
\]
where we have applied Lemma \ref{lem:rho_0_profile}(a).

Therefore, by Corollary \ref{cor:char_0_serre}, it suffices to show that \(G^{\rtilde,\rtilde-\ws-1}_{\dtilde,k}(C)\) contains a \emph{complete} point. To see this, note that \((g,\rtilde,\dtilde)\) lies in the ``Brill-Noether special'' range:
\[
g-\dtilde+\rtilde=r-s\geq 0.
\]
Therefore, by \cite[Lemma IV.3.5]{acgh}, a general line bundle \(\cM\) on \(C\) of degree \(k\) with \(h^0(C,\cM)\geq \rtilde+1\) has in fact \(h^0(C,\cM)=\rtilde+1\). In particular, a general point of \(G^{\rtilde,\rtilde-\ws-1}_{\dtilde,k}(C)\) has \(h^0(C,\cM)=\rtilde+1\), as well as
\[
h^0(C,\cM(-D))=h^0(C,\cO_C)=1.
\]
so is complete.
\end{proof}

\begin{proposition}\label{prop:serre_III}
Let \((g,d,k;r,s)\) be a terminal tuple of type (III), Definition \ref{def:terminal}. Then, \(\Ex(g,d,k;r,s)\) holds.
\end{proposition}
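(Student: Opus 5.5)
The plan is to follow the same Serre duality mechanism as in Proposition \ref{prop:serre_Ia}: compute the Serre dual tuple, recognize the dual ILS space as essentially a projectivized universal linear series, and then invoke Corollary \ref{cor:char_0_serre}. Write \(g=(r-s)(q+1)\), \(d=(r-s)(q+2)+s-1\), \(k=s+1+q\). Notation \ref{notation:serre} gives
\[
(g,\dtilde,k;\rtilde,\ws)=\bigl((r-s)(q+1),\,(r-s+1)q,\,s+1+q;\,q,\,q-1\bigr).
\]
In particular \(\rtilde-\ws-1=0\). So a point of \(G^{\rtilde,0}_{\dtilde,k}(C)\) consists of a linear series \((\cM,\widetilde W)\in G^{q}_{\dtilde}(C)\), a one-dimensional subspace \(\langle \sigma\rangle\subset \widetilde W\), and an effective divisor \(D\) of degree \(k\) contained in \(\mathrm{div}(\sigma)\). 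Moreover \(g-\dtilde+\rtilde=r-s\), so \(\rho(g,\rtilde,\dtilde)=(r-s)(q+1)-(q+1)(r-s)=0\).

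First I would show that the dual ILS space has a smooth point of dimension \(\rho_\bullet=q\) (Lemma \ref{lem:rho_prime_0_profile}). By Brill-Noether existence and Gieseker-Petri, \(G^q_{\dtilde}(C)\) is a nonempty finite reduced set of points. Since \(\rho(g,q+1,\dtilde)<0\), each such series is complete and base-point-free. Over one of them, the choices of \(\sigma\) form \(\bP^q\). By Bertini (characteristic \(0\)), a general \(\sigma\) has reduced divisor, consisting of \(\dtilde\) distinct points. Choosing \(D\) as a sub-divisor of \(\mathrm{div}(\sigma)\) is then a finite étale choice locally over \(\bP^q\). Hence near such a point \(G^{\rtilde,0}_{\dtilde,k}(C)\) is étale over an open subset of \(\bP^q\), so it is smooth of dimension \(q=\rho_\bullet\), as expected.

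Next I would check completeness. We already have \(h^0(\cM)=q+1=\rtilde+1\). For the other condition, \(\cM(-D)=\cO_C(E)\), where \(E=\mathrm{div}(\sigma)-D\) is effective of degree \(\dtilde-k=(r-s)q-(s+1)=\rho\), and \(0\le\rho\le r-s-1\) by Lemma \ref{lem:rho_prime_0_profile}. If \(h^0(\cO(E))\ge 2\), then \(C\) would carry a \(g^1_\rho\). On a general curve this forces \(\rho(g,1,\rho)\ge 0\), i.e. \(\rho\ge g/2+1\). But \(g\ge 2(r-s)\), so \(g/2+1>r-s-1\ge\rho\), a contradiction. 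Hence \(h^0(\cM(-D))=1=\rtilde-\ws\), and the point is complete. Corollary \ref{cor:char_0_serre} then gives \(\Ex(g,d,k;r,s)\).

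I expect the main obstacle to be the scheme-theoretic smoothness of \(G^{\rtilde,0}_{\dtilde,k}(C)\) at the chosen point. One has to check that its natural degeneracy-locus structure agrees with the étale-over-\(\bP^q\) description, and that there are no nonreduced directions coming from deformations of \(D\) inside \(\mathrm{div}(\sigma)\). I would handle this by a first-order computation. When \(\mathrm{div}(\sigma)\) is reduced, a tangent vector to the ILS space is a deformation of \((\cM,\widetilde W,\sigma)\) together with a compatible motion of the points of \(D\). The motion of \(D\) is forced by the motion of \(\sigma\), and the deformations of \((\cM,\widetilde W)\) vanish by Gieseker-Petri. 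This bounds the tangent space dimension by \(q\).
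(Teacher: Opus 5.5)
Your proposal is correct. The Serre-dual reduction and the smoothness argument are the same as the paper's, phrased via sections $\sigma$ rather than via the finite cover $\gamma\colon G_{\widetilde W}\to(\bP^{\rtilde})^\vee$ of hyperplanes. That argument consists of the finitely many reduced points of $G^q_{\dtilde}(C)$, the choice of $\sigma$ parametrized by $\bP^q$, Bertini to make $\mathrm{div}(\sigma)$ reduced, and the choice of $D\le\mathrm{div}(\sigma)$ being finite \'etale there.

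Where you genuinely differ is the completeness check. The paper rephrases $h^0(\cM(-D))=1$ as the condition that $H$ be the unique hyperplane containing $f(D)$. It arranges this by letting $D$ contain $\rtilde=q$ general points of $C$, which is possible since $k>q, and then appeals to openness of completeness. You instead observe that $\cM(-D)\cong\cO_C(E)$, where $E=\mathrm{div}(\sigma)-D$ is effective of degree $\dtilde-k=\rho\le r-s-1$. Such an $E$ cannot move on a general curve of genus $g=(r-s)(q+1)\ge 2(r-s)$, by the Brill--Noether bound on gonality.

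Your route buys a stronger statement: every point of $G^{\rtilde,0}_{\dtilde,k}(C)$ is complete, not just a general one. So you never need to reconcile a $D$ containing $q$ general points with the generic hyperplane needed for Bertini. It also uses only the non-existence half of Brill--Noether theory, which holds in any characteristic. The paper's argument instead stays within the geometry of the map $f\colon C\to\bP^{\rtilde}$ and uses only its non-degeneracy.

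One small slip: base-point-freeness of $\widetilde W$ follows from $\rho(g,q,\dtilde-1)<0$, not from $\rho(g,q+1,\dtilde)<0$. The latter gives completeness of $\widetilde W$.
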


\begin{proof}
Recalling that we set \(q=\left\lceil \frac{s+1}{r-s}\right\rceil\), the Serre dual tuple is
\[
(g,\dtilde,k;\rtilde,\ws)
=
\bigl(g,(r-s+1)q,s+1+q;q,q-1\bigr),
\]
where we have also applied Lemma \ref{lem:rho_prime_0_profile}. Note in particular that \(\ws=\rtilde-1\).

Therefore, the moduli space of ILS \(G^{\rtilde,\rtilde-\ws-1}_{\dtilde,k}(C)=G^{\rtilde,0}_{\dtilde,k}(C)\) parameterizes the data of:
\begin{itemize}
    \item a point \(\widetilde{W}\in G^{\rtilde}_{\dtilde}(C)\),
    \item an effective divisor \(D\subset C\) of degree \(k=s+1+q\), and
    \item a line \(\ell\subset \widetilde{W}\), consisting of sections vanishing along \(D\).
\end{itemize}
We have
\[
\dim\bigl(G^{\rtilde}_{\dtilde}(C)\bigr)=\rho(g,\rtilde,\dtilde)=\rho'=0.
\]
Therefore, \(G^{\rtilde}_{\dtilde}(C)\) consists of finitely many smooth points, of which we may choose one, \(\widetilde{W}\in G^{\rtilde}_{\dtilde}(C)\). Then, \(\widetilde{W}\) is bpf, because \(\rho(g,\rtilde,\dtilde-1)<\rho(g,\rtilde,\dtilde)=0\), and complete, because \(\rho(g,\rtilde+1,\dtilde)<\rho(g,\rtilde,\dtilde)=0\). Let \(f\colon C\to \bP^{\rtilde}\) be a map with underlying linear series \(\widetilde{W}\). 

The data of a line \(\ell\subset \widetilde{W}\) of sections vanishing along \(D\) is equivalently the data of a hyperplane \(H\subset \bP^{\rtilde}\) for which \(D\subset f^{-1}(H)\). Therefore, the open and closed subscheme \(G_{\widetilde{W}}\subset G^{\rtilde,\rtilde-\ws-1}_{\dtilde,k}(C)\), consisting of ILS with underlying \(\widetilde{W}\in G^{\rtilde}_{\dtilde}(C)\), admits the structure of a finite cover
\[
\gamma\colon G_{\widetilde{W}}\to (\bP^{\rtilde})^\vee
\]
of the space of hyperplanes \(H\subset \bP^{\rtilde}\). In particular, \(G_{\widetilde{W}}\) is pure of dimension \(\rtilde=q=\rho_\bullet\), the expected. The last equality used Lemma \ref{lem:rho_prime_0_profile}.

By Bertini smoothness, a general hyperplane \(H\subset \bP^{\rtilde}\) has the property that \(f^{-1}(H)\) is reduced, which implies that every point of \(\gamma^{-1}(H)\) is a smooth point of \(G_{\widetilde{W}}\). In particular, any general point of \(G_{\widetilde{W}}\) is smooth of expected dimension.

Finally, it suffices to show that a general point of \(G_{\widetilde{W}}\) is also complete. This amounts to the statement that \(H\subset \bP^{\rtilde}\) is the \emph{unique} hyperplane containing \(D\). Because
\[
k=s+1+q>q=\rtilde,
\]
one can arrange for \(D\) to contain a collection of \(\rtilde\) general points on \(C\), which in particular lie on a unique hyperplane. Therefore, we conclude by Corollary \ref{cor:char_0_serre}.
\end{proof}

For the remaining terminal tuples, of types (Ib) and (II), we do not obtain unconditional proofs of existence. Rather, we reduce existence to that of the Serre dual tuple, which has \(\ws=\rtilde-2\).

\begin{proposition}\label{prop:serre_Ib}
Let \((g,d,k;r,s)\) be a terminal tuple of type (Ib), Definition \ref{def:terminal}. Let
\[
(g,\dtilde,k;\rtilde,\ws)
=
(2r+2,r+2+k,k;k-s,k-s-2)
\]
be the Serre dual tuple. If \(\Ex(g,\dtilde,k;\rtilde,\ws)\) holds, then so does \(\Ex(g,d,k;r,s)\).
\end{proposition}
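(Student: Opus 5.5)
Assume $\Ex(g,\dtilde,k;\rtilde,\ws)$ holds. The plan is to turn the resulting nice map on the dual side into a smooth, complete ILS of expected dimension, and then apply Corollary \ref{cor:char_0_serre}. First, I check the numerics. For (Ib) we have $g=2r+2$ and $d=3r$, so $\dtilde=2g-2-d+k=r+2+k$, $\rtilde=g-d+k+r-s-2=k-s$, and $\ws=k-s-2$, matching the statement. By the remark on Serre duality, the dual profile is $(\rho',\rho_\bullet,\rho)=(\rho',\rho_\bullet,0)$. In particular, the dual tuple has $\rho'(g,\dtilde,k;\rtilde,\ws)=\rho(g,r,d)=0$, so its secondary series $G^{1}_{\dtilde-k}(C)=G^1_{r+2}(C)$ has $\rho(2r+2,1,r+2)=0$.

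Let $\widetilde f\colon C\to X_{\rtilde,\ws}$ be the given nice map out of a general $C$. It defines a bpf ILS $\widetilde{\uw}=(\cM,D,\widetilde V\subset\widetilde W)\in G^{\rtilde,1}_{\dtilde,k}(C)$. Since $\widetilde f$ is nice, $\cM_{(\dtilde,k)}(C,X_{\rtilde,\ws})^\circ$ is smooth of expected dimension at $\widetilde f$. Because $\psi$ is smooth with fibers $\Aut(X_{\rtilde,\ws})$, the point $\widetilde{\uw}$ is a smooth point of expected dimension $\rho_\bullet$ of the ILS space. Then Lemma \ref{lem:serre} (applied with the roles of the two tuples swapped) and Corollary \ref{cor:char_0_serre} reduce everything to showing that a general point of the component through $\widetilde{\uw}$ is complete. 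This means $\widetilde W=H^0(\cM)$, i.e.\ $h^0(\cM)=\rtilde+1$, and $\widetilde V=H^0(\cM(-D))$, i.e.\ $h^0(\cM(-D))=2$.

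The second equality is the easy one. $(\cM(-D),\widetilde V)$ is a point of $G^1_{r+2}(C)$ with $\rho=0$. On a general curve, $G^2_{r+2}(C)$ is empty because $\rho(2r+2,2,r+2)<0$. Hence $h^0(\cM(-D))=2$.

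For the first equality, I would argue by dimension count along the component $G$ through $\widetilde{\uw}$. Suppose every point has $h^0(\cM)\ge \rtilde+2$. Then the sub-ILS $(\cM,D,\widetilde V\subset\widetilde W)$ is obtained by choosing a point of $G^{\rtilde+1,1}_{\dtilde,k}(C)$ and then a hyperplane of $H^0(\cM)$ containing $\widetilde V$. Farkas's Theorem \ref{thm:farkas} gives this space dimension $\rho_\bullet(g,\dtilde,k;\rtilde+1,\ws+1)$. The count of hyperplanes containing a fixed $2$-plane in an $(\rtilde+2)$-dimensional space is $\rtilde-1$. I would then check that $\rho_\bullet(g,\dtilde,k;\rtilde+1,\ws+1)+(\rtilde-1)<\rho_\bullet$; this follows from $\rho(g,\rtilde+1,\dtilde)=\rho(g,\rtilde,\dtilde)-(g-\dtilde+\rtilde+\rtilde+2)$. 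This forces $\dim G<\rho_\bullet$, a contradiction, so a general point of $G$ has $h^0(\cM)=\rtilde+1$. Alternatively, one can argue that $G^{\rtilde+1}_{\dtilde}$ has strictly smaller dimension and that the fibers over it are controlled.

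The main obstacle is this last inequality and the bookkeeping of the fiber dimension over $G^{\rtilde+1,1}$. If it fails in edge cases (e.g.\ $s=r-2$, where Remark \ref{rmk: rho_b=0-iff-rho'=0} applies), I would instead treat those cases directly, using that $\rho'=0$ pins down finitely many $g^1$'s.
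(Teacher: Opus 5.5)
Your proof is correct, but the key completeness step is argued differently from the paper. Both proofs reduce via Corollary \ref{cor:char_0_serre} to completeness on the dual side, and both handle $\widetilde V$ essentially the same way (using $\rho(2r+2,1,r+2)=0$).

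For $\widetilde W$, the paper fixes $(\cM,D,\widetilde V)$. It observes that if $h^0(\cM)\ge\rtilde+2$, the choices of $\widetilde W\supset\widetilde V$ alone form a family of dimension at least $\rtilde-1$. By Theorem \ref{thm:farkas} this is at most $\dim G^{\rtilde,1}_{\dtilde,k}(C)=\rho_\bullet$. Rearranging gives $k\le s+1+\frac{s+1}{r-s}$, contradicting the lower bound defining family (Ib). So the paper shows that \emph{every} point is complete, applying Farkas only to the space itself but relying on the (Ib) inequality on $k$.

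You instead bound the entire incomplete locus through an incidence correspondence over the auxiliary space $G^{\rtilde+1,1}_{\dtilde,k}(C)$, and apply Farkas there. Since $g-\dtilde+\rtilde=r-s$, your estimate comes out to
\[
\rho_\bullet(g,\dtilde,k;\rtilde+1,\ws+1)+(\rtilde-1)=\rho_\bullet-(r-s)-1<\rho_\bullet .
\]
This gives completeness only for a general point of each component. That is all Corollary \ref{cor:char_0_serre} needs, and the argument never uses the lower bound on $k$ from family (Ib). Your worry about edge cases such as $s=r-2$ is therefore unfounded: the inequality holds uniformly, with margin $r-s+1$.

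One small correction: when $h^0(\cM)>\rtilde+2$, ``a hyperplane of $H^0(\cM)$'' should read ``a hyperplane of some $(\rtilde+2)$-dimensional $W'$ with $\widetilde W\subset W'\subseteq H^0(\cM)$.'' The correspondence of pairs $(\widetilde{\uw},W')$, with fibers $\bP^{\rtilde-1}$ over $G^{\rtilde+1,1}_{\dtilde,k}(C)$, then gives your bound regardless of the value of $h^0(\cM)$.

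The trade-off is this. The paper's route needs dimensional control only of $G^{\rtilde,1}_{\dtilde,k}(C)$ itself, which is exactly the input later replaced by Proposition \ref{prop:hurwitz} in positive characteristic. Your route needs an additional transversality statement for $G^{\rtilde+1,1}_{\dtilde,k}(C)$, but is insensitive to the numerical constraints of family (Ib).
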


\begin{proof}
By Corollary \ref{cor:char_0_serre}, it suffices to show that any point \((\cM,D,\widetilde{V}\subset \widetilde{W})\in G^{\rtilde,\rtilde-\ws-1}_{\dtilde,k}(C)=G^{\rtilde,1}_{\dtilde,k}(C)\) is complete. Note first that \(\widetilde{V}\in G^1_{\dtilde-k}(C)\), which has dimension \(\rho'=0\). Therefore, we must have \(\widetilde{V}=H^0(C,\cM(-D))\).

Suppose now, for sake of contradiction, that \(h^0(C,\cM)>\rtilde+1\), so that \(\widetilde{W}\subsetneq H^0(C,\cM)\). Then, there is a family of dimension at least \(\rtilde-1\) of subspaces \(\widetilde{W}\subset H^0(C,\cM)\) containing \(\widetilde{V}\). In particular, we must have
\[
\rtilde-1\leq \dim\bigl(G^{\rtilde,\rtilde-\ws-1}_{\dtilde,k}(C)\bigr)=\rho_\bullet=(r-s)(s+1)-(r-s-1)k,
\]
by Theorem \ref{thm:farkas}. However, substituting \(\rtilde=k-s\) and rearranging yields \(k\leq s+1+\frac{s+1}{r-s}\), which contradicts the inequality on \(k\) required in family (Ib).
\end{proof}

\begin{proposition}\label{prop:serre_II}
Let \((g,d,k;r,s)\) be a terminal tuple of type (II), Definition \ref{def:terminal}. Let
\[
(g,\dtilde,k;\rtilde,\ws)=(\rho+2r+2,\rho+r+k+2,k;k-s,k-s-2)
\]
be the Serre dual tuple. If \(\Ex(g,\dtilde,k;\rtilde,\ws)\) holds, then so does \(\Ex(g,d,k;r,s)\).
\end{proposition}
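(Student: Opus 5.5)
The plan is to imitate the proof of Proposition~\ref{prop:serre_Ib}. By Corollary~\ref{cor:char_0_serre}, applied with the roles of the tuple and its Serre dual exchanged, it suffices to show the following: $G^{\rtilde,\rtilde-\ws-1}_{\dtilde,k}(C)=G^{\rtilde,1}_{\dtilde,k}(C)$ contains a smooth, complete point of expected dimension. Here $\rtilde=k-s$ and $\ws=k-s-2$, so $\rtilde-\ws-1=1$.

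The hypothesis $\Ex(g,\dtilde,k;\rtilde,\ws)$ provides a nice map, and hence a smooth bpf point $\widetilde{\uw}=(\cM,D,\widetilde V\subset\widetilde W)$ of expected dimension. By the Remark on Serre dual profiles and Lemma~\ref{lem:rho_bullet_0_profile}, the dual profile is $(2(s+1)-k,\,0,\,\rho)$. In particular $\rho_\bullet=0$, so by Theorem~\ref{thm:farkas} the whole space $G^{\rtilde,1}_{\dtilde,k}(C)$ is finite. The strategy is then to show that \emph{every} point of this finite space is complete, by exhibiting a positive-dimensional family through any non-complete point.

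First I will show $\widetilde W=H^0(C,\cM)$. Suppose instead that $h^0(C,\cM)>\rtilde+1$. Fix $D$, $\cM$ and $\widetilde V$, and vary $\widetilde W$ among the $(\rtilde+1)$-dimensional subspaces of $H^0(C,\cM)$ containing $\widetilde V$. These form a Grassmannian of dimension at least $(\rtilde-1)\cdot 1=k-s-1$. The condition $\rho\ge 0$ in family (II) gives $k\ge s+2$, so this family is positive-dimensional. Every member is again an ILS of the same type, contradicting finiteness. This is the same count as in Proposition~\ref{prop:serre_Ib}, now with right-hand side $\rho_\bullet=0$.

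Next I will show $\widetilde V=H^0(C,\cM(-D))$; I expect this to be the main point needing care, since unlike in (Ib) the relevant space $G^1_{\dtilde-k}(C)$ has dimension $\rho\ge 0$ rather than $0$. The step above gives $\widetilde W=H^0(C,\cM)\supseteq H^0(C,\cM(-D))$. Hence if $h^0(C,\cM(-D))\ge 3$, every $2$-dimensional subspace $\widetilde V'\subset H^0(C,\cM(-D))$ satisfies $\widetilde V'\subset\widetilde W$. Such a $\widetilde V'$ therefore defines an ILS $(\cM,D,\widetilde V'\subset\widetilde W)$ with the same invariants. The Grassmannian $\Gr(2,h^0(\cM(-D)))$ has dimension at least $2$, again contradicting finiteness.

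Thus $\widetilde{\uw}$ is a smooth, complete point of expected dimension, and Lemma~\ref{lem:serre} transports it to $G^{r,r-s-1}_{d,k}(C)$. Corollary~\ref{cor:char_0_serre} then yields bpf-ness of a general point, and hence $\Ex(g,d,k;r,s)$. The remaining checks are routine: that $\dtilde=\rho+r+k+2$ and $\rtilde,\ws$ are as stated, using $g=\rho+2r+2$ and $d=\rho+3r$; and that $\ws\ge -1$, which holds since $k\ge s+2$.
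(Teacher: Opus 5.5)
Your argument is correct. Its skeleton matches the paper's: reduce via Corollary \ref{cor:char_0_serre} to completeness of points of $G^{\rtilde,1}_{\dtilde,k}(C)$, then split non-completeness into ``$\widetilde W$ can be enlarged'' versus ``$\widetilde V$ can be enlarged''. (The corollary applies directly as stated, so no exchange of roles is needed.)

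The difference is in how you reach the contradiction. The paper changes the invariants. An enlarged $\widetilde W$ gives an ILS with invariants $(\rtilde+1,\ws+1)$, and an enlarged $\widetilde V$ gives one with $(\rtilde,\ws-1)$. It computes the new expected dimensions, $(s-r)+(s-k)<0$ and $-s-4<0$, and invokes the emptiness half of Theorem \ref{thm:farkas}.

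You keep the invariants fixed and produce a positive-dimensional Grassmannian of ILS of the same type. This contradicts the dimension half of the theorem, since $\rho_\bullet=0$.

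What your route buys:
\begin{itemize}
\item It avoids the two $\rho_\bullet$ computations and reuses the mechanism of Proposition \ref{prop:serre_Ib}.
\item It localizes. Both of your Grassmannian families pass through $\widetilde{\uw}$ itself, so you only need $\widetilde{\uw}$ to be isolated. That already follows from niceness, which makes $\widetilde{\uw}$ a smooth point of dimension $0$. Hence your completeness step does not actually need Theorem \ref{thm:farkas}.
\item This localized form is exactly how the paper handles the $\widetilde V$ step in Proposition \ref{prop:serre_II_positive}(a).
\end{itemize}

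The paper's route is a quick numerical check, but it genuinely needs emptiness of other ILS spaces, and hence characteristic $0$. In both versions, Theorem \ref{thm:farkas} is still needed through Corollary \ref{cor:char_0_serre} to get base-point-freeness of the resulting point of $G^{r,r-s-1}_{d,k}(C)$.
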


\begin{proof}
By Corollary \ref{cor:char_0_serre}, it suffices to show that any point \((\cM,D,\widetilde{V}\subset \widetilde{W})\in G^{\rtilde,\rtilde-\ws-1}_{\dtilde,k}(C)=G^{\rtilde,1}_{\dtilde,k}\) is complete. If this were not the case, then it would be possible to keep \(\cM,D\) the same, while either enlarging \(\widetilde{W}\) and keeping \(\widetilde{V}\) the same, or enlarging \(\widetilde{V}\) and keeping \(\widetilde{W}\) the same. However, recall that
\[
\rho_\bullet=g-(\rtilde+1)(g-\dtilde+\rtilde)+(\rtilde-\ws)(\ws+1)-(\rtilde-\ws-1)k=0.
\]

Enlarging \(\widetilde{W}\) and keeping \(\widetilde{V}\) the same produces an ILS for which the values of \(\rtilde,\ws\) both increase by \(1\). In this case, the new value of \(\rho_\bullet\) is
\[
\begin{aligned}
\dtilde-g-2(\rtilde+1)+(\rtilde-\ws)
&=(\rho+r+k+2)-(\rho+2r+2)-2(k-s+1)+2 \\
&=(s-r)+(s-k) \\
&<0,
\end{aligned}
\]
where in the last inequality we have used 
\[
k \geq \frac{(r-s)(s+1)}{r-s-1}=(s+1)\left(1+ \frac{1}{r-s-1}\right)>s+1
\]
contradicting Theorem \ref{thm:farkas}.

On the other hand, enlarging \(\widetilde{V}\) and keeping \(\widetilde{W}\) the same produces an ILS for which \(\ws\) decreases by \(1\). In this case, the new value of \(\rho_\bullet\) is \(2\widetilde{s}-\widetilde{r}-k=-s-4<0\), again contradicting Theorem \ref{thm:farkas}.
\end{proof}

If \(s=r-2\) and \((g,d,k;r,s)\) is a terminal tuple of type (II), then we must have \(k=2s+2\). In this case, we have \((g,\dtilde,k;\rtilde,\ws)=(g,d,k;r,s)\), so Proposition \ref{prop:serre_II} is of no use. However we have a direct argument.

\begin{proposition}\label{prop:all_zeros}
Suppose that \(s=r-2\), and let \((g,d,k;r,s)\) be a terminal tuple of type (II), in which case
\[
(g,d,k;r,s)=(2r+2,3r,2r-2;r,r-2),\qquad (\rho,\rho_\bullet,\rho')=(0,0,0).
\]
Then, \(\Ex(g,d,k;r,s)\) holds.
\end{proposition}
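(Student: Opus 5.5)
The plan is a direct construction of a bpf ILS. The profile is $(0,0,0)$, so none of the steps of Proposition~\ref{prop:inductive_steps_main} apply, and Serre duality sends the tuple to itself.

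\emph{Reformulation.} Write $(g,d,k)=(2r+2,3r,2r-2)$, and let $\uw=(\cL,D,V\subset W)\in G^{r,1}_{3r,2r-2}(C)$ with $C$ general. Then $A:=\cL(-D)$ has degree $r+2$ and $h^0(A)\ge 2$. Riemann--Roch gives $\chi(\cL)=r-1$, so $h^0(\cL)\ge r+1$ forces $h^1(\cL)\ge 2$. Hence $A':=K_C\otimes\cL^\vee$ is also a $g^1_{r+2}$. Since $\rho(2r+2,1,r+2)=0=\rho(2r+2,2,r+2)+(\text{negative})$, on a general curve there are finitely many such pencils; each is complete, bpf, and a smooth point of $G^1_{r+2}(C)$ by Gieseker--Petri. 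So the data is:
\begin{itemize}
\item two pencils $A,A'$;
\item a divisor $D\in|K_C-A-A'|$, of degree $2r-2$;
\item $\cL=K_C-A'$, $W=H^0(\cL)$;
\item $V=H^0(A)\cdot 1_D$.
\end{itemize}
The pair $A=A'$ is excluded: by the base-point-free pencil trick, $h^0(K_C-2A)$ is the kernel of the Petri map, which is zero. So I take $A\neq A'$, which is possible because the number of $g^1_{r+2}$'s on a general genus-$(2r+2)$ curve is a Catalan number bigger than $1$ for $r\ge 2$.

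\emph{Existence of $D$.} I will use the map $\phi=(\phi_A,\phi_{A'})\colon C\to\bP^1\times\bP^1$, of bidegree $(r+2,r+2)$. For general $C$ and distinct pencils I expect $\phi$ to be birational onto a nodal curve; I would either cite the standard result or prove it by degeneration. The arithmetic genus of the image is $(r+1)^2$, so it has $\delta=(r+1)^2-(2r+2)=r^2-1$ nodes. By adjunction, $H^0(K_C-A-A')$ is identified with the curves of bidegree $(r-1,r-1)$ passing through the nodes. This gives
\[
h^0(K_C-A-A')\ \ge\ r^2-(r^2-1)=1,
\]
so an effective $D$ exists.

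\emph{Completeness and base-point-freeness.} By Riemann--Roch, $h^0(\cL)=h^0(A')+3r-2r-1=r+1$, so $W=H^0(\cL)$. Next, $V=H^0(A)$ is all of $H^0(\cL(-D))$ because $A$ is complete, so $\uw$ is complete. For (BPF1), the pencil $A$ is bpf. For (BPF2), $K_C-A'$ is bpf, since $h^0(A'+p)=2$ for every $p$ on a general curve. Hence $\uw$ is bpf and underlies a non-degenerate map $f\colon C\to X_{r,r-2}$ of degree $(3r,2r-2)$.

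\emph{Niceness: the main obstacle.} The expected dimension is $\rho_\bullet=0$, so niceness amounts to $\uw$ being a reduced isolated point of $G^{r,1}_{3r,2r-2}(C)$. The pencils $A$ and $A'$ are rigid with vanishing Petri obstruction, so a first-order deformation of $\uw$ keeps $A,A'$ fixed. It can then only move $D$ inside $|K_C-A-A'|$, together with the subspaces $V\subset W$, which are determined. So I need two things:
\begin{itemize}
\item $h^0(K_C-A-A')=1$ exactly, i.e. the $r^2-1$ nodes impose independent conditions on $(r-1,r-1)$-curves;
\item the remaining tangent directions in the degeneracy-locus description of \cite[\S 4]{chz} vanish. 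I would verify this by writing the Zariski tangent space as the kernel of a Petri-type map built from $H^0(A)\otimes H^0(K_C-A')$ and identifying it with $H^0(K_C-2A)$-like terms that vanish by Petri.
\end{itemize}
For the node independence, my first attempt is a specialization that keeps the nodes in general position, for instance a degeneration of the $(r+2,r+2)$ curve to a union of rulings with chosen nodes smoothed. If that fails, the fallback is to construct a single nice map out of a nodal curve and smooth it with Lemma~\ref{lem:attach_and_smooth}.
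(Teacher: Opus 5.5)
\paragraph{Gap: the key claim $h^0(K_C-A-A')=1$ is never proved.}
Most of your setup is sound and is the paper's construction: $\cL=K_C\otimes (A')^{\vee}$, $W=H^0(\cL)$, $D\in|K_C-A-A'|$ and $V=H^0(A)\cdot 1_D$, with $A\not\cong A'$. Your direct base-point-freeness check via $\rho(2r+2,2,r+3)<0$ is also fine, and is characteristic-free.

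Everything rests on $h^0(K_C-A-A')=1$. Without it you get neither uniqueness of $D$ nor vanishing of the tangent space: if $h^0\ge 2$, then $D$ moves in a positive-dimensional linear system, $\uw$ is not isolated, and $f$ is not nice. The claim is equivalent to $h^0(A\otimes A')=4$. It does not follow from Brill--Noether theory applied to the single bundle $A\otimes A'$, since for $r\ge 4$ a general curve of genus $2r+2$ does carry $g^4_{2r+4}$'s.

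Your proposed specialization "keeping the nodes in general position" is circular. Independence of the $r^2-1$ nodes on $(r-1,r-1)$-curves is exactly the statement to be proved, and no concrete degeneration is given.

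The fallback through Lemma~\ref{lem:attach_and_smooth} cannot work either. Glue a genus-$0$ curve $R$ of degree $(\delta,\kappa)$ at $m$ points. Then $\rho_\bullet$ changes by $\chi(R,h^*T_X(-\sum x_i))\ge 0$, $\rho$ by $(r+1)\delta-r(m-1)$, and $\rho'$ by $2(\delta-\kappa)-(m-1)$. Start from profile $(0,0,0)$ and require the nice map on $C'$ to come from a valid tuple. The three changes must then all be $\le 0$, and a short computation gives $\delta=\kappa=0$ and $m=1$. This is why the paper treats this tuple as a base case.

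\paragraph{The missing idea: dimensional transversality of the ILS space.}
Suppose $h^0(K_C-A-A')\ge 2$. Let $D$ vary in $|K_C-A-A'|$, keeping $W=H^0(K_C-A')$ and $V=H^0(A)\cdot 1_D$. This gives a positive-dimensional family in $G^{r,1}_{3r,2r-2}(C)$. That contradicts Theorem~\ref{thm:farkas}, which says this space has dimension $\rho_\bullet=0$. In arbitrary characteristic, Proposition~\ref{prop:hurwitz} with $k=2r-2$ plays the same role; it is Hurwitz--Brill--Noether for the pushforward of $A'$ along the cover defined by $A$.

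Once $h^0=1$ is known, your own first-order argument finishes the job. The pencils are rigid, $D$ is unique, and $V\subset W$ is determined, so the tangent space vanishes. The extra Petri-type computation in your second bullet is therefore unnecessary.

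Existence of $D$ also needs neither birationality nor nodality of $C\to\bP^1\times\bP^1$. The base-point-free pencil trick makes $H^0(A)\otimes H^0(A')\to H^0(A\otimes A')$ injective, since its kernel is $H^0(A'\otimes A^{\vee})=0$. Hence $h^0(A\otimes A')\ge 4$, and Riemann--Roch gives $h^0(K_C-A-A')=h^0(A\otimes A')-3\ge 1$.
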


\begin{proof}
We will construct an ILS \((\cL,D,V\subset W)\in G^{r,1}_{3r,2r-2}(C)\). Observe first that we must have \(V\in G^1_{r+2}(C)\) and \(W\in G^r_{3r}(C)\), both of which are smooth of dimension \(\rho'=\rho=0\). Furthermore, writing \(\cL\) for the line bundle underlying \(W\), we have
\[
h^0(K_C\otimes \cL^{-1})=h^0(\cL)-(3r-(2r+2)+1)=2,
\]
because $\rho=0$ implies $W=H^0(C,\cL)$. In particular, \(W\) must be residual to a complete \(g^1_{r+2}\).

Choose now distinct line bundles \(\cN_1,\cN_2\in W^1_{r+2}(C)\), that is, \(\deg(\cN_i)=r+2\) and \(h^0(\cN_i)=2\). This is possible because the cardinality of \(W^1_{r+2}(C)\) is equal to the Catalan number \(C_{r+1}=\frac{1}{r+2}\binom{2r+2}{r+1}\geq 2\) \cite{castelnuovo,gh}. Then, define \(\cL=K_C\otimes \cN_1^{-1}\), and \(W=H^0(\cL)\).

We claim next that
\[
h^0(\cL\otimes \cN_2^{-1})=h^0(K_C\otimes \cN_1^{-1}\otimes \cN_2^{-1})\geq 1.
\]
Equivalently, by Serre duality, \(h^0(\cN_1\otimes \cN_2)\geq 4\). Indeed, by the base-point-free pencil trick \cite[\S III.3]{acgh}, the multiplication map \(H^0(\cN_1)\otimes H^0(\cN_2)\to H^0(\cN_1\otimes \cN_2)\) is injective if \(\cN_1\not\simeq \cN_2\), hence \(h^0(\cN_1\otimes \cN_2)\geq 4\).

Choose now \(D\in |\cL\otimes \cN_2^{-1}|\), so that \(V=H^0(\cL(-D))\in G^1_{r+2}(C)\). Then, \((\cL,D,V\subset W)\in G^{r,1}_{3r,2r-2}(C)\). If it were the case that \(h^0(\cL\otimes \cN_2^{-1})\geq 2\), then we would obtain a positive-dimensional family inside \(G^{r,1}_{3r,2r-2}(C)\), which contradicts Theorem \ref{thm:farkas}. Therefore, we must in fact have a \emph{unique} choice of \(D\). We claim finally that \((\cL,D,V\subset W)\) is a smooth point of \(G^{r,1}_{3r,2r-2}(C)\). Indeed, \(\cN_1,\cN_2\) do not deform to first order inside \(W^1_{r+2}(C)\) by the Gieseker-Petri theorem, so \(V,W\) do not deform to first order. Because \(h^0(\cL\otimes \cN_2^{-1})=1\), the divisor \(D\) also does not deform to first order. 

Combining Lemma \ref{lem:check_bpf} and Theorem \ref{thm:farkas}, the ILS \((\cL,D,V\subset W)\) is also bpf, so gives rise to a nice map. This completes the proof.
\end{proof}

\begin{remark}\label{rem:hurwitz_stronger}
One can obtain a slightly more general existence statement than Proposition \ref{prop:all_zeros} using Hurwitz-Brill-Noether theory, see Proposition \ref{prop:hurwitz} and its proof.
\end{remark}

We are now ready to prove our main existence theorem.

\begin{proof}[Proof of Theorem \ref{thm:main}, assuming \(\charac(\bK)=0\)]
We begin with the case \(s=r-2\). By Propositions \ref{prop:inductive_steps_main} and \ref{prop:reductions}, we reduce immediately to the case of terminal valid tuples, Definition \ref{def:terminal}. In families (Ia), (II), (III), and (IV), existence is proven unconditionally in Propositions \ref{prop:serre_Ia}, \ref{prop:all_zeros}, \ref{prop:serre_III}, and \ref{prop:genus_0}, respectively.

Consider now a valid tuple \((g,d,k;r,s)\) in terminal family (Ib), with profile \((0,\rho_\bullet,\rho')\). By Lemma \ref{lem:rho_0_profile}(b) and Remark \ref{rmk: rho_b=0-iff-rho'=0}, we have \(\rho_\bullet=0\) if and only if \(\rho'=0\). If \(\rho_\bullet=\rho'=0\), then we are in terminal family (II), in which case we are done. Thus, we may assume that \(\rho_\bullet,\rho'>0\). In this case, Proposition \ref{prop:serre_Ib} reduces existence to that for the Serre dual tuple \((g,\dtilde,k;\rtilde,\ws=\rtilde-2)\), with profile \((\rho'>0,\rho_\bullet>0,0)\). By Proposition \ref{prop:rho_prime_0}, \(\Ex(g,\dtilde,k;\rtilde,\ws)\) can be reduced further to existence for a valid tuple in terminal family (III), in which case we are done. This completes the proof of existence in the case \(s=r-2\).

Now, assume that \(r,s\) are arbitrary. Once more, we reduce existence to the case of terminal valid tuples. Now, all base cases are handled either unconditionally, or reduced to the case \(\ws=\rtilde-2\) in Propositions \ref{prop:serre_Ib} and \ref{prop:serre_II}, which follow in turn from the case \(s=r-2\) above. This completes the proof.
\end{proof}

\section{Positive characteristic}
\label{sec:positive_char}

In this section, we explain how to modify the arguments of \S \ref{sec:terminal_base} to prove Theorem \ref{thm:main} in arbitrary characteristic. For the most part, we only used the characteristic \(0\) assumption in order to invoke Theorem \ref{thm:farkas} on dimensional transversality of the moduli space of ILS \(G^{r,r-s-1}_{d,k}(C)\). In all but one case, terminal family (II), we are able to establish the needed cases of Theorem \ref{thm:farkas} in arbitrary characteristic, so that the proofs in \S \ref{sec:terminal_base} go through essentially without change. In the remaining terminal family, we fall back on the fact that this case is covered by Farkas's existence theorem, \cite[Theorem 0.5]{farkas}.

This section is structured as follows: in \S \ref{sec:positive_char_overview}, we reprove the statements of \S \ref{sec:terminal_base}, assuming the validity of auxiliary transversality statements in arbitrary characteristic. We prove the needed statements in \S \ref{sec:expected_dimension}-\S \ref{sec:lls}. While the arguments of \S \ref{sec:expected_dimension}-\S \ref{sec:lls} may be known to experts, we include them for completeness.

\subsection{Overview}\label{sec:positive_char_overview}

\begin{proposition}\label{prop:serre_Ia_positive}
Proposition \ref{prop:serre_Ia} holds in arbitrary characteristic.
\end{proposition}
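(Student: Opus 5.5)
Rerunning the proof of Proposition \ref{prop:serre_Ia} shows that characteristic $0$ enters at only one point. The inputs specific to the Serre dual tuple $(r+1,k,k;k-s-1,k-s-2)$ hold in any characteristic. These are the Brill-Noether existence and Gieseker-Petri theorems for $G^{\rtilde}_{\dtilde}(C)$, the identification of $G^{\rtilde,0}_{\dtilde,\dtilde}(C)$ with the projectivized universal linear series, and the completeness argument via \cite[Lemma IV.3.5]{acgh}. Lemma \ref{lem:serre} is purely formal. The one place characteristic $0$ is used is Corollary \ref{cor:char_0_serre}: Theorem \ref{thm:farkas} supplies the hypothesis of Lemma \ref{lem:check_bpf}, namely that $G^{r,r-s-1}_{d',k'}(C)$ has expected dimension for all $d'\le d=2r$ and $d'-k'\le d-k$. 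So I would keep the first half of the proof unchanged and replace the bpf step by a direct argument.

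Rather than proving the full dimension statement, I would rerun the argument of Lemma \ref{lem:check_bpf} only on the relevant component. Let $G$ be the component of $G^{r,r-s-1}_{2r,k}(C)$ obtained from Lemma \ref{lem:serre}; it is smooth of dimension $\rho_\bullet$ at a general point $\uw=(\cL,D,V\subset W)$. Suppose $\uw$ is not bpf. Apply (T1) $\delta_1$ times and (T2) $\delta_2$ times, as in that proof. This gives a map from $G$ into $G^{r,r-s-1}_{2r-\delta_2,k+\delta_1-\delta_2}(C)$, and we only need to know that the image has dimension at most the expected value.

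For the image, I would use only classical Brill-Noether theory together with explicit completeness. Since $\uw$ is complete, $W=H^0(\cL)$ with $\deg\cL=2r$ and $h^0=r+1$, and $V=H^0(\cL(-D))$. By Serre duality, $\cM(-D)=K_C\otimes\cL^\vee$ is effective of degree $0$, so $\cL\cong K_C$ is forced (recall $g=r+1$, so $2g-2=2r$). Hence $W$ is the canonical series and $D$ is a divisor of degree $k$ whose span in canonical space has codimension $r-s$. Both base-point conditions can then be checked geometrically. (BPF2) holds because $|K_C|$ is base-point-free. (BPF1) asks that the projection from $\langle D\rangle$ have no base points. A base point $p$ would mean $h^0(K_C(-D-p))=h^0(K_C(-D))$, and by Riemann-Roch this is $h^0(\cO(D+p))=h^0(\cO(D))+1$.

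The remaining, and expected hardest, step is a dimension count on divisors $D$ with $h^0(\cO(D))=\rtilde+1$, carried out via the Serre dual side. A failure of (BPF1) at $p$ would produce a point of $G^{\rtilde+1}_{k+1}(C)$. General such points move in a family of dimension $\rho(g,\rtilde+1,k+1)+(\rtilde+1)$, corrected by the $1$-dimensional choice of $p$. Using the classical Brill-Noether theorem, which holds in all characteristics, I would check that this family has dimension $\rho(g,\rtilde,k)+\rtilde-(r-s-1)<\rho_\bullet$, mirroring the inequality in Lemma \ref{lem:check_bpf}. The dimension is then too small to contain a general point of $G$. Hence a general point of $G$ is bpf and gives a nice map. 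The hardest part is making this count rigorous: one must show that $G^{\rtilde+1}_{k+1}(C)$ has the expected dimension $\rho(g,\rtilde+1,k+1)$, and when that number is negative, that it is empty.
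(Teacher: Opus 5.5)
Your proposal is correct and follows the paper's route. The paper also observes that the underlying series must be canonical, so (BPF2) is automatic and only the (T1) twist of Lemma \ref{lem:check_bpf} is needed, and then bounds the (T1) target by classical Brill--Noether counts, packaged as Proposition \ref{prop:canonical} (purity of $G^{r,r-s-1}_{2r,k'}(C)$ for all $k'\geq k$, stratified by $h^0(K_C(-D))$). Your Serre-dual count on divisors $D+p$ with $h^0(\cO(D+p))=\rtilde+2$ is the same computation, restricted to the component through your point. Since this $h^0$ is forced to be exactly $\rtilde+2$, you do not even need the stratification, and no extra term for $p$ is needed because it is determined up to finite choice by $D+p$ — your final figure $\rho_\bullet-(r-s-1)$ already reflects this.
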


\begin{proof}
Recall in this case that \((g,r,d)=(r+1,r,2r)\). In the proof of Proposition \ref{prop:serre_Ia}, we constructed a smooth point \(\uw=(\cL,D,V\subset W)\in G^{r,r-s-1}_{d,k}(C)\) of expected dimension. The hypothesis \(\charac(\bK)=0\) was used to ensure, via Lemma \ref{lem:check_bpf} and Corollary \ref{cor:char_0_serre}, that a neighborhood of \(\uw\) contains bpf points. In Proposition \ref{prop:canonical} below, we show that, for all \(k'\geq k\), the moduli space of ILS \(G^{r,r-s-1}_{d,k'}(C)\) is pure of expected dimension, in arbitrary characteristic. This suffices to apply the argument of Lemma \ref{lem:check_bpf}. 

Indeed, the linear series \((\cL,W)\in G^r_d(C)\) underlying \(\uw\in G^{r,r-s-1}_{d,k}(C)\) must be canonical, and is in particular bpf. Therefore, the proof of Lemma \ref{lem:check_bpf} goes through, with only the twisting operations \((T1)\) needed; the dimensional transversality of the spaces \(G^{r,r-s-1}_{d,k'}(C)\) suffices to deduce that \(G^{r,r-s-1}_{d,k}(C)\) is pure of expected dimension, with dense bpf locus.
\end{proof}

\begin{proposition}\label{prop:serre_III_positive}
Proposition \ref{prop:serre_III} holds in arbitrary characteristic.
\end{proposition}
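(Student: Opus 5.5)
The plan is to re-run the proof of Proposition \ref{prop:serre_III} and replace each use of $\charac(\bK)=0$ by an argument valid in any characteristic. There are two such uses: Bertini smoothness, which gives smooth points of $G_{\widetilde{W}}$, and Corollary \ref{cor:char_0_serre}, whose proof uses Theorem \ref{thm:farkas} to get base-point-freeness.

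The other inputs are characteristic-free. These are that $G^{\rtilde}_{\dtilde}(C)$ is a finite reduced set of points (Brill--Noether and Gieseker--Petri, which hold in arbitrary characteristic), that $\widetilde{W}$ is bpf and complete, the finite cover $\gamma\colon G_{\widetilde{W}}\to(\bP^{\rtilde})^\vee$, and Lemma \ref{lem:serre}. So it remains to supply the two replacements below.

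\emph{Smoothness.} I will show that $\gamma$ is étale over a general $H$. This reduces to finding one hyperplane $H$ with $f^{-1}(H)$ reduced, which holds exactly when $f\colon C\to\bP^{\rtilde}$ is generically unramified, i.e.\ separable onto its image. I would argue this by contradiction. If every point of $C$ were a ramification point of $\widetilde{W}$, then $\widetilde{W}$ would have a positive-dimensional family of ramification points. I would then compare with the characteristic-free dimension count for linear series with prescribed ramification at a moving point on a general curve: the locus of $g^{\rtilde}_{\dtilde}$ with a cusp at some point has dimension $\rho-1<0$ when $\rho=0$. This step is the main obstacle, and I would settle it first. Failing that argument, the fallback is a direct tangent-space computation at a chosen point, taking $H$ spanned by $\rtilde$ general points of $C$ and checking that $H$ is not tangent there.

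\emph{Base-point-freeness.} I will check bpf directly on the Serre dual, so that Theorem \ref{thm:farkas} is not needed. Write $f^{-1}(H)=D+E$ with $D\ni$ $\rtilde$ general points. Then $\cM(-D)\cong\cO(E)$ with $h^0=1$, $\cL(-D)=K_C\otimes\cM^\vee$, and $V=H^0(\cL(-D))$ is the complete $g^{r-s-1}_{d-k}$.
\begin{enumerate}
\item[(BPF1)] $V$ has a base point $p$ iff $V$ lies in a $g^{r-s-1}_{d-k-1}$. Since $\rho'=0$, that space has negative $\rho$, so it is empty by Brill--Noether, valid in any characteristic.
\item[(BPF2)] By Riemann--Roch, $W=H^0(K_C\otimes\cO(-E))$ has a base point at $p\in D$ iff $h^0(\cO(E+p))\geq 2$.
\end{enumerate}
By Lemma \ref{lem:rho_prime_0_profile}, $\deg E=\dtilde-k=(r-s)q-s-1=\rho\le r-s-1$. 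For general $H$, the divisor $E$ is a general subdivisor of a general member of the bpf series $\widetilde{W}$. A general curve carries a $g^1_{\rho+1}$ only if $\rho(g,1,\rho+1)\ge0$. I would then count dimensions: pencils of degree $\rho+1$ containing the moving point $p$ and the residual $E$ form a family too small to meet the $\rtilde$-dimensional family of admissible $(H,p)$. This uses only Brill--Noether dimension statements valid in arbitrary characteristic.

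Combining the two replacements, a general point of $G_{\widetilde{W}}$ gives a smooth, complete, bpf point of $G^{r,r-s-1}_{d,k}(C)$ via Lemma \ref{lem:serre}. It therefore underlies a nice map, which proves $\Ex(g,d,k;r,s)$ in arbitrary characteristic.
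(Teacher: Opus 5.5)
Your proposal is correct in substance, but it replaces the paper's first characteristic-free input by a different one, and your count for that step needs fixing.

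For Bertini, the paper cites Proposition \ref{prop:rho_0_unramified}. That result is proved by degenerating to an elliptic chain and observing that the aspects $\sigma^i_j$ have simple zeros away from the nodes. You instead appeal to pointed Brill--Noether theory, which works, but the count you wrote is off. When $\rtilde=q=1$ (which happens whenever $r\ge 2s+1$), every pencil of degree $\dtilde\ge 2$ ramifies somewhere. So the locus of $g^{\rtilde}_{\dtilde}$ with a ramification point somewhere is all of $G^1_{\dtilde}(C)$, not of dimension $\rho-1<0$.

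The clean statement is the one-point version. For general $p\in C$, the pair $(C,p)$ is general in $\cM_{g,1}$. Any ramification of $\widetilde W$ at $p$ would then make the adjusted Brill--Noether number at most $\rho(g,\rtilde,\dtilde)-1=\rho'-1<0$. This theorem is characteristic-free, by the same elliptic-chain analysis as in \S\ref{sec:lls}, so $f$ is unramified at a general point.

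Your fallback via hyperplanes through $\rtilde$ general points would be circular: if $f$ were inseparable, every hyperplane section would be non-reduced. The trade-off between the two routes is this. The paper checks unramifiedness directly on the limit linear series it already analyzes. Your route is shorter but uses pointed Brill--Noether in arbitrary characteristic as a black box.

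For base-point-freeness, your (BPF1) is exactly the paper's argument. For (BPF2), your Serre-dual reformulation is correct: $W=H^0(K_C(-E))$ has a base point at $p$ iff $h^0(\cO(E+p))\ge 2$. However, the dimension count over pairs $(H,p)$ that you sketch is unnecessary. Since $\deg E=\rho\le r-s-1$ and $g=(r-s)(q+1)\ge 2(r-s)$, one has $\rho(g,1,\rho+1)=2\rho-g\le -2$. So a general $C$ carries no $g^1_{\rho+1}$ at all, and (BPF2) holds for every $H$ and $p$.

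The paper argues even more directly, on the original side: $W\in G^r_d(C)$ cannot have a base point because $\rho(g,r,d-1)=\rho-(r+1)<0$. This shows that every point of $G^{r,r-s-1}_{d,k}(C)$ is bpf, not only the constructed one, with no analysis of $E$ needed.
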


\begin{proof}
Recall in this case that \(\rho'=0\) and \(\rho\leq r-s-1\). In the proof of Proposition \ref{prop:serre_III}, we first used the hypothesis \(\charac(\bK)=0\) to invoke Bertini smoothness for the map \(f\colon C\to \bP^{\rtilde}\). In arbitrary characteristic, Bertini smoothness still applies as long as \(f\) is not everywhere ramified, which holds in our case by Proposition \ref{prop:rho_0_unramified} below.

We again used the hypothesis \(\charac(\bK)=0\) to ensure that a neighborhood of \(\uw\) contains bpf points. In fact, in arbitrary characteristic, it is true that \emph{any} point \(\uw\in G^{r,r-s-1}_{d,k}(C)\) is bpf. Indeed, we have
\[
\rho(g,r-s-1,d-k-1)<\rho'=0,
\]
so \(V\in G^{r-s-1}_{d-k}(C)\) must be bpf. Similarly, we have
\[
\rho(g,r,d-1)=\rho-(r+1)<0,
\]
by Lemma \ref{lem:rho_prime_0_profile}, so \(W\in G^r_d(C)\) is also bpf.
\end{proof}

\begin{proposition}\label{prop:serre_Ib_positive}
Proposition \ref{prop:serre_Ib} holds in arbitrary characteristic.
\end{proposition}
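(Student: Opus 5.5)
Proposition \ref{prop:serre_Ib} uses characteristic $0$ in two places. First, Theorem \ref{thm:farkas} supplies the dimension bound for $G^{\rtilde,1}_{\dtilde,k}(C)$ in the completeness argument. Second, Corollary \ref{cor:char_0_serre} supplies bpf-ness of the resulting ILS on the original side. The plan is to replace each use with an argument valid in arbitrary characteristic.

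\textbf{Completeness on the dual side.} Here the hypothesis is $\Ex(g,\dtilde,k;\rtilde,\ws)$, so a nice map to $X_{\rtilde,\ws}$ exists. Its ILS $\widetilde{\uw}=(\cM,D,\widetilde V\subset\widetilde W)$ therefore lies on a component of $G^{\rtilde,1}_{\dtilde,k}(C)$ that is generically smooth of expected dimension $\rho_\bullet$. I will work only on this component rather than on the whole space.
\begin{itemize}
\item The equality $\widetilde V=H^0(\cM(-D))$ holds by the Brill--Noether theorem, which is valid in all characteristics: $\widetilde V\in G^1_{\dtilde-k}(C)$ with $\rho'=0$, and $\rho(g,2,\dtilde-k)<0$.
\item For $\widetilde W$, suppose $h^0(\cM)>\rtilde+1$ at a general point of the component. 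Then the subspaces $\widetilde W'$ with $\widetilde V\subset \widetilde W'\subset H^0(\cM)$ form a family of dimension at least $\rtilde-1$. This family lies inside the same component, since the fiber over $(\cM,D,\widetilde V)$ is connected and contains $\widetilde{\uw}$. Hence $\rtilde-1\le\rho_\bullet$, and the numerical contradiction from the original proof applies unchanged.
\end{itemize}
So the general point of this component is complete. Lemma \ref{lem:serre} then transfers it to a smooth point $\uw\in G^{r,r-s-1}_{d,k}(C)$ of expected dimension, lying on a component $G$ that is generically smooth of dimension $\rho_\bullet$.

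\textbf{Bpf-ness on the original side.} Here $(g,d,k)=(2r+2,3r,k)$ with $\rho=0$. I will follow the approach of Proposition \ref{prop:serre_III_positive}.
\begin{itemize}
\item Every $W\in G^r_{3r}(C)$ is bpf, since $\rho(g,r,d-1)<0$ in any characteristic. So operation (T2) of Lemma \ref{lem:check_bpf} is never needed, and only (T1) remains.
\item Applying (T1) $\delta_1$ times maps $G$ to $G^{r,r-s-1}_{d,k+\delta_1}(C)$. This lowers $\rho_\bullet$ by $\delta_1(r-s-1)$ and lowers $\rho'$ by $(r-s)\delta_1$.
\item To bound the target, fibre it over $G^r_d(C)$, which is finite and reduced by Brill--Noether and Gieseker--Petri in arbitrary characteristic. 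Over a fixed $W$ we need $V\subset W$ together with $D$ such that $V\subset H^0(\cL(-D))$.
\item It suffices to show that, for each fixed $W$, the space of pairs $(D,V)$ with $\deg D=k'\ge k$ has dimension at most $(r-s)(s+1)-(r-s-1)k'$. This is a secant-plane dimension count for the fixed curve $C\to\bP^r$: the $k'$-secant $s$-planes should form a family of the expected dimension.
\end{itemize}

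\textbf{Expected main obstacle.} The secant-plane count in the last step is the crux, and it should be the content of the auxiliary results in \S\ref{sec:expected_dimension}--\S\ref{sec:lls}, analogous to Proposition \ref{prop:canonical}.
\begin{itemize}
\item I would first try Serre duality. Here $\cL=K_C\otimes\cN^{-1}$ for a complete $g^1_{r+2}$ $\cN$ (as in Proposition \ref{prop:all_zeros}), which converts the count into a statement about pencils $\cN(D)$.
\item Failing that, I would degenerate to a chain of elliptic curves and use limit linear series, which work in arbitrary characteristic. The point is that the fixed linear series on $C$ is rigid, so only the $(D,V)$ data needs to be controlled in the limit.
\end{itemize}
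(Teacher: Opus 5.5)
There is a genuine gap, in the bpf step.

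\textbf{Completeness step.} This part is fine, and it differs slightly from the paper. The paper invokes Proposition \ref{prop:hurwitz} to show that every point of $G^{\rtilde,1}_{\dtilde,k}(C)$ is complete. You use only that the ILS coming from the nice map is a smooth point of dimension $\rho_\bullet$. The irreducible Grassmannian of enlargements $\widetilde V\subset\widetilde W'\subset H^0(\cM)$ through that point then forces $\rtilde-1\le\rho_\bullet$, which is valid in any characteristic.

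\textbf{Bpf step.} You correctly note that $W$ is bpf, so only (T1) is needed. You then reduce to the bound $\dim G^{r,r-s-1}_{d,k'}(C)\le\rho_\bullet-(k'-k)(r-s-1)$ for $k'>k$, and leave it unproved. As stated, this bound for the whole space is exactly the instance of Theorem \ref{thm:farkas} that is unavailable in positive characteristic. Your fallback, degenerating to elliptic chains, is precisely what the paper says it does not know how to carry out.

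\textbf{The missing reduction.} You do not need the bound on all of $G^{r,r-s-1}_{d,k+1}(C)$. You only need it on the $\rho_\bullet$-dimensional image of (T1), and that image consists of complete ILS.
\begin{enumerate}
\item A general point of the component through $\uw$ is complete, since completeness is open.
\item If $p$ is a base point of $V=H^0(\cL(-D))$, then $V\subset H^0(\cL(-D-p))\subset H^0(\cL(-D))=V$, so the twisted ILS is again complete.
\item Lemma \ref{lem:serre} therefore transports the image to a $\rho_\bullet$-dimensional family of complete ILS in $G^{\rtilde+1,1}_{\dtilde+1,k+1}(C)$.
\item That space is the one in Proposition \ref{prop:hurwitz} with $k$ replaced by $k+1$. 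It has dimension $\rho_\bullet-(r-s-1)<\rho_\bullet$, or is empty, which gives the contradiction.
\end{enumerate}

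\textbf{The missing input.} Proposition \ref{prop:hurwitz} itself needs an ingredient that your remark about ``pencils $\cN(D)$'' points toward but does not supply. One fixes the rigid degree-$(r+2)$ cover $f\colon C\to\bP^1$. The data $h^0(\cL)$, $h^0(\cL\otimes\cN^\vee)$ and the choices of $W\supset V$ are then all read off from the splitting type of $f_*\cL$. The characteristic-free Hurwitz--Brill--Noether theorem \cite{hlar} on the dimensions of splitting loci then yields the count.
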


\begin{proof}
In the proof of Proposition \ref{prop:serre_Ib}, we applied Theorem \ref{thm:farkas} to the Serre dual space \(G^{\rtilde,\rtilde-\ws-1}_{\dtilde,k}(C)=G^{\rtilde,1}_{\dtilde,k}(C)\) to argue that any point is complete. In arbitrary characteristic, we may instead apply Proposition \ref{prop:hurwitz} below.

Let \(\uw\in G^{r,r-s-1}_{d,k}(C)\) be the Serre dual ILS, which is also complete. We must now show that some open neighborhood of \(\uw\) contains bpf points. Let \(\uw'=(\cL,D',V'\subset W)\in G^{r,r-s-1}_{d,k}(C)\) be a general point of any irreducible open neighborhood of \(\uw\). Note in particular that, because \(\rho=0\), the underlying linear series \((\cL,W)\in G^r_d(C)\) must be the same as that of \(\uw\). In particular, \((\cL,W)\) is bpf. Furthermore, \(\uw'\) is complete, because completeness is open. 

Therefore, if \(\uw'\) is not bpf, then we may apply operation (T1) (Lemma \ref{lem:check_bpf}) at least once, to produce a \(\rho_{\bullet}\)-dimensional family of complete ILS in \(G^{r,r-s-1}_{d,k+1}(C)\). By Serre duality (Lemma \ref{lem:serre}), we also obtain a \(\rho_{\bullet}\)-dimensional family of complete ILS in \(G^{\rtilde+1,\rtilde-\ws}_{\dtilde,k+1}(C)=G^{\rtilde+1,1}_{\dtilde,k+1}(C)\), which contradicts Proposition \ref{prop:hurwitz}. We conclude that \(\uw'\) is bpf, as needed.
\end{proof}

\begin{proposition}\label{prop:serre_II_positive}
Let \((g,d,k;r,s)\) be a terminal tuple of type (II).
\begin{enumerate}
\item[(a)] If \(k=2s+2\) (the largest possible), then Proposition \ref{prop:serre_II} holds in arbitrary characteristic.
\item[(b)] If \(k\leq 2s+1\), then \(\Ex(g,d,k;r,s)\) holds \emph{unconditionally}, in arbitrary characteristic.
\end{enumerate}
\end{proposition}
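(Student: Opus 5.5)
For part (a), I would follow the proof of Proposition \ref{prop:serre_II} and replace each use of Theorem \ref{thm:farkas} by the classical Brill--Noether theorem. That theorem holds in arbitrary characteristic (for instance by degenerating to chains of elliptic curves). When \(k=2s+2\), Lemma \ref{lem:rho_bullet_0_profile} gives \(\rho'=0\). The Serre dual tuple has \(\rtilde=s+2\) and \(\ws=s\), with profile \((\rho',\rho_\bullet,\rho)=(0,0,\rho)\). The point is that \(\rho'=0\) makes completeness automatic, with no dimension count for ILS spaces.

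\textbf{Step 1 (completeness).} Let \((\cL,D,V\subset W)\in G^{r,r-s-1}_{d,k}(C)\) be any ILS. We have \(\rho(g,r-s,d-k)<\rho'=0\), so \(h^0(\cL(-D))=r-s\) and \(V=H^0(\cL(-D))\). Likewise \(\rho(g,r+1,d)=\rho-(g-d+r)-(r+2)<0\), using \(g-d+r=2\) and \(\rho\le s\cdot(\text{something small})\) from the family (II) bounds. Hence \(h^0(\cL)=r+1\) and \(W=H^0(\cL)\). So every ILS in \(G^{r,r-s-1}_{d,k}(C)\) is complete. The same argument on the dual side (or Serre duality directly) shows every point of \(G^{\rtilde,1}_{\dtilde,k}(C)\) is complete. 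The needed negativity there reads \(\rho(g,\rtilde+1,\dtilde)<\rho(g,\rtilde,\dtilde)=\rho'=0\).

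\textbf{Step 2 (transfer).} Assuming \(\Ex(g,\dtilde,k;\rtilde,\ws)\), a nice map gives a smooth point of the dual ILS space of expected dimension \(\rho_\bullet=0\). By Step 1 this point is complete, so Lemma \ref{lem:serre} yields a smooth, complete, isolated point \(\uw\in G^{r,r-s-1}_{d,k}(C)\).

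\textbf{Step 3 (base-point-freeness).} Condition (BPF1) follows from \(\rho(g,r-s-1,d-k-1)<\rho'=0\). The main obstacle is (BPF2), namely that \(W\) has no base point \(p\in D\). A base point would produce, via (T2), an ILS in \(G^{r,r-s-1}_{d-1,k-1}(C)\) with \(V\) unchanged. Because \(\rho\) need not be less than \(r+1\), plain Brill--Noether for \(W\) does not immediately suffice. I would first try the Serre dual reformulation. There, \(p\) being a base point of \(W=H^0(\cL)\) with \(p\in D\) means \(h^0(\cM(-D+p))>h^0(\cM(-D))\), i.e. \(\widetilde V\) extends to a \(g^2_{\dtilde-k+1}\) containing it. This gives \(\rho(g,2,\dtilde-k+1)=\rho'-(g-(\dtilde-k)+1)-\ldots\), which should be negative precisely because \(\rho'=0\), forcing a contradiction. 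Failing that, I would apply the generalized \((T1)/(T2)\) count of Lemma \ref{lem:check_bpf}, using only Brill--Noether statements for the underlying \(g^{r-s-1}\) and \(g^r\), as in Proposition \ref{prop:serre_Ib_positive}.

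For part (b), I would not use the Serre dual at all. When \(k\le 2s+1\), the profile is \(\bigl((r-s-1)k-(r-s)(s+1),\,0,\,2s+2-k\bigr)\) with \(\rho'\ge1\). I would verify that these tuples satisfy the numerical hypotheses of \cite[Theorem 0.5]{farkas}, after translating notation via Remark \ref{rem:notation} (\(e=k\), \(f=k-s-1\)). That argument uses limit linear series on chains of elliptic curves, so it is valid in all characteristics. Its output is an ILS that I would then check, via the same Brill--Noether arguments, to be smooth of expected dimension and base-point-free, hence a nice map. The delicate part is confirming that the inequality in \cite{farkas} holds exactly when \(k\le 2s+1\) in family (II), which reduces to the condition \(\rho'>0\).
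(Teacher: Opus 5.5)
\textbf{Part (a) has a genuine gap in Steps 1 and 3.} Both steps try to get everything from global Brill--Noether counts, and those counts are not strong enough here.

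When $k=2s+2$ you have $\rho=(s+1)(r-s-2)$, so $\rho(g,r+1,d)=\rho-r-4$ is not negative in general. For $(r,s)=(8,2)$, i.e.\ $(g,d,k)=(30,36,6)$, it equals $0$; for $(r,s)=(10,3)$ it equals $6$. So nothing forces $h^0(\cL)=r+1$, or equivalently $h^0(\cM(-D))=2$ on the dual side. Your dual inequality $\rho(g,\rtilde+1,\dtilde)<0$ gives only $\widetilde W=H^0(\cM)$, not $\widetilde V=H^0(\cM(-D))$.

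The same problem defeats Step 3. The $g^1$ part $\widetilde V$ of the dual ILS has Brill--Noether number $\rho$, not $\rho'$, since the dual profile is $(\rho',\rho_\bullet,\rho)$. The $g^2_{\dtilde-k+1}$ you produce has $\rho(g,2,\dtilde-k+1)=\rho-r-1$, which is $3$ in the example above. This is the same count as $\rho(g,r,d-1)$, so passing to the dual gains nothing. Your fallback to Lemma \ref{lem:check_bpf} needs purity of ILS spaces, which is exactly the characteristic-$0$ input of Theorem \ref{thm:farkas}. Unlike in Proposition \ref{prop:serre_Ib_positive}, here $\rho>0$, so $W$ is not base-point-free for free.

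\textbf{The missing idea is to use only that the point at hand is isolated,} i.e.\ smooth of dimension $\rho_\bullet=0$.
\begin{itemize}
\item Completeness: $\widetilde W=H^0(\cM)$ because $\rho'=0$. If $h^0(\cM(-D))\ge 3$, replacing $\widetilde V$ by an arbitrary $2$-plane in $H^0(\cM(-D))\subset\widetilde W$ gives a positive-dimensional family through the point. Hence the point is complete, and Lemma \ref{lem:serre} yields a smooth, complete, isolated $\uw$.
\item (BPF2): suppose $p\in D$ is a base point of $W=H^0(\cL)$. Then $h^0(\cL(-p))=r+1$, so $h^0(\cL(q-p))\ge r+1$ for all $q\in C$, with equality at $q=p$ and hence, by semicontinuity, for general $q$. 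Since $V$ vanishes along $D\ni p$, the data $\bigl(\cL(q-p),\,D-p+q,\,V\subset H^0(\cL(q-p))\bigr)$ form a one-parameter family of ILS through $\uw$. This contradicts isolatedness.
\end{itemize}

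\textbf{In part (b) you have misplaced the difficulty.} The translation $2f\le e-1\iff k\le 2s+1$ is immediate. What needs proof is that Farkas's limit ILS smooths to an ILS that is smooth, isolated, and base-point-free. Brill--Noether counts cannot give this: $\rho'\ge 1$, there is no Serre-dual shortcut, and $\rho$ can exceed $r$. The paper handles this in Corollary \ref{cor:farkas_nice} in two parts.
\begin{itemize}
\item Smoothness: a first-order deformation analysis of the limit ILS on $Y\cup_p Z$. It uses that the pointed series $W_{\mathcal{A}}$, $W_{\cL}$, $\mathfrak{l}_Z$ are reduced isolated points, together with $H^0\bigl(Y,\cL\otimes\mathcal{A}^{\vee}\otimes\cO((d-k-s-1)p)\bigr)=0$.
\item Base-point-freeness of the aspects away from $p$: Proposition \ref{prop:pointed_rho_zero}, proved via elliptic chains and valid in all characteristics.
\end{itemize}
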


\begin{proof}
\,
\begin{enumerate}
\item[(a)] 
Following the proof of Proposition \ref{prop:serre_II}, we must first show that any smooth point \(\underline{\widetilde{W}}=(\cM,D,\widetilde{V}\subset \widetilde{W})\in G^{\rtilde,\rtilde-\ws-1}_{\dtilde,k}(C)=G^{\rtilde,1}_{\dtilde,k}\) of dimension 0, the expected, is complete. Observe first that, by Lemma \ref{lem:rho_bullet_0_profile}, because \(k=2s+2\), we have \(\rho'=0\). Therefore, it is impossible to enlarge the linear series \(\widetilde{W}\). On the other hand, if it is possible to enlarge the linear series \(\widetilde{V}\subset H^0(C,\cM(-D))\), then \(\underline{\widetilde{W}}\in G^{\rtilde,\rtilde-\ws-1}_{\dtilde,k}(C)\) moves in a positive dimensional family, a contradiction.

Therefore, Lemma \ref{lem:serre} applies, yielding a smooth point \(\uw=(\cL,D,V\subset W)\in G^{r,r-s-1}_{d,k}(C)\) of dimension 0. It suffices to show that \(\uw\) is bpf. \(V\) must be bpf because \(\rho'=0\). Therefore, if \(\uw\) is not bpf, then there must exist a point \(p\in D\) for which
\[
h^0(C,\cL(-p))=h^0(C,\cL)=r+1.
\]

In particular, for any \(q\in C\), we also have \(h^0(C,\cL(q-p))\geq r+1\). Because equality hold at $q=p$, by semicontinuity, equality holds for a general \(q\in C\). Now, we obtain a one-parameter family of ILS 
\[
\uw(q):=(\cL(q-p),D+q-p,V\subset H^0(C,\cL(q-p)))\in G^{r,r-s-1}_{d,k}(C),
\] 
containing \(\uw(p)=\uw\). However, \(\uw\in G^{r,r-s-1}_{d,k}(C)\) is a smooth point of dimension \(0\) which is a contradiction.

\item[(b)] This case is covered by Farkas's existence theorem, \cite[Theorem 0.5(i)]{farkas}. In particular, the inequality \(2f\leq e-1\) translates, after substituting \(e=k\) and \(f=k-s-1\), precisely into \(k\leq 2s+1\). 

Now, \cite[Theorem 0.5(i)]{farkas} asserts the existence of an ILS with invariants \((g,d,k;r,s)\). In fact, the (limit) ILS constructed in its proof is both smooth of expected dimension and bpf. We verify these statements in Corollary \ref{cor:farkas_nice} below. In contrast to the proof of Theorem \ref{thm:farkas}, the proof of \cite[Theorem 0.5]{farkas} is independent of characteristic.
\end{enumerate}
\end{proof}

\begin{proposition}\label{prop:all_zeros_positive}
Proposition \ref{prop:all_zeros} holds in arbitrary characteristic.
\end{proposition}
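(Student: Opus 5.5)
The plan is to rerun the proof of Proposition \ref{prop:all_zeros} step by step. Every appeal to characteristic zero, whether through Theorem \ref{thm:farkas} or Lemma \ref{lem:check_bpf}, gets replaced by a statement that holds in arbitrary characteristic. The classical Brill--Noether and Gieseker--Petri theorems for a general curve hold in every characteristic, and I will use them freely. In particular, $W^1_{r+2}(C)$ is reduced of dimension $\rho(2r+2,1,r+2)=0$. Its degree is the Castelnuovo number computed by the class of the degeneracy locus, namely the Catalan number $C_{r+1}\geq 2$. So two distinct $\cN_1,\cN_2\in W^1_{r+2}(C)$ exist, each complete and base-point-free, the latter because $\rho(g,1,r+1)<0$. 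Setting $\cL=K_C\otimes\cN_1^{-1}$ and $W=H^0(\cL)$, the base-point-free pencil trick gives $h^0(\cN_1\otimes\cN_2)\ge 4$ in any characteristic, hence $h^0(\cL\otimes\cN_2^{-1})\ge1$. We choose $D$ in this linear system and set $V=H^0(\cL(-D))$, exactly as before.

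The characteristic-free part of the remaining argument is base-point-freeness, which I will prove directly for every such ILS. (BPF1) holds because $V=H^0(\cN_2)$ is a base-point-free pencil. For (BPF2), a base point $p$ of $W=H^0(K_C\otimes\cN_1^{-1})$ would give, by Riemann--Roch, $h^0(\cN_1(p))=3$. That is a $g^2_{r+3}$ on $C$, with $\rho(2r+2,2,r+3)=2r+2-3(r+1)<0$, which Brill--Noether forbids. So the ILS is bpf without invoking Lemma \ref{lem:check_bpf}.

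The main obstacle is uniqueness of $D$, equivalently $h^0(\cN_1\otimes\cN_2)=4$ exactly. Earlier this came from Theorem \ref{thm:farkas}, and it is also what makes $\uw$ a smooth, reduced point of $G^{r,1}_{3r,2r-2}(C)$. To handle it, I will describe the ILS space directly. Because $\rho=\rho'=0$ and completeness is forced, every point of $G^{r,1}_{3r,2r-2}(C)$ is determined by a pair $(\cN_1,\cN_2)$ together with a divisor $D\in|K_C\otimes\cN_1^{-1}\otimes\cN_2^{-1}|$. I then plan to use Proposition \ref{prop:hurwitz} (Hurwitz--Brill--Noether), which holds in arbitrary characteristic, to control $h^0(\cN_1\otimes\cN_2)$. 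Since $\rho=0$, the forgetful map from the Hurwitz space of degree $r+2$ covers to $\cM_{2r+2}$ is generically finite and dominant. Hence $(C,\cN_1)$ is a general $(r+2)$-gonal curve, and Hurwitz--Brill--Noether bounds the Brill--Noether loci of $C$ in terms of the splitting type of the pushforward of $\cO_C$ along $|\cN_1|$. From this I want to rule out $\cN_2$ with $h^0(\cN_1\otimes\cN_2)\ge5$, i.e.\ a positive-dimensional family of $D$, which would produce a $\rho_\bullet<0$ locus. If this direct appeal does not suffice, the fallback is the semicontinuity trick from Proposition \ref{prop:serre_II_positive}(a): show the tangent space to $G^{r,1}_{3r,2r-2}(C)$ vanishes at one specific point, say after degenerating $C$ to a chain of elliptic curves, and propagate this by openness.

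Granting $h^0(K_C\otimes\cN_1^{-1}\otimes\cN_2^{-1})=1$, the rest is as before. By Gieseker--Petri, $\cN_1$ and $\cN_2$ do not deform to first order, so $V$ and $W$ are rigid. The divisor $D$ is the unique member of its linear system, so it is rigid as well. Hence $\uw$ is a smooth point of expected dimension $0$. Combined with the bpf argument above, it underlies a nice map.
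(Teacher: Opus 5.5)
Your proposal follows the paper's route. The only characteristic-dependent input in Proposition \ref{prop:all_zeros} is the $0$-dimensionality of $G^{r,1}_{3r,2r-2}(C)$, hence uniqueness of $D$, and the paper replaces Theorem \ref{thm:farkas} there by Proposition \ref{prop:hurwitz}. Your explicit base-point-freeness argument is just an unpacking of the paper's remark that bpf follows from $\rho=\rho'=0$.

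You can close your key step by specializing Proposition \ref{prop:hurwitz} directly, without the splitting-type discussion or the degeneration fallback. For $s=r-2$ and $k=2r-2$, the tuple there is exactly $(2r+2,3r,2r-2;r,r-2)$. So it asserts that $G^{r,1}_{3r,2r-2}(C)$ is pure of dimension $\rho_\bullet=0$. A pencil of divisors $D$ would give a positive-dimensional family in this space, not a $\rho_\bullet<0$ locus, so $h^0(K_C\otimes\cN_1^{-1}\otimes\cN_2^{-1})=1$ follows at once.
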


\begin{proof}
In the proof of Proposition \ref{prop:all_zeros}, we applied Theorem \ref{thm:farkas} to \(G^{r,1}_{3r,2r-2}(C)\) in order to ensure that the constructed ILS \(\uw\) is smooth of dimension \(0\), the expected. In arbitrary characteristic, this can be replaced by Proposition \ref{prop:hurwitz}, taking \(k=2r-2\). Then, the fact that the constructed point \(\uw\in G^{r,1}_{3r,2r-2}(C)\) is bpf follows in arbitrary characteristic immediately from the fact that \(\rho=\rho'=0\).

Alternatively, this case is covered by \cite[Theorem 0.5(ii)]{farkas}, along with Corollary \ref{cor:farkas_nice}.
\end{proof}

From here, we complete the proof of Theorem \ref{thm:main}, exactly as we did in characteristic \(0\), except that Proposition \ref{prop:serre_II_positive} is now strictly stronger than Proposition \ref{prop:serre_II}.

\subsection{Dimensional transversality}\label{sec:expected_dimension}

In this section, we prove special cases of Theorem \ref{thm:farkas} in arbitrary characteristic. We work only in the generality needed to complete the proof of Theorem \ref{thm:main} in arbitrary characteristic, as explained in the previous section.

\begin{proposition}\label{prop:canonical}
Fix \(r,s\) with \(s \leq r-2\), and \((g,d,k)=(r+1,2r,k)\), where \(k \geq s+2\), cf. Definition \ref{def:terminal}(Ia). Let \(C\) be a general curve of genus \(g=r+1\). Then, the moduli space of ILS \(G^{r,r-s-1}_{d,k}(C)\) is pure of dimension
\[
\rho_{\bullet}=(r-s)(s+1)-(r-s-1)k,
\]
the expected, and is empty if \(\rho_{\bullet}<0\).
\end{proposition}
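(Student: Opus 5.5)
Because $g=r+1$ and $d=2r=2g-2$, we have $g-d+r=1$ and $\rho(g,r,d)=0$. A linear series $(\cL,W)\in G^r_{2r}(C)$ has $h^0(\cL)\ge r+1=g$ in degree $2g-2$, so Riemann--Roch forces $\cL\cong K_C$ and $W=H^0(C,K_C)$. Thus $G^r_{2r}(C)$ is the single canonical series, and the plan is to parametrize $G^{r,r-s-1}_{2r,k}(C)$ explicitly and bound its dimension stratum by stratum. A point of this space is a pair $(D,V)$, where $D\in C_k$ is an effective divisor of degree $k$ and $V\subset H^0(C,K_C(-D))$ has dimension $r-s$. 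The lower bound $\dim\ge\rho_\bullet$ on every component is automatic from the degeneracy-locus description recalled in \S\ref{sec:BN_secant}, and it holds in any characteristic. So it suffices to prove $\dim\le\rho_\bullet$ on every stratum, and that every stratum is empty when $\rho_\bullet<0$.

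Stratify $C_k$ by $a:=h^0(C,\cO(D))-1$. By Riemann--Roch, $h^0(K_C(-D))=g-1-k+h^0(\cO(D))=r-k+a+1$, so over the stratum $C^a_k\setminus C^{a+1}_k$ the fiber is the Grassmannian $\Gr(r-s,\,r-k+a+1)$. This fiber is nonempty only if $a\ge k-s-1$; in particular $a\ge1$, since $k\ge s+2$. For the base I use the Brill--Noether theorem for the general curve $C$, which holds in arbitrary characteristic (e.g.\ via degeneration to chains of elliptic curves). It gives $\dim W^a_k(C)=\rho(g,a,k)$, with $W^a_k(C)$ empty if this is negative, and that $W^{a}_k\setminus W^{a+1}_k$ is dense in each component. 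Hence the stratum $C^a_k\setminus C^{a+1}_k$, which is a $\bP^a$-bundle over $W^a_k\setminus W^{a+1}_k$, has dimension at most $\rho(g,a,k)+a$. I will also treat the degenerate range $k>2g-2$ separately, where $H^0(K_C(-D))=0$ and the space is empty.

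A direct computation with $g=r+1$ then gives
\[
\rho(g,a,k)+a+(r-s)(a+1+s-k)=\rho_\bullet+a\,(k-s-1-a).
\]
Using $\rho(g,a,k)+a-k=-a(g-k+a)$, the difference from $\rho_\bullet$ collapses to this form. Since nonemptiness of the fiber requires $a\ge k-s-1$, the correction term $a(k-s-1-a)$ is $\le0$. So every stratum has dimension at most $\rho_\bullet$, with equality possible only for $a=k-s-1$. Combined with the lower bound, every component has dimension exactly $\rho_\bullet$. If $\rho_\bullet<0$, every stratum has negative expected dimension and so is empty, hence $G^{r,r-s-1}_{d,k}(C)$ is empty.

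The main obstacle I expect is making the stratum-dimension bound rigorous in arbitrary characteristic. One must check that $C^a_k\setminus C^{a+1}_k\to W^a_k\setminus W^{a+1}_k$ is a $\bP^a$-bundle and that the Brill--Noether dimension statement, not Gieseker--Petri, is all that is needed. The algebra itself is routine once the identity above is verified.
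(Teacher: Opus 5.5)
Your proposal is correct and is essentially the paper's argument. The paper also identifies $W$ with the canonical series and stratifies by the value of $h^0(K_C(-D))=r-s+\delta$. Your index $a=h^0(\cO(D))-1$ is the Serre-dual parametrization of the same strata, with $a=k-s-1+\delta$. Your correction term $a(k-s-1-a)$ equals the paper's $\delta(s+1-k)-\delta^2$, and both arguments rely on the degeneracy-locus lower bound to conclude purity.
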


\begin{proof}
Let \(\uw=(\cL,D,V \subset W)\in G^{r,r-s-1}_{d,k}(C)\) be a point. Then, \((\cL,W)\in G^r_d(C)\) must be the canonical linear series. Therefore, the space of ILS is identified with the space of choices of:
\begin{itemize}
\item a linear series \((\cN,V)\in G^{r-s-1}_{d-k}(C)\), and
\item an effective divisor \(D\in |K_C\otimes \cN^\vee|\).
\end{itemize}

The space \(G^{r,r-s-1}_{d,k}(C)\) is further stratified by the value of \(h^0(\cN)\); write \(G_\delta \subset G^{r,r-s-1}_{d,k}(C)\) for the locally closed subset where \(h^0(\cN)=r-s+\delta\), where \(\delta\geq 0\). Because
\[
k\geq s+2 \implies g-(d-k)+(r-s-1)<0,
\]
\(\cN\) lies in the Brill-Noether-special range (see \cite[Lemma IV.3.5]{acgh}). Thus, the dimension of the space \(W^{(r-s-1)+\delta}_{d-k}(C)\) of line bundles \(\cN\) underlying \(G_\delta\) is
\[
\rho(g,(r-s-1)+\delta,d-k)=\rho' + \delta\bigl((d-k)-g-2(r-s-1)-1\bigr)-\delta^2.
\]
For a fixed \(\cN\in W^{(r-s-1)+\delta}_{d-k}(C)\), the space of possible linear series \(V\) has dimension
\[
\dim\bigl(\Gr(r-s,r-s+\delta)\bigr)=(r-s)\delta,
\]
and the dimension of the space of possible \(D\in |K_C\otimes \cN^\vee|\) is
\[
h^0(K_C\otimes \cN^\vee)-1=k-s+\delta-1.
\]

Therefore, we have
\begin{align*}
\dim\bigl(G_\delta\bigr)&=\bigl(\rho'+k-s-1\bigr)+\delta\bigl((d-k)-g-(r-s)+2\bigr)-\delta^2 \\
&= \bigl(\rho'+k-s-1\bigr)+\delta\bigl(s+1-k\bigr)-\delta^2
\end{align*}
which is a strictly decreasing function in \(\delta\) because $k \geq s+2$. It follows that
\[
\dim\bigl(G^{r,r-s-1}_{d,k}(C)\bigr)=\dim\bigl(G_0\bigr)=\rho'+k-s-1=\rho_{\bullet},
\]
cf. Lemma \ref{lem:rho_0_profile}(a).
\end{proof}

\begin{proposition}\label{prop:hurwitz}
Fix \(r,s,k\) with \(s\leq r-2\) and \(k\geq s+2\). Set
\[
(g,\dtilde,k;\rtilde,\ws)=(2r+2,r+2+k,k;k-s,k-s-2),
\]
cf. Proposition \ref{prop:serre_Ib}. Let \(C\) be a general curve of genus \(g\). Then, the moduli space of ILS \(G^{\rtilde,\rtilde-\ws-1}_{\dtilde,k}(C)=G^{\rtilde,1}_{\dtilde,k}(C)\) is pure of dimension
\[
\rho_{\bullet}=(r-s)(s+1)-(r-s-1)k,
\]
the expected, and is empty if \(\rho_{\bullet}<0\).
\end{proposition}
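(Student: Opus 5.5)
The plan is to exploit that the sub-pencil is rigid, and then to count dimensions via Hurwitz--Brill--Noether theory for the resulting cover of \(\bP^1\). A point \((\cM,D,\widetilde{V}\subset\widetilde{W})\in G^{\rtilde,1}_{\dtilde,k}(C)\) contains the pencil \((\cM(-D),\widetilde{V})\in G^1_{r+2}(C)\). Since \(\rho(2r+2,1,r+2)=0\) and \(C\) is general, the Brill--Noether and Gieseker--Petri theorems give the following, valid in any characteristic:
\begin{itemize}
\item \(G^1_{r+2}(C)\) is a finite reduced set of complete base-point-free pencils \(\cN\) with \(h^0(\cN)=2\);
\item each such \(\cN\) defines a degree \(r+2\) cover \(\phi\colon C\to\bP^1\);
\item since the forgetful map from the Hurwitz space \(\mathcal{H}_{r+2,2r+2}\) to \(\cM_{2r+2}\) is dominant (generically finite) when \(\rho=0\), this cover \(\phi\) is a general point of the Hurwitz space.
\end{itemize}
So we may fix \(\cN\) and \(\phi\). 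The ILS space over \(\cN\) is then the space of pairs \((D,\widetilde{W})\) with \(D\) effective of degree \(k\) and \(\widetilde{W}\subset H^0(\cN(D))\) of dimension \(\rtilde+1=k-s+1\) containing the fixed \(2\)-dimensional subspace \(1_D\cdot H^0(\cN)\).

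Next we stratify by \(a=h^0(\cN(D))\). On the stratum \(\Sigma_a\) of divisors \(D\) with \(h^0(\cN(D))=a\), the choices of \(\widetilde{W}\) form \(\Gr(k-s-1,a-2)\), of dimension \((k-s-1)(a-1-k+s)\). The stratum \(\Sigma_a\) fibres over the locus of line bundles \(\cM=\cN(D)\) of degree \(r+2+k\) with \(h^0(\cM)=a\) and \(h^0(\cM\otimes\cN^{\vee})\ge1\). The fibre over \(\cM\) is \(|\cM\otimes\cN^\vee|\). All of these conditions are conditions on the splitting type of \(\phi_*\cM\) (together with its twists by \(\phi^*\cO(-1)\), since \(\cN=\phi^*\cO(1)\)). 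Hence we will bound \(\dim\Sigma_a\) by the dimension of the splitting loci \(W^{\vec e}(C)\) of the general cover \(\phi\). By Hurwitz--Brill--Noether theory (Larson; Larson--Larson--Vogt, or Cook-Powell--Jensen in arbitrary characteristic), these loci have dimension exactly \(g-u(\vec e)\), where \(u(\vec e)=h^1(\bP^1,\mathcal{E}nd(\cO(\vec e)))\).

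Assembling: for each splitting type \(\vec e\) of \(\phi_*\cM\) (degree \(r+2\) over \(\bP^1\)), the contribution is
\[
g-u(\vec e)+h^0(\cM(-\phi^*\cO(1)))-1+(k-s-1)\bigl(h^0(\cM)-1-k+s\bigr),
\]
with both \(h^0\)'s read off from \(\vec e\). The goal is to show that this is at most \(\rho_\bullet=(r-s)(s+1)-(r-s-1)k\), with equality only for the ``most balanced'' type compatible with \(h^0(\cM)\ge k-s+1\) and \(h^0(\cM\otimes\cN^\vee)\ge1\). Given that inequality, every component has dimension at most \(\rho_\bullet\). Since \(\rho_\bullet\) is also a lower bound for every nonempty component (by the degeneracy-locus description, \S\ref{sec:BN_secant}), purity follows, as does emptiness when \(\rho_\bullet<0\).

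The main obstacle is this combinatorial optimisation over splitting types. To handle it, I would first write \(\vec e=(e_1\le\dots\le e_{r+2})\) with \(\sum e_i=\dtilde-g-r-1=k-r-1\), so that
\[
h^0(\cM)=\textstyle\sum\max(e_i+1,0),\qquad h^0(\cM\otimes\cN^\vee)=\sum\max(e_i,0).
\]
Then I would show that moving \(\vec e\) one step toward balanced (incrementing a small \(e_i\) and decrementing a large one) lowers \(u\) by at least as much as it lowers the other two terms. This is the same kind of monotonicity argument as the \(\delta^2\) computation in Proposition~\ref{prop:canonical}. If a clean exchange argument fails, the fallback is to use directly Larson's formula for the dimension of the Brill--Noether loci \(W^{a-1}_{\dtilde}\) of a general \(k\)-gonal curve, and to treat the requirement \(h^0(\cM\otimes\cN^\vee)\ge1\) as a further codimension count.
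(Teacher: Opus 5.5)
Your reduction is the same as the paper's. You fix one of the finitely many rigid pencils $\cN$, i.e. a degree $r+2$ cover $\phi\colon C\to\bP^1$, and parametrize the fibre by $(\cM,D,\widetilde W)$. You then stratify by the splitting type $\vec e$ of $\phi_*\cM$, whose degree is $\dtilde-g-r-1=k-2r-1$, not $k-r-1$. Hurwitz--Brill--Noether theory gives $\dim\Sigma_{\vec e}=g-u(\vec e)$, and purity comes from the degeneracy-locus lower bound. (In positive characteristic, the claim that $\phi$ is a general point of the Hurwitz space also needs an irreducibility input, cf.\ Remark~\ref{rem:hurwitz_characteristic}.)

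The gap is the last step. The inequality $\Delta_{\vec e}\le\rho_\bullet$ over all admissible $\vec e$ is everything that remains, and you leave it unproved. The paper does not prove it directly either; the idea you are missing is that it can be bypassed. Nonemptiness and dimension of each $\Sigma_{\vec e}$ are the same in every characteristic, and admissibility and all three terms of $\Delta_{\vec e}$ are read off from $\vec e$. Hence $\max_{\vec e}\Delta_{\vec e}$ --- that is, $\dim G^{\rtilde,1}_{\dtilde,k}(C)$, including whether it is empty --- is independent of characteristic. It therefore equals its characteristic-$0$ value, which Theorem~\ref{thm:farkas} gives as $\rho_\bullet$.

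If you want the direct route instead, your exchange claim is false as stated, so it does not close the step.

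\emph{The cost of a transfer.} On the admissible region one has $\max e_i\ge 1$ and $\min e_i\le-2$; otherwise $h^0(\cM)=\sum(e_i+1)=k-r+1<k-s+1$. So a transfer $x\mapsto x+1$, $y\mapsto y-1$ with $x\le-2$ and $y\ge 1$ lowers both $h^0(\cM)$ and $h^0(\cM\otimes\cN^\vee)$ by exactly one. This costs exactly $k-s$ in the other two terms.

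\emph{Non-extremal transfers.} If $x,y$ are not extremal entries, entries strictly below $x$ or strictly above $y$ contribute nothing to the drop of $u$, which can be as small as $2$. For example, with $(r,s,k)=(3,1,4)$, where $\rho_\bullet=0$, the transfer $(-6,-2,1,2,2)\to(-6,-1,0,2,2)$ lowers $u$ from $31$ to $29$, so $\Delta_{\vec e}$ decreases.

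\emph{Extremal transfers.} For a transfer between a minimal and a maximal entry, $u$ does drop by at least $r+2$. This exceeds $k-s$ whenever $\rho_\bullet\ge0$. Even so, three things remain to be done:
\begin{itemize}
\item treat the configurations where this transfer leaves the admissible region, namely when already $h^0(\cM)=k-s+1$, or when the maximum is a $1$ occurring once;
\item prove there a separate lower bound on $u$ giving $\Delta_{\vec e}\le\rho_\bullet$, with equality only at $(e_1,\ldots,e_{r+s-k+2},0,\ldots,0,1)$;
\item show emptiness separately when $k-s>r+2$.
\end{itemize}

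Your fallback is not yet an argument either. Both the locus where $h^0(\cM\otimes\cN^\vee)\ge1$ and the dimension of $|\cM\otimes\cN^\vee|$ jump with the splitting type, which is exactly why one stratifies by $\vec e$ in the first place.
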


\begin{proof}
Let \(\uw=(\cL,D,V\subset W)\in G^{\rtilde,1}_{\dtilde,k}(C)\) be a general point. Then, \((\cN,V)\in G^1_{r+2}(C)\) is one of finitely many possible points, underlying a morphism \(f\colon C\to \bP^1\) of degree \(r+2\). To compute the dimension of \(G^{\rtilde,1}_{\dtilde,k}(C)\) near \(\uw\), it suffices to fix \(f\). Then, an ILS \(\uw\) is determined by the data of:
\begin{itemize}
\item a line bundle \(\cL\in W^{\rtilde}_{\dtilde}(C)\), such that \(\cL\otimes \cN^\vee=\cL\otimes f^*\cO_{\bP^1}(-1)\) is effective,
\item an effective divisor \(D\in |\cL\otimes \cN^\vee|\), and
\item a linear series \(W\subset H^0(C,\cL)\) of rank \(\rtilde+1\), containing \(V\).
\end{itemize}

To understand the space of possible \(\cL\in \Pic^{\dtilde}(C)\) satisfying the first condition, we will apply the main dimensional transversality theorem of Hurwitz-Brill-Noether theory, \cite[Theorem 1.2]{hlar}. Namely, consider the vector bundle \(f_*\cL\) on \(\bP^1\), which has rank \(\dtilde-k=r+2\) and degree
\[
\dtilde-g+1-(r+2)=k-2r-1.
\]
Then, we have a splitting
\[
f_*\cL\cong \cO(e_1)\oplus \cdots \oplus \cO(e_{r+2}),
\]
where \(e_1\leq \cdots \leq e_{r+2}\). 

The Picard variety is stratified by loci \(\Sigma_{\vec e}\subset \Pic^{\dtilde}(C)\) for which \(f_*\cL\) has splitting type \(\vec e=(e_1,\ldots,e_{r+2})\). \cite[Theorem 1.2]{hlar} asserts that \(\Sigma_{\vec e}\) is non-empty and pure of dimension
\[
g-\sum_{1\leq i<j\leq r+2}(e_j-e_i-1),
\]
if this quantity is non-negative, and empty otherwise. In terms of the splitting of \(f_*\cL\), the condition that \(h^0(\cL)\geq \rtilde+1\) is equivalent to
\[
\sum_{i=1}^{r+2}\max\{0,e_i+1\}\geq \rtilde+1,
\]
and the condition that \(\cL\otimes f^*\cO_{\bP^1}(-1)\) is effective is equivalent to simply \(e_{r+2}\geq 1\), by the projection formula.

Now, the dimension of \(G^{\rtilde,1}_{\dtilde,k}(C)\) near \(\uw\) is equal to
\[
\begin{aligned}
\Delta_{\vec e}
&:= \dim(\Sigma_{\vec e})+\bigl(h^0(C,\cL\otimes\cN^\vee)-1\bigr)+\dim\bigl(\Gr(\rtilde-1,H^0(C,\cL)/V)\bigr)\\
&= \dim(\Sigma_{\vec e})+\bigl(h^0(\bP^1,f_*\cL\otimes \cO_{\bP^1}(-1))-1\bigr)+\dim\bigl(\Gr(\rtilde-1,h^0(\bP^1,f_*\cL)-2)\bigr).
\end{aligned}
\]
All three quantities are in particular determined by the splitting type \(\vec e\). It is now a purely numerical statement that \(\Delta_{\vec e}\leq \rho_{\bullet}\), with equality if and only if
\[
\vec e=(e_1,\ldots,e_{r+s-k+2},0,\ldots,0,1),
\]
where \(e_{r+s-k+2}-e_1\leq 1\). While one can prove this numerical statement directly, we have the following cheaper finish.

Namely, because the non-emptiness and dimensional transversality of the splitting locus \(\Sigma_{\vec e}\subset \Pic^{\dtilde}(C)\) are independent of characteristic, the dimension of \(G^{\rtilde,1}_{\dtilde,k}(C))\) is also independent of characteristic. Therefore, we may conclude immediately by the characteristic \(0\) statement, Theorem \ref{thm:farkas}.
\end{proof}

\begin{remark}\label{rem:hurwitz_characteristic}
As explained in \cite{hlar}, the main theorems of Hurwitz-Brill-Noether theory hold for a general point \(f\colon C\to \bP^1\) in the Hurwitz space of simply branched covers, lying in the same component as a particular degenerate cover \(f\). In our case, \(f\) may be regarded as a general point of the universal Brill-Noether variety \(W^1_{r+2}\) over the moduli space of smooth curves, which is irreducible in any characteristic \cite[\S 10]{llv}. Alternatively, the irreducibility of simply branched Hurwitz spaces is now known in arbitrary characteristic \cite{cht2}.
\end{remark}

\subsection{Transversality for limit linear series}\label{sec:lls}

Let $(C,p_0) \in \cM_{g,1}$ be a general pointed curve. In this section, we study, in arbitrary characteristic, linear series on $(C,p_0)$ with pointed Brill-Noether number $0$, along with their limits on chains of elliptic curves. Such objects underlie the construction of degenerate ILS in the proof of \cite[Theorem 0.5]{farkas}, which we verify smooth to nice maps in Corollary \ref{cor:farkas_nice}. We assume a basic familiarity with the theory of limit linear series \cite{eh_lls}.

Fix a rank $r\ge1$, a degree $d$, and a vanishing sequence $0 \leq a_0 < \cdots < a_r \leq d$, for which
\[
\rho(g,r,d,\vec{a}) := \rho(g,r,d) - \sum_{i=0}^r (a_i-i) = 0.
\]
Consider a one-parameter degeneration of $(C,p_0)$ to a nodal curve $(C_0,p_0)$, where $C_0 = E_1 \cup \cdots \cup E_g$ is a chain of elliptic curves with $p_0 \in E_1$, with smooth total space. For $i = 1,\ldots,g-1$, write $p_i = E_i \cap E_{i+1}$, and fix also a smooth point $p_g \in E_g$. Assume that, for all $i = 1,\ldots,g$, the divisor $p_{i-1}-p_i$ is not torsion on $E_i$. The set of limit linear series of rank $r$ and degree $d$ on $C_0$ with vanishing sequence $\vec{a}$ at $p_0$ admits an explicit combinatorial description; see, e.g., \cite[\S 1]{lt}. We review the important properties.

Let $W_0$ be such a limit linear series, and let $(\cL_i,W_i)$ be the aspect of $W_0$ on $E_i$. Let
\[
a_0^i < \cdots < a_r^i \quad \text{and} \quad b_0^i > \cdots > b_r^i
\]
be the vanishing sequences of $(\cL_i,W_i)$ at $p_{i-1}$ and $p_i$, respectively. For $i=1$, we have $a^1_j = a_j$ for all $j$. For $i$ arbitrary, there is a unique index $t_i \in \{0,\ldots,r\}$ for which $a_{t_i}^i + b_{t_i}^i = d$. For all $j \neq t_i$, we have instead $a_j^i + b_j^i = d-1$. From here, we have necessarily $\cL_i \cong \cO_{E_i}(a_{t_i}^i p_{i-1} + b_{t_i}^i p_i)$. A basis of $W_i$ is given by the unique, up to scaling, nonzero sections $\sigma^i_j\in H^0(\cL_i(-a_j^i p_{i-1} - b_j^i p_i))$, for $j = 0,\ldots,r$. Because $p_{i-1}-p_i$ is assumed not to be a torsion divisor on $E_i$, the remaining vanishing point of $\sigma^i_j$, for $j \neq t_i$, is not equal to $p_{i-1}$ or $p_i$.

The collection of integers $t_1,\ldots,t_g$, along with the fact that $W_0$ is refined ($b^{i-1}_j+a^i_j=d$ for all $i,j$), determines the limit linear series $W_0$. There is an additional combinatorial condition on the $t_i$, which we will not need to recall. Each of the limit linear series described above defines a smooth point of dimension $0$ on the moduli space of limit linear series on $(C_0,p_0)$, and smooths to a linear series on $(C,p_0)$. In particular, the moduli space of limit linear series on $(C,p_0)$ with vanishing sequence $\vec{a}$ is reduced of dimension $0$; see \cite{osserman_connected} for a generalization.

\begin{proposition}\label{prop:rho_0_unramified}
Let \(g,r,d\) be integers with \(\rho(g,r,d)=0\). Let \(C\) be a general curve of genus \(g\), and let \(f\colon C\to \bP^r\) be the non-degenerate map underlying a point of \(G^r_d(C)\). Then, \(f\) is generically unramified.
\end{proposition}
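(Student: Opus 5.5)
Since \(\rho(g,r,d)=0\), the space \(G^r_d(C)\) is a finite reduced scheme, and every point is bpf and complete, so \(f\) is a well-defined non-degenerate morphism. I will argue by degeneration to the chain of elliptic curves \(C_0=E_1\cup\cdots\cup E_g\) from \S\ref{sec:lls}, with trivial vanishing sequence \(\vec a=(0,\ldots,r)\) at \(p_0\), so the pointed Brill--Noether number is \(\rho(g,r,d)=0\). The limit linear series \(W_0\) are described explicitly there, each is a smooth isolated point, and each smooths to a unique linear series on the general fiber. Since the moduli space of limit linear series is finite and reduced, every point of \(G^r_d(C)\) for general \(C\) arises as such a smoothing. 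It therefore suffices to show, for each limit linear series \(W_0\), that the smoothed map is generically unramified.

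Being \emph{everywhere} ramified is a closed condition in families. In characteristic \(p\), a morphism \(C\to\bP^r\) is everywhere ramified exactly when its differential vanishes identically, that is, when it factors through Frobenius. This happens if and only if the ramification divisor, equivalently the Wronskian of the series, vanishes identically. The Wronskian construction works in families, including for the refined limit linear series on a chain, where it is computed aspect by aspect. So I will exhibit a single aspect \((\cL_i,W_i)\) on some \(E_i\) that is not everywhere ramified. The argument is that if the generic fiber's series were everywhere ramified, the flat limit of its (identically zero) Wronskian would force every aspect to have vanishing Wronskian on an open set of its component, which would be a contradiction. Put more directly, inseparability of the family of maps \(\mathcal{C}\to\bP^r\) would be inherited by each component of the special fiber.

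To find a separable aspect, I use the explicit basis \(\sigma^i_j\). On \(E_i\), the sections \(\sigma^i_j\) for \(j\neq t_i\) vanish to order exactly \(a^i_j\) at \(p_{i-1}\). Because \(p_{i-1}-p_i\) is non-torsion, their remaining simple zero is a point distinct from \(p_{i-1},p_i\). The vanishing orders at \(p_{i-1}\), namely \(a^i_0<\cdots<a^i_r\), are distinct. Now suppose \(W_i\) were everywhere ramified, so the map factors through Frobenius. Then all ratios \(\sigma^i_j/\sigma^i_0\) would be \(p\)-th powers, and every difference \(a^i_j-a^i_0\), as well as every difference of vanishing orders at any point, would be divisible by \(p\). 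On \(E_1\) we have \(a^1_j=j\), so \(a^1_1-a^1_0=1\) is not divisible by \(p\). Hence the aspect on \(E_1\) is separable, and the smoothing is generically unramified.

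The main obstacle is making rigorous the passage from separability of one aspect of the limit to separability of the smoothed map. I would first try the following. Ramification of \(W_0\) at \(p_0\) on \(E_1\) is the trivial sequence \((0,1,\ldots,r)\), and this is exactly the vanishing sequence imposed in the family. So the smoothed series on \(C\) has vanishing sequence \((0,1,\ldots,r)\) at the general point \(p_0\), with the general fiber specializing to the special one. A series with \(a_1-a_0=1\) at some point cannot factor through Frobenius, which gives the conclusion directly, and it avoids handling Wronskians on the nodal curve.
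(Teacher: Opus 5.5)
Your final argument is correct. It shares the paper's strategy: degenerate to the elliptic chain with trivial conditions at \(p_0\), and find on an aspect of each limit series a point whose vanishing sequence begins \(0,1\). The difference is the witness point. The paper takes an interior simple zero \(q^i_j\) of some \(\sigma^i_j\), \(j\neq t_i\), where \(\sigma^i_{t_i}\) is nonzero. You take the marked point \(p_0\in E_1\), where the aspect has vanishing sequence exactly \((0,1,\ldots,r)\).

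You should state the reason for this exactness correctly. It is not that the sequence is ``imposed'', since the condition \(\vec a=(0,\ldots,r)\) is vacuous. Rather, any strictly larger \(\vec a'\) at \(p_0\) would give \(\rho(g,r,d,\vec a')<0\), so no limit series has it. Upper semicontinuity of vanishing orders along the section \(p_0\) then forces \(a_j(W,p_0)=j\) for every \(W\) on the nearby smooth fiber; each such \(W\) has a limit by properness of the space of limit linear series.

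Your route has two advantages. It makes explicit the specialization step that the paper compresses into ``it suffices to show''. It also proves more: a general point of \(C\) has vanishing sequence \((0,1,\ldots,r)\), so \(f\) is classical, not merely separable. The paper's witness, in turn, works for an arbitrary pointed sequence \(\vec a\).

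One correction to your side remarks. In characteristic \(p\), identical vanishing of the Wronskian characterizes \emph{non-classical} series, not everywhere-ramified maps. For example, for \(p\ge 3\) the map \(t\mapsto[1:t:t^p]\) is unramified on \(\mathbb{A}^1\), yet every point is a flex. So your Wronskian route, as written, does not combine with your Frobenius argument on \(E_1\): it would instead need the \(E_1\)-aspect's Wronskian to be nonzero. That does hold, by its vanishing sequence at \(p_0\). The direct route you end with avoids the issue.
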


\begin{proof}
It suffices to show that, for each of the limit linear series $W_0$ on $C_0$ described above, not every point of $C_0$ is a ramification point of $W_0$. This is immediate from the fact that, for any $i$ and any $j \neq t_i$, the section $\sigma^i_j \in W_i$ has a unique point other than $p_{i-1}$ and $p_i$ of simple vanishing on $E_i$.
\end{proof}

\begin{proposition}\label{prop:pointed_rho_zero}
Let $(C,p_0)$ be a general pointed curve of genus $g$. Fix $r,d$, and a vanishing sequence $\vec{a}$ such that $\rho(g,r,d,\vec{a})=0$. Let $W$ be a linear series of rank $r$, degree $d$, and vanishing sequence $\vec{a}$ at $p_0$.

Fix an integer $s$ with $-1 \leq s \leq r-2$, and let $V \subset W$ be the unique subspace of sections of dimension $r-s$ with vanishing sequence $a_{s+1}<\cdots<a_r$ at $p_0$. Then, $V$ has no base points on $C$ away from $p_0$.
\end{proposition}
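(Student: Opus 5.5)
We argue by degeneration to the chain of elliptic curves $C_0=E_1\cup\cdots\cup E_g$ described above, exactly as in the proof of Proposition \ref{prop:rho_0_unramified}. The moduli space of linear series on $(C,p_0)$ with vanishing sequence $\vec a$ at $p_0$ is reduced of dimension $0$, and each point is the smoothing of one of the refined limit linear series $W_0$ on $C_0$ determined by the integers $t_1,\ldots,t_g$. Having base points away from $p_0$ is a closed condition in families. It therefore suffices to show that the limit $V_0$ of $V$ has no base points on $C_0$ away from $p_0$. Concretely, for each $i$, the aspect $V_i\subset W_i$ on $E_i$ should have no base points on $E_i\setminus\{p_{i-1},p_i\}$. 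At the node $p_i$, the vanishing orders of $V_i$ and $V_{i+1}$ should moreover be compatible with the limit being base-point-free there, i.e. $\min$ vanishing of $V_i$ at $p_i$ plus $\min$ vanishing of $V_{i+1}$ at $p_i$ equals $d$. If $V_0$ fails to have a base point only because it is not refined, we would instead apply a twisting argument, so we aim for the refined statement.

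The first step is to identify the aspects $V_i$. On $E_1$, $V_1$ is the span of the sections with vanishing $\ge a_{s+1}$ at $p_0$, namely $\langle \sigma^1_{s+1},\ldots,\sigma^1_r\rangle$. Since the $\sigma^i_j$ form a basis adapted to both points (vanishing $a^i_j$ at $p_{i-1}$ and $b^i_j$ at $p_i$), the limit of the subspace of sections of high vanishing at $p_0$ propagates along the chain. Taking $V_i$ to be the span of $\sigma^i_j$ for $j\ge s+1$ gives a compatible (refined) collection, since $b^{i-1}_j+a^i_j=d$. By uniqueness of the limit of the $(r-s)$-dimensional subspace (the limit linear series being a reduced isolated point), $V_0$ has exactly these aspects. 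I would verify this uniqueness by noting that any limit of $V$ must have aspect on $E_i$ with vanishing orders at $p_{i-1}$ at least those dictated by the previous component, which forces the indices $j\ge s+1$ inductively.

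The second step is base-point-freeness of $V_i$ on $E_i\setminus\{p_{i-1},p_i\}$. The minimal vanishing of $V_i$ at $p_{i-1}$ is $a^i_{s+1}$ and at $p_i$ is $b^i_r$. The sections $\sigma^i_{s+1}$ and $\sigma^i_r$ lie in $V_i$. If one of them has index $t_i$, it vanishes only at $p_{i-1},p_i$ and we are done. Otherwise both have indices $\neq t_i$, so each has exactly one extra simple zero, at a point $q$ with $q\sim \mathcal{L}_i - a p_{i-1}-bp_i$. If $r-s\ge2$, the extra zeros of $\sigma^i_{s+1}$ and $\sigma^i_{r}$ are given by distinct divisor classes, since their $(a,b)$ differ. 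Hence the two points differ, because $p_{i-1}-p_i$ is non-torsion, and $V_i$ is base-point-free away from the nodes. Here $s\le r-2$ is used.

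The main obstacle is the first step: rigorously justifying that the limit of $V$ is the refined collection above, and that refinedness at the nodes precludes a base point of $V$ near the nodes in the smoothing. I would handle this via the standard Eisenbud--Harris fact that a refined limit of a sub-series with no base points on the aspects, and with sum of vanishing orders $d$ at each node, smooths to a base-point-free series away from the limit of $p_0$.
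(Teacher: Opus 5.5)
Your proposal is correct and is essentially the paper's proof. It uses the same degeneration to the elliptic chain, and the same inductive identification $V_i=\langle\sigma^i_{s+1},\ldots,\sigma^i_r\rangle$ from the compatibility conditions. It then uses non-torsion of $p_{i-1}-p_i$ to separate the extra zeros of the $\sigma^i_j$, and handles the nodes by the semistable-reduction argument that the paper relegates to a footnote.

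One correction: the condition needed at the node $p_i$ is not that the \emph{minimal} vanishing orders of $V_i$ and $V_{i+1}$ sum to $d$. In fact here $b^i_r+a^{i+1}_{s+1}<d$ since $s\le r-2$. What is needed is the index-by-index refinedness $b^i_j+a^{i+1}_j=d$ for $j\ge s+1$, which you do verify. Refinedness forces the aspects on the rational chains inserted at the nodes to be spanned by monomials, hence base-point-free away from their nodes.
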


Note that the case $s=-1$ amounts to the statement that has no base points on $C$ away from $p_0$.

\begin{proof}
Fix a degeneration
\[
(C,p_0) \rightsquigarrow (C_0 = E_1 \cup \cdots \cup E_g,p_0)
\]
as above, and let $W_0$ be the limit of $W$ on $C_0$. Let $V_0$ be the limit of the subspace $V \subset W$ on $C_0$, and write $V_1,\ldots,V_g$, with $V_i \subset W_i$, for the aspect of $V_0$ on $E_i$. We claim that, for all $i = 1,\ldots,g$, we have
\[
V_i = \langle \sigma^i_{s+1},\ldots,\sigma^i_r\rangle.
\]
Equivalently, $V_i \subset W_i$ is the unique subspace with vanishing sequences
\[
a^i_{s+1}<\cdots<a^i_r \quad \text{and} \quad b^i_{s+1}>\cdots>b^i_r
\]
at $p_{i-1}$ and $p_i$, respectively. 

We argue by induction on $i$. This is clear for $i = 1$, by the requirement that $V$ have vanishing sequence $a_{s+1}<\cdots<a_r$ at $p_0$. Then, if $V_i = \langle \sigma^i_{s+1},\ldots,\sigma^i_r\rangle$, then in order for $V_i,V_{i+1}$ to satisfy the compatibility condition for limit linear series, the vanishing sequence of $V_{i+1}$ at $p_i$ must be \emph{at least}
\[
(d-b^i_{s+1},\ldots,d-b^i_r) = (a^{i+1}_{s+1},\ldots,a^{i+1}_r).
\]
It follows that we must have $V_{i+1} = \langle \sigma^{i+1}_{s+1},\ldots,\sigma^{i+1}_r\rangle$, as needed.

Now, it suffices to show that each $V_i$ has no base points away from $p_{i-1}$ and $p_i$.\footnote{If so, a base point of $V$ could only specialize to a node of $C_0$, but this is also not possible by standard semi-stable reduction argument.}

Recall that each section $\sigma^i_j$ has at most one point $q^i_j$ of simple vanishing away from $p_{i-1}$ and $p_i$. Moreover, varying over all $j$, the points $q^i_j$ must be pairwise distinct, or else $p_{i-1}-p_i$ is torsion. Because $\dim(V_i)=r-s \geq 2$, the base-point-freeness follows.
\end{proof}

\begin{corollary}\label{cor:farkas_nice}
Let $(g,d,k;r,s)$ be a terminal tuple of type (II), with $k \leq 2s+1$. Let $\underline{W_0}$ be the limit ILS on the singular curve $C_0 = Y \cup Z$ of genus $g$ constructed in \cite[\S 3]{farkas}. Then, $\underline{W_0}$ smooths to an ILS $\uw \in G^{r,r-s-1}_{d,k}(C)$ on a smooth curve $C$, which is a smooth point of dimension $0$, the expected, and bpf.
\end{corollary}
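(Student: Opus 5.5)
The plan is to verify the three properties (smoothability, smoothness of dimension $0$, base-point-freeness) on the degenerate curve $C_0=Y\cup Z$, and then transport them to a general smoothing $C$ by upper semicontinuity and openness. First, I will recall Farkas's construction in \cite[\S 3]{farkas}. I expect $Y,Z$ to be general pointed curves $(Y,p)$, $(Z,p)$ glued at a single node $p$. The aspects $(\cL_Y,W_Y)$ and $(\cL_Z,W_Z)$ are refined linear series with complementary vanishing sequences at $p$, each of pointed Brill-Noether number $0$. The divisor $D$ specializes to a divisor $D_Y$ supported on one component, say $Y$, away from $p$. The subspaces $V_Y\subset W_Y$ and $V_Z\subset W_Z$ are the distinguished subspaces of sections with the top $r-s$ vanishing orders at $p$, twisted by $D_Y$ on $Y$.

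The core step is dimension counting on the space of limit ILS. On $Z$, the aspect $W_Z$ is one of finitely many reduced points, by the pointed $\rho=0$ transversality recalled in \S\ref{sec:lls}, valid in arbitrary characteristic via chains of elliptic curves. Since $V_Z$ is then forced to be the unique subspace with the prescribed vanishing at $p$, as in Proposition \ref{prop:pointed_rho_zero}, the $Z$-aspect is rigid. On $Y$, I would perform the analogous count. The Serre-dual/ILS structure should again reduce to pointed Brill-Noether loci with $\rho=0$, together with the condition that $V_Y$ vanish on $D_Y$. I would show that this cuts out a reduced $0$-dimensional scheme by identifying the tangent space with that of the pointed Brill-Noether loci, which vanishes by the reducedness statement quoted before Proposition \ref{prop:rho_0_unramified}.

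Since $\rho_\bullet=0$ in family (II), the limit ILS space is then of the expected dimension $0$ and reduced at $\underline{W_0}$. The degeneracy-locus description \cite[\S 4]{chz} relativized over the smoothing family, as in Eisenbud-Harris \cite{eh_lls}, has every component of dimension at least $\rho_\bullet=0$ over the base. Hence $\underline{W_0}$ lifts to a section over the base, and the nearby ILS $\uw$ on $C$ is isolated. Reducedness of the special fiber gives reducedness, hence smoothness of dimension $0$, of the general fiber.

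For base-point-freeness, I will use the observation in Proposition \ref{prop:serre_II_positive}(a): it suffices to rule out base points of $V$ and of $W$ along $D$. Proposition \ref{prop:pointed_rho_zero}, applied on each of $Y$ and $Z$ with $s=-1$ and with the given $s$, shows that $W_Y,W_Z,V_Y,V_Z$ have no base points away from $p$. A base point of $V$ or $W$ on $C$ would therefore have to specialize to the node. The refinedness $a^Y_j+a^Z_j=d$ (respectively $d-k$ for $V$) excludes this, by the same semistable-reduction argument as in the footnote to Proposition \ref{prop:pointed_rho_zero}.

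The main obstacle is the $Y$-side rigidity. The interaction between $D_Y$ and $V_Y$ is exactly where the inequality $k\le 2s+1$ enters Farkas's construction, and I would need to check that this interaction introduces no first-order deformations.
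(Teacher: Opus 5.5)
Your overall frame is sound: rigidity of the $Z$-aspect from pointed $\rho=0$ transversality, isolatedness of the limit ILS giving a reduced $0$-dimensional smoothing, and base-point-freeness away from the node plus the node argument. The gap is that the heart of the proof is precisely the $Y$-side rigidity, which you flag as ``the main obstacle'' and leave open. Your description of the $Y$-side is also inaccurate in a way that matters.

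The $Y$-aspect $W_Y$ of $W_0$ is \emph{not} a pointed series with Brill--Noether number $0$ in general. Since $\mathfrak{l}_Z$ has adjusted number $0$, $W_Y$ has adjusted number $\rho=(r-s-1)k-(r-s)(s+1)$, which can be positive. For example, $(r,s,k)=(5,2,5)$ lies in family (II) with $k\le 2s+1$, $g=13$ and $d=16$. There $W_Y$ is a $g^5_{16}$ on the genus-$5$ curve $Y$ with vanishing orders $5,7,8,10,12,13$ at $p$, so its adjusted number is $1$. Hence you cannot apply either the reducedness of pointed $\rho=0$ loci or Proposition \ref{prop:pointed_rho_zero} to $W_Y$ directly, whether for rigidity or for base-point-freeness. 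Moreover, $V_Y\subset W_Y$ consists of the sections with the \emph{lowest} vanishing orders at $p$ ($5,7,8$ in the example), not the highest.

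The missing idea is the structure of Farkas's $Y$-aspect. It is $W_Y=W_{\mathcal{A}}(D)\oplus W_{\cL}((d-k-s)p)$ with $V_Y=W_{\mathcal{A}}$. Here $(\mathcal{A},W_{\mathcal{A}})\in G^{r-s-1}_{d-k}(Y)$ and $(\cL,W_{\cL})\in G^{s}_{k+s}(Y)$ are each reduced, isolated pointed $\rho=0$ series at $p$. The hypothesis $k\le 2s+1$ is used exactly to arrange $H^0(Y,\cL\otimes\mathcal{A}^{\vee}((d-k-s-1)p))=0$. By Riemann--Roch on the genus-$k$ curve $Y$, this makes $D$ the unique effective divisor in $|\cL\otimes\mathcal{A}^{\vee}((d-k-s)p)|$, and $D$ avoids $p$.

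Rigidity then follows from a first-order argument over $S=\mathrm{Spec}(\bK[\epsilon]/(\epsilon^2))$:
\begin{itemize}
\item The $Y$-aspect of $V_0'$ and the $Z$-aspect of $W_0'$ cannot move.
\item Evaluate the $Y$-aspect of $W_0'$ on $(d-k-s)p$. By semicontinuity this map has rank exactly $r-s$, so its kernel is locally free of rank $s+1$.
\item Twisting that kernel by $-(d-k-s)p$ gives an $S$-point of the reduced $0$-dimensional space $G^s_{k+s}(Y,\beta)$, which is therefore constant.
\item The $H^0$-vanishing persists over $S$, so $D$ does not move.
\item Hence the $Y$-aspect of $W_0'$ is constant, and the $Z$-aspect of $V_0'$ is cut out of the fixed $Z$-aspect of $W_0'$ by its vanishing order at $p$.
\end{itemize}
Base-point-freeness on $Y$ should likewise come from Proposition \ref{prop:pointed_rho_zero} with $s=-1$ applied to $W_{\mathcal{A}}$ and to $W_{\cL}$, using that $D$ avoids $p$, rather than to $W_Y$.
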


\begin{proof}
We first recall the needed aspects of Farkas's construction. The curve $C_0$ is given by the nodal union of smooth components $Y,Z$ of genus $k$ and $g-k$, respectively, intersecting at $p$. The pointed curves $(Y,p)$ and $(Z,p)$ are assumed to be general. The limit ILS $\underline{W_0}$ is built from three pieces:
\begin{itemize}
\item a linear series $(\mathcal{A},W_{\mathcal{A}}) \in G^{r-s-1}_{d-k}(Y)$, with \emph{ramification}\footnote{While the results above involving limit linear series on chains of elliptic curves were more easily stated in terms of \emph{vanishing} sequences $a_i=\alpha_i+i$, here we match Farkas's convention to index instead the ramification sequences.} sequence $\alpha=(\alpha_0\leq\cdots\leq\alpha_{r-s-1})$ at $p$, where $\alpha_{r-s-1}-\alpha_0 \leq 1$
\item a linear series $(\cL,W_{\cL}) \in G^s_{k+s}(Y)$, with ramification sequence $\beta=(  \beta_0\leq\cdots\leq\beta_s)$ at $p$, where $\beta_s - \beta_0 \leq 1$, and
\item a linear series $\mathfrak{l}_Z \in G^r_d(Z)$, with ramification sequence
\begin{equation}\label{eqn:ram-seq-l}
\gamma: \quad k-\beta_s \leq \cdots \leq k-\beta_0 \leq d-r-\alpha_{r-s-1} \leq \cdots \leq d-r-\alpha_0
\end{equation}
at $p$.
\end{itemize}

The integers $\alpha_i$ and $\beta_i$ are chosen in the unique possible way such that each of the linear series $W_{\mathcal{A}}$, $W_{\cL}$, and $\mathfrak{l}_Z$, together with their ramification data at $p$, defines a smooth, $0$-dimensional point in its respective moduli space of pointed linear series.

Farkas shows that, under the assumption $k \leq 2s+1$, \footnote{The assumption $k \leq 2s+1$ here is equivalent to condition (i) ($2f \leq e-1$) in \cite[Theorem 0.5]{farkas}.} one can furthermore arrange for
\begin{equation}\label{eqn: H0-vanishing-Gabi}
H^0(Y,\cL\otimes \mathcal{A}^{\vee}\otimes \cO_Y((d-k-s-1)p))=0.
\end{equation}
Therefore, by Riemann-Roch, the degree $k=g(Y)$ line bundle $\cL\otimes \mathcal{A}^{\vee}\otimes \cO_Y((d-k-s)p)$ on $Y$ has a \emph{unique} effective divisor $D$ in its linear system, disjoint from $p$.

From here, $(\mathcal{A}(D), W_{\mathcal{A}}(D) \oplus W_{\cL}((d-k-s)p))$\footnote{One needs that $d \geq k+s$, which holds in terminal family (II). } is a rank $r$, degree $d$ linear series on $Y$, and together with $\mathfrak{l}_Z$, gives a refined limit linear series $W_0$ on $C_0$. Now, write $V_Z$ for the $(r-s)$-dimensional subspace of sections of $\mathfrak{l}_Z$ with ramification sequence
\[
d-r-\alpha_{r-s-1} \leq \cdots \leq d-r-\alpha_0.
\]
Then, $(\mathcal{A},W_{\mathcal{A}})$ and $V_Z(-kp)$ together define a refined sub-limit linear series $V_0 \subset W_0$ of rank $r-s-1$ and degree $d-k$, vanishing along $D \subset Y$. 

We claim that the data of $V_0,W_0,D$ define an isolated reduced point of the moduli space of limit ILS. To see this, consider a first order deformation $(D', \mathcal{M}, V_0' \subseteq W_0)$ of $(D, V_0 \subseteq W_0)$ over $S= \mathrm{Spec}( k[\epsilon]/(\epsilon^2))$. The curve $Y \cup_p Z$ is constant over $S$, and $V_0', W_0'$ are both refined limit linear series. The $Y$-aspect of $V_0'$ and the $Z$-aspect of $W_0'$ cannot deform, because $G^{r-s-1}_{d-k}(Y,\alpha)$ and $G_d^r(Z,\gamma)$ are 0-dimensional and reduced. By semicontinuity, the evaluation map
\[
\bigg( Y-\text{aspect of } W_0' \bigg) \to H^0((d-k-s)p,  \mathcal{M}|_{(d-k-s)p})
\]
has rank exactly $r-s$, so its kernel $K_S$ is locally free of rank $s+1$. Since $G^{s}_{k+s}(Y,\beta)$ is reduced and $0$-dimensional, the $S$-point $K_S(-(d-k-s)p) \in G^{s}_{k+s}(Y,\beta) (S) $ is constant and equal to $W_{\mathcal{L}}$. 
Also, by semicontinuity, \eqref{eqn: H0-vanishing-Gabi} holds over $S$, so $D$ does not move. Because $W_{\mathcal{A}}(D) $ and $W_{\mathcal{L}}((d-k-s)p) $ are constant over $S$, the $Y$-aspect of $W_0'$ is constant. Finally, the $Z$-aspect of $V_0'$ coincides with the intersection of the $Z$-aspect of $W_0'$ vanishing to order at least $d-r-\alpha_{r-s-1}+s+1$ at $p$, which is also fixed.

Finally, it suffices to check that the respective aspects of $V_0,W_0$ have no base points away from $p$. In the case of $V_0$, this follows from Proposition \ref{prop:pointed_rho_zero}, applied to $(\mathcal{A},W_{\mathcal{A}})$ and $V_Z \subset \mathfrak{l}_Z$. In the case of $W_0$, it again follows from Proposition \ref{prop:pointed_rho_zero}, applied to $(\cL,W_{\cL})$ and all of $\mathfrak{l}_Z$.
\end{proof}

\bibliographystyle{alpha} 
\bibliography{existence_secant.bib}

\end{document}